\newtheorem{thm}{Theorem}[section]
\newtheorem*{thm*}{Theorem}
\newtheorem{lemma}[thm]{Lemma}
\newtheorem{corollary}[thm]{Corollary}
\newtheorem{prop}[thm]{Proposition}
\numberwithin{equation}{section}
\title{Markov random fields, Markov cocycles and the 3-colored chessboard }
\author{
Nishant Chandgotia
\and
Tom Meyerovitch
}\address{
Department of Mathematics\\ University of British Columbia,Canada}
\email {nishant@math.ubc.ca}
\address{
Department of Mathematics\\
Ben-Gurion University of the Negev,
Israel }
\email{mtom@math.bgu.ac.il}
\subjclass[2010]{Primary 37A60; Secondary 60J99}
\keywords{Markov random field, Gibbs state, nearest neighbour interaction, shift-invariance, multidimensional, cocycle, pivot property}
\def\F{{\mathcal F}}
\def\A{{\mathcal A}}
\def\N{\mathbb N}
\def \B{\mathcal B}
\def \E{\mathbb E}
\def \R{\mathbb R}
\def \G{\mathcal G}
\def \E1{\mathcal E}
\def \M{\mathcal M}
\def\p{\prime}
\newcommand{\Ht}{\mathit{Ht}^{(d)}}
\newcommand{\Z}{\mathbb{Z}}
\newcommand{\ZD}{\mathbb{Z}^{d}}
\newcommand{\DHt}{\mathit{grad}}
\begin{document}
\Large
\maketitle
\begin{abstract}
The well-known Hammersley-Clifford Theorem states (under certain conditions) that any Markov random field is a Gibbs state for a nearest neighbor interaction. In this paper we study Markov random fields for which the proof of the Hammersley-Clifford Theorem does not apply. Following Petersen and Schmidt we utilize the formalism of cocycles for the homoclinic equivalence relation and introduce ``Markov cocycles", reparametrizations of Markov specifications. The main part of this paper exploits this to deduce the conclusion of the Hammersley-Clifford Theorem for a family of Markov random fields which are outside the theorem's purview where the underlying graph is $\Z^d$. This family includes all Markov random fields whose support is the d-dimensional ``$3$-colored chessboard". On the other extreme, we construct a family of shift-invariant Markov random fields which are not given by any finite range shift-invariant interaction.
\end{abstract}
\section{Introduction}
A Markov random field is a collection of random variables $(x_v)_{v \in V}$ taking values in a set $\A$ and indexed by the vertices of an undirected graph $G=(V,E)$,
satisfying the following conditional independence property: Fix any two finite subsets of vertices $A,B \subset V$ having no edges between them.
Then the corresponding random variables $(x_v)_{v \in A}$ and $(x_v)_{v \in B}$ are independent given $(x_v)_{v \in V \setminus (A \cup B)}$.

The Hammersley-Clifford Theorem states that under certain assumptions any Markov random field is a so-called \emph{Gibbs state} for a \emph{nearest neighbor interaction} (we recall these definitions in Section \ref{section: Markov _spec_and_cocyc}).
The key assumption in the proof of the Hammersley-Clifford Theorem is the existence of a so-called ``safe symbol'' (see Subsection \ref{safe hamcliff} below).
In this paper, we study Markov random fields outside the purview of the safe symbol assumption.


In general the conclusion of the Hammersley-Clifford Theorem does not hold if we drop the safe symbol assumption: J. Moussouris provided examples of Markov random fields on a finite graph which are not Gibbs states for a nearest neighbor interaction \cite{mouss}.
Are there weaker conditions on a Markov random field which guarantee a Hammersley-Clifford-type theorem? In the absence of a nearest neighbor interaction, are there alternative ``natural parametrizations'' for Markov random fields?
Some work along these lines has been carried out in the first-named author's M.Sc. thesis \cite{Chandgotia}. The current paper extends this investigation.

There has been other work along this direction: When the underlying graph is finite, there are algebraic conditions on the support \cite{sturmfel} and conditions on the graph \cite{laur} which guarantee the conclusions of the Hammersley-Clifford Theorem. When the underlying graph is a Cayley graph of $\mathbb{Z}$ the Markov random field is shift-invariant and $\mathcal{A}$ is finite, the conclusions of the Hammersley-Clifford Theorem hold without any further assumptions \cite{Markovfieldchain}. Furthermore, in that setting any Markov random field is a stationary Markov chain.
Even when the underlying graph is a Cayley graph of $\mathbb{Z}$, this conclusion can fail for countable $\mathcal{A}$ (Theorem 11.33 in \cite{Georgii}), or if we drop the assumption of shift-invariance \cite{dob}.
In the same setting, certain mixing conditions guarantee the conclusion even when $\mathcal{A}$ is a general measure space rather than a finite set (Theorems 10.25 and 10.35 in \cite{Georgii}). When the underlying graph is the standard Cayley graph of $\mathbb{Z}^2$ the conclusion fails even in the shift-invariant and the finite $\A$ case (Chapter 5 in \cite{Chandgotia} and Section \ref{section: nonGibb} of this paper).

The Markov random fields which we consider consist of random variables taking values in a finite set $\mathcal{A}$. For most of this paper we focus on Markov random fields for which the graph $G=(V,E)$ is the Cayley graph of $\mathbb{Z}^d$ with respect to the standard set of generators. The natural action of $\mathbb{Z}^d$ on this graph induces an action on $\mathcal{A}^{\mathbb{Z}^d}$ by translations or \emph{shifts}. We study in particular Markov random fields which are invariant to the shift action. Evidently, the support of any such Markov random field is a \emph{shift space}, that is a shift-invariant compact subset of $\mathcal{A}^{\mathbb{Z}^d}$.

We give a short overview of the paper and our main results:

A major portion of this paper establishes the conclusion of the Hammersley-Clifford Theorem for Markov random fields whose support is in a certain one-parameter family of shift spaces $X_r$, where $r \ne 1,4$ is a positive integer. We prove that any Markov random field whose support is $X_r$ is a Gibbs state for a nearest neighbor interaction (Proposition \ref{prop:X_r_Markov_is_non_stationary_gibbs}). Furthermore, the interaction can be chosen shift-invariant if the concerned Markov random field is shift-invariant (Theorem \ref{thm:MRF_on_X_r_is_Gibbs}). For $r \ne 1,4$, the space $X_r \subset \{0,\ldots,r-1\}^{\mathbb{Z}^d}$ consists of all those $x\in \{0,\ldots,r-1\}^{\mathbb{Z}^d}$ for which $x_n -x_m = \pm 1 \mod r$ whenever $\|n-m\|_1=1$. For $r=2$ this space consists of $2$ periodic ``chessboard'' configurations and for $r=3$ this is the $d$-dimensional ``$3$-colored chessboard'': the set of proper graph $3$-colorings of the standard Cayley graph of $\mathbb{Z}^d$.

In Section \ref{section: Markov _spec_and_cocyc} we introduce ``Markov cocycles'' and ``Gibbs cocycles'', a rather general formalism allowing to parameterize Markov specifications. Following \cite{petersen_schmidt1997}, these are logarithms of the Radon-Nikodym derivatives with respect to the homoclinic relation, that is, logarithms of the ratio of probabilities of configurations which differ at only finitely many sites. Markov cocycles form a vector space, the subspace of Gibbs cocycles with shift-invariant interactions being finite dimensional. We formulate a strong version of the Hammersley-Clifford Theorem in terms of these objects (Theorem \ref{thm: strong_hammersley_clifford}). A weaker condition on the support of a Markov random field than that of a ``safe symbol'', namely, the ``pivot property'' suffices to deduce that the space of shift-invariant Markov cocycles itself is already finite dimensional (Proposition \ref{finite pivot dimension}). For $r\in \N \setminus \{1, 2, 4\}$, we prove that the dimension of the shift-invariant Markov cocycles on $X_{r}$ is $r$ while the dimension of Gibbs cocycles on $X_r$ with a shift-invariant interaction is $r-1$ (Propositions \ref{prop:X_r_markov_cocycles} and \ref{prop:Gibbs_cocycles_X_r}). However if we do not demand shift-invariance for the interaction, every Markov cocycle on $X_r$ is Gibbs. Theorem \ref{thm:MRF_on_X_r_is_Gibbs} which is our main result, asserts that the conclusion of a weak version of the Hammersley-Clifford Theorem (Theorem \ref{hamcliff}) holds for $X_r$. The conclusion of a stronger version (Theorem \ref{thm: strong_hammersley_clifford}) holds as well (Proposition \ref{prop:X_r_Markov_is_non_stationary_gibbs}), but not in the shift-invariant setting (Corollary \ref{cor:no_invaraint_interaction}).

In Section \ref{section: nonGibb} we construct a subshift which does not have the pivot property and for which the space of shift-invariant Markov cocycles has uncountable dimension.

\section{Background and Notation}\label{section: Background}
This section will recall the necessary concepts and introduce the basic notation.

\subsection{Markov Random Fields and topological Markov fields}\label{Subsection: MRF and TMF}
Let $G=(V,E)$ be a simple, undirected graph where the vertex set $V$ is finite or countable. We always assume that $G$ is \emph{locally finite}, meaning all $v \in V$ have a finite number of neighbours.
The \emph{boundary} of a set of vertices $F \subset V$, denoted by $\partial F$, is the set of vertices outside $F$ which are adjacent to $F$:
$$\partial F := \left\{ v \in V \setminus F ~:~ \exists w \in F \mbox{ s.t. } (v,w) \in E\right\}.$$

\textbf{Remark:} Observe that in our notation $\partial F \subset F^c$. This is sometimes called the \emph{outer boundary} of the set $F$. Consistent with our notation the \emph{inner boundary} of $F$ is $\partial (F^c)$.

Given a finite set $\mathcal{A}$, 
the space $\A^V$ is a compact topological space with respect to the product topology, where the topology on $\A$ is discrete. For $F \subset V$ finite and $a \in \mathcal{A}^F$, we denote by $[a]_F$ the \emph{cylinder set}$$ [a]_F := \left\{ x \in \mathcal{A}^V~:~ x|_F =a\right\}.$$
For $x \in \mathcal{A}^V$ we use the notation $[x]_F$ for $[x|_F]_F$.
The collection of cylinder sets generates the Borel $\sigma$-algebra on $\mathcal{A}^V$.

A \emph{Markov random field} is a Borel probability measure $\mu$ on $\mathcal{A}^V$ with the property that
for all finite $A, B \subset V$ such that $\partial A \subset B \subset A^{c}$ and $a \in \A^A, b \in \A^B$ satisfying $\mu([b]_B)>0$
\begin{equation*}
\mu\left([a]_A\;\Big\vert\;[b]_B\right)= \mu\left([a]_A\;\Big\vert\;[b|_{\partial A}]_{ \partial A}\right).
\end{equation*}

We say that the sets of vertices $A, B \subset V$ are \emph{separated} (in the graph $G$) if $(v,w) \not\in E$ whenever $v \in A$ and $w \in B$.

Here is an equivalent definition of a Markov random field: If $x$ is a point chosen randomly according to the measure $\mu$, and $A, B \subset V$ are finite and separated, then conditioned on $x|_{V \setminus (A \cup B)}$, $x|_A$ and $x|_B$ are independent random variables.

A Markov random field is called \emph{global} if the conditional independence above holds for all separated sets $A, B \subset V$ (finite or not) .

As in \cite{Chandgotia,Markovfieldchain}, a \emph{topological Markov field} is a compact set $X \subset \A^V$ such that for all finite $F \subset V$ and $x,y\in X$ satisfying $x|_{\partial F}=y|_{\partial F}$ , there exist $z\in X$ satisfying
\begin{eqnarray*}
z_v:=\begin{cases}x_v \text{ for } v \in F\\ y_v\text{ for }v \in V \setminus F.\end{cases}
\end{eqnarray*}

A topological Markov field is called \emph{global} if we do not demand that $F$ be finite. Topological Markov fields which are not global do exist, see  \cite[Chapter $5$]{Chandgotia}.

The \emph{support} of a Borel probability measure $\mu$ on $\mathcal{A}^V$ denoted by $supp(\mu)$ is the intersection of all closed sets $Y \subset \mathcal{A}^{V}$ for which $\mu(Y)= 1$. Equivalently,
$$supp(\mu) = \A^V \setminus \bigcup_{[a]_A \in \mathcal{N}(\mu)}[a]_A,$$
where $\mathcal{N}(\mu)$ is the collection of all cylinder sets $[a]_A$ with
$\mu([a]_A)=0$.
The support of a Markov random field is a topological Markov field (see Lemma $2.0.1$ in \cite{Chandgotia}). 

\subsection{The homoclinic equivalence relation of a TMF and adapted MRFs.}
Following \cite{petersen_schmidt1997, schmidt_invaraint_cocycles_1997}, we denote by $\Delta_X$ the \emph{homoclinic equivalence relation} of a TMF $X \subset \A^{V}$, which is given by
\begin{equation}\label{eq:Homoclinic_relation_def}
\Delta_X := \{(x,y)\in X\times X\;|\; x_n=y_n \text{ for all but finitely many }n\in V\}.
\end{equation}

We say that a Markov random field $\mu$ is \emph{adapted} with respect to a topological Markov field $X$ if
$supp(\mu)\subset X$ and
\begin{equation*}
x\in supp(\mu) \Longrightarrow \{y\in X~:~ (x,y)\in \Delta_X\}\subset supp(\mu).
\end{equation*}

To readers familiar with the theory of non-singular equivalence relations, we remark that a Markov random field $\mu$ is adapted to $X$ if and only if the measure $\mu$ is non-singular with respect to $\Delta_X$. See \cite{Meyerovitch_Gibbs} and references within for more on measures which are non-singular with respect to the homoclinic equivalence relation.

To illustrate this definition, if $supp(\mu) = X$ then $\mu$ is adapted with respect to $X$, and if $X=X_1 \cup X_2$ is the union of two
topological Markov fields over disjoint alphabets and $\mu$ is a Markov random field with $supp(\mu)=X_1$ then $\mu$ is adapted with respect to $X$. On the other hand, the Bernoulli measure $(\frac{1}{2}\delta_0+\frac{1}{2}\delta_1)^{V}$ is \emph{not} adapted with respect to $\{0,1,2\}^{V}$.
In fact, if $X= \A^{V}$ for some finite alphabet $\A$ then any Markov random field which is adapted to $X$ has $supp(\mu)=X$. 

\subsection{$\mathbb{Z}^d$-shift spaces and shifts of finite type}\label{subsection: SFT}
For the Markov random fields we discuss in this paper, the set of vertices of the underlying graph is the $d$-dimensional integer lattice. We identify $\mathbb{Z}^d$ with the set of vertices of the Cayley graph with respect to the standard generators. Rephrasing, $n,m \in \mathbb{Z}^d$ are adjacent iff $\|n -m\|_1 =1$, where
for all $n=(n_1,\ldots,n_d) \in \Z^d$, $||n||_1=\sum_{r=1}^d |n_r|$ denotes
the $l^1$ norm of $n$.
The $boundary$ of a given finite set $F\subset \Z^d$ is thus given by:
\begin{equation*}
\partial F:=\left\{m\in F^c ~:~ ||n-m||_1= 1\text{ for some }n\in F\right\}.
\end{equation*}

On the compact space $\mathcal{A}^{\Z^d}$ (with the product topology over the discrete set $\A$) the maps $\sigma_n:\A^{\Z^d} \to \A^{\Z^d}$ given by
\begin{equation*}
(\sigma_n(x))_m:=x_{m+n} \text{ for all }m,n \in \Z^d
\end{equation*}
define a $\mathbb{Z}^d$-action by homeomorphisms, called the \emph{shift-action}. The pair $(\A^{\Z^d},\sigma)$ is a topological dynamical system called the \emph{d-dimensional full shift} on the alphabet $\A$. Note that $\sigma$ acts on the Cayley graph of $\mathbb{Z}^d$ by graph isomorphisms.

A $\mathbb{Z}^d$-\emph{shift space} or \emph{subshift} is a dynamical system $(X,\sigma)$ where $X\subset \A^{\Z^d}$ is closed and invariant under the map $\sigma_n$ for each $n \in \mathbb{Z}^d$.

A Borel probability measure $\mu$ on $\mathcal{A}^{\mathbb{Z}^d}$ is \emph{shift-invariant}
if $\mu\circ \sigma_n = \mu$ for all $n \in \mathbb{Z}^d$.
It follows that the support of all shift-invariant measures $\mu$ is a subshift.

For $X \subset \A^V$ and $W\subset V$ let
\begin{equation*}
\B_W(X):= \{w\in \A^{W}\;|\;\text{there exists } x\in X \text{ such that } x|_{W}=w\}.
\end{equation*}
The \emph{language }of a set $X \subset \A^{V}$ denoted by $\B(X)$ is defined as all finite configurations which occur in the elements of $X$:
\begin{equation*}
\B(X):= \bigcup_{W\subset V\text{ finite}}\B_W(X).
\end{equation*}

If $A,B \subset V$ and $x \in \A^A, y \in \A^B$ such that $x|_{A\cap B}= y|_{A\cap B}$ then $x \vee y \in \A^{A \cup B}$ is given by
$$ (x \vee y)_n := \begin{cases}
x_n & n \in A\\
y_n & n \in B.
\end{cases}$$

An alternative equivalent definition for a subshift is given by \emph {forbidden patterns} as follows:
Let
\begin{equation*}
\A^\star:= \bigcup_{W\subset \Z^d\text{ finite}}\A^W.
\end{equation*}
For all $\F \subset \A^\star$ let
\begin{equation*}
X_\F=\{x \in \A^{\Z^d}| \text{ no translate of a subconfiguration of } x\text{ belongs to }\F \}.
\end{equation*}
It is well-known that a subset $X\subset \A^{\Z^d}$ is a subshift if and only if there exists $\F \subset \A^\star$ such that $X=X_\F$. The set $\F$ is called the set of forbidden patterns for $X$. A subshift $X$ is called a\emph{ shift of finite type }if $X=X_\F$ for some finite set $\F$. A shift of finite type is called a \emph{nearest neighbor shift of finite type} if $X= X_\F$ where $\F$ consists of nearest neighbor constraints, i.e. $\F$ consists of patterns on single edges. When $d=1$ nearest neighbor shifts of finite type are also called \emph{topological Markov chains}. In fact the study of nearest neighbor shifts of finite type has been partly motivated by the fact that the support of stationary Markov chains are one-dimensional nearest neighbor shifts of finite type.

Every nearest neighbor $\mathbb{Z}^d$-shift of finite type is a shift-invariant topological Markov field. When $d=1$ the converse is also true under the assumption that the subshift is non-wandering \cite{Markovfieldchain}. Without the non-wandering assumption, one-dimensional shift-invariant topological Markov fields are still so-called sofic shifts, but not necessarily of finite type \cite{Markovfieldchain}. This does not hold in higher dimensions (\cite{Chandgotia} and Section \ref{section: nonGibb}).


\subsection{Gibbs states with nearest neighbor interactions}\label{Subsection: Gibbs States}

For a graph $G=(V,E)$ and $A\subset V$, let $diam(A)$ denote the diameter of $A$ with respect to the graph distance (denoted by $d$) in $G$, that is, $diam(A)=\max_{i,j\in A}d(i,j)$.

Following \cite{Rue}, an \emph{interaction} on $X$ is a function $\phi$ from $\B(X)$ to $\R$, satisfying certain summability conditions. Here we will only consider finite range interactions, for which the summability conditions are automatically satisfied.

An interaction is of \emph{range at most $k$} if
$\phi(a)=0$ for $a\in \B_A(X)$ whenever $diam(A)> k$.
We will call an interaction of range $1$ a \emph{nearest neighbor interaction}.
When $G= \mathbb{Z}^d$, an interaction $\phi$ is shift-invariant if for all $n \in \mathbb{Z}^d$ and $a\in \B(X)$, $\phi(a)= \phi(\sigma_n(a))$.
Since the standard Cayley graph of $\Z^d$ has no triangles, a shift-invariant nearest neighbor interaction is uniquely determined by its values on configurations on $\{0\}$ (``single site potentials'') and on configurations on pairs $\{0,e_i\}$, $i=1,\ldots,d$ (``edge interactions''). We denote these by $\phi([a]_0)$ and $\phi([a,b]_i)$ respectively where $a,b \in \A$.

A \emph{Gibbs state with a nearest neighbor interaction $\phi$} is a Markov random field $\mu$ such that for all $x\in supp(\mu)$ and $A, B \subset V$ finite satisfying $\partial A \subset B \subset A^{c}$,
\begin{eqnarray*}
\mu\left([x]_A\;\Big\vert\;[x]_B\right)&=&
\displaystyle{\frac{\prod\limits_{C \subset A \cup \partial A}e^{\phi(x|_C)}}{Z_{A,x|_{\partial A}}} }
\end{eqnarray*}
where $Z_{A,x|_{\partial A}}$ is the uniquely determined normalizing factor so that $\mu(X)=1$. 

\section{Markov specifications and Markov cocycles}\label{section: Markov _spec_and_cocyc}

Any Markov random field $\mu$ yields conditional probabilities of the form $\mu( x_F = \cdot \mid x_{\partial F} = \delta)$ for all finite $F \subset V$ and \emph{admissible} $\delta \in \mathcal{A}^{\partial F}$ (by admissible we mean $\mu([\delta]_{\partial F} )>0$). We refer to such a collection of conditional probabilities as the \emph{Markov specification} associated with $\mu$.
It may happen that two distinct Markov random fields have the same specification, as in the case of the $2$-dimensional Ising model in low temperature \cite{onsager_two_dimension_model_order}. In general it is a subtle and challenging problem to determine if a given Markov specification admits more than one Markov random field (the problem of uniqueness for the measure of maximal entropy of a $\mathbb{Z}^d$-shift of finite type is an instance of this problem \cite{burtonsteiffnonuniquesft}).
For the purpose of our study and statement of our results, it would be convenient to have an intrinsic definition for a Markov specification, not involving a particular underlying Markov random field.

Let $X \subset \mathcal{A}^{V}$ be a topological Markov field.
A \emph{Markov specification} on $X$ is an assignment for each
finite and non-empty $F \subset V$ and $x \in \mathcal{B}_{\partial F}(X)$
of a probability measure $\Theta_{F,x}$ on $\mathcal{B}_F(X)$ satisfying the following conditions:
\begin{enumerate}
\item{\textbf{Support condition}: For all finite and non-empty $F \subset V$, $x \in \mathcal{B}_{\partial F}(X)$ and $y \in \mathcal{B}_{F}(X)$, $x \vee y \in \mathcal{B}_{F \cup \partial F}(X)$ if and only if $\Theta_{F,x}(y)>0$. This condition can be written as follows:
$$\mathit{supp}(\Theta_{F,x}) =\{y \in \mathcal{B}_{F}(X)~:~ x \vee y \in \mathcal{B}_{F \cup \partial F}(X)\}.$$}
\item{\textbf{Markovian condition}: For all finite and non-empty $F \subset V$ and $x \in \mathcal{B}_{\partial F}(X)$, $\Theta_{F,x}$ is a Markov random field on the finite graph induced from $V$ on $F$.}
\item{\textbf{Consistency condition}: If $F \subset H \subset V$ are finite and non-empty, $x \in \mathcal{B}_{\partial F}(X)$, $y \in \mathcal{B}_{\partial H}(X)$ and $x_n=y_n$ for $n \in \partial F \cap \partial H$, then for all $z \in \mathcal{B}_F(X)$
$$ \Theta_{F,x}(z) = \frac{\Theta_{H,y}( [z \vee x]_{(F \cup \partial F)\cap H})}{\Theta_{H,y}([x]_{\partial F \cap H})}.$$}
\end{enumerate}
The definition above has been set up so that for any Markov random field $\mu$ with $X = \mathit{supp}(\mu)$, the assignment $$(F,x) \mapsto \Theta_{F,x}(a):=\mu([a]_F \mid [x]_{\partial F})$$
is a Markov specification. Conversely, given any Markov specification $\Theta$ on $X$ there exists a Markov random field $\mu$ on $X$ compatible with $\Theta$ in the sense that $\mu([a]_F \mid [y]_{\partial F}) =\Theta_{F,y}(a)$ for all $a\in \B_{F}(X)$ whenever $\mu([y]_{\partial F})>0$ (Chapter 4 in \cite{Georgii}). Furthermore, when $X \subset \A^{\ZD}$ is a subshift and the specification $\Theta$ is shift-invariant, it follows from amenability of $\mathbb{Z}^d$ that there exists a shift-invariant Markov random field $\mu$ compatible with $\Theta$.
However, in general it is possible that for a given specification $\Theta$ the support of any $\mu$ satisfying the above is a strict subset of $X$, in which case there exist certain finite $F \subset V$ and $y \in \mathcal{B}_{\partial F}(X)$ for which the conditional probabilities $\mu([x]_F \mid [y]_{\partial F})$ are meaningless for all $x\in X$. We will provide such examples in Section \ref{Section: MRF on 3cb gibbs}. In such a case, according to our definition, the Markov specification associated with $\mu$ is the restriction of $\Theta$ to the support of $\mu$, meaning the collection of conditional probabilities $\Theta_{F,x}$ for finite sets $F\subset V$ and $x\in \B_{\partial F}(supp(\mu))$.

It will be convenient for our purposes to re-parameterize the set of Markov specifications on a given topological Markov field $X$. For this purpose we use the formalism of $\Delta_X$-cocycles. To this well-known formalism we
introduce an ad-hoc definition of a \emph{Markov cocycle}, which we will explain now.
As in \cite{schmidt_invaraint_cocycles_1997}, a (real-valued) \emph{$\Delta_X$-cocycle} is a function $M: \Delta_X\longrightarrow\R$ satisfying
\begin{equation}
M(x,z)=M(x,y) + M(y,z) \mbox{ whenever } (x,y),(y,z) \in \Delta_X.
\end{equation}
We call $M$ a \emph{Markov cocycle} if it is a $\Delta_X$-cocycle and satisfies: For all $(x,y) \in \Delta_X$ such that $x|_{F^c}=y|_{F^c}$
the value $M(x,y)$ is determined by $x|_{F\cup \partial F}$ and $y|_{F \cup \partial F}$. If $G$ is the standard Cayley graph of $\Z^d$, a $\Delta_X$-cocycle is \emph{shift-invariant} if $M(x,y)=M(\sigma_n(x),\sigma_n(y))$ for all $n \in \mathbb{Z}^d$. 

There is a clear bijection between Markov cocycles and Markov specifications on $X$: If $\Theta$ is a Markov specification on $X$, the corresponding Markov cocycle is given by
$$M(x,y) := \log \left( \Theta_{F, y|_{ \partial F}}(y|_F) \right)-\log \left( \Theta_{F, x|_{ \partial F}}(x|_F) \right),$$
where $(x,y)\in \Delta_X$ and $F\subset V$ finite such that $x|_{F^c} = y|_{F^c}$.

Conversely, given a Markov cocycle $M$ on $X$, the corresponding specification $\Theta$ is given by

$$ \Theta_{F,a}(y) = \frac{1}{Z_{M,F,a,z}}e^{M( x \vee z,x \vee y)},$$
where $F \subset V$ is a finite set, $a \in \mathcal{B}_{\partial F}(X)$, $y,z \in \mathcal{B}_{F}(X)$ are such that $a \vee y, a \vee z \in \mathcal{B}_{F \cup \partial F}(X)$ and $x \in \mathcal{B}_{F^c}(X)$ with $x|_{\partial F}=a$. The normalizing constant $Z_{M,F,a,z}$ is given by:

$$Z_{M,F,a,z}= \sum_{ y' }e^{ M( x \vee z, x \vee y^\prime)},$$
where the sum is over all $y^\prime \in \mathcal{B}_{F}(X)$ such that $y^\prime \vee a \in \mathcal{B}_{F \cup \partial F}(X)$.
Note that the expression for the specification is well defined: Since $M$ is a Markov cocycle on the topological Markov field $X$ the right-hand side does not depend on the particular choice of $x$. The choice of the auxiliary variable $z$ on the right-hand side changes the normalizing constant $Z_{M,F,a,z}$, but does not change $\Theta_{F,a}$.

When $X \subset \A^{\ZD}$ is a subshift, this bijection maps shift-invariant specifications to shift-invariant Markov cocycles.
Thus, a shift-invariant Borel probability measure $\mu$ is a shift-invariant Markov random field if and only if $X=supp(\mu)$ is a topological Markov field and there exists a shift-invariant Markov cocycle $M$ on $X$ such that for all $(x,y)\in \Delta_{X}$
\begin{equation*}
\frac{\mu([y]_\Lambda)}{\mu([x]_\Lambda)}=e^{M(x,y)}
\end{equation*}
for all $\Lambda\supset F\cup \partial F$ where $F$ is the set of sites on which $x,y$ differ.

Fix a topological Markov field $X$ and a nearest neighbor interaction $\phi$ on $X$. The \emph{Gibbs cocycle} corresponding to $\phi$ is given by:
\begin{equation*}
M_{\phi}(x,y)=\sum_{W \subset V\text{ finite }} \phi(y|_{W})-\phi(x|_{W}).
\end{equation*}

Note that when $\phi$ is a nearest neighbor interaction, there are only finitely many non-zero terms in the sum whenever $(x,y) \in \Delta_X$ and so $M_{\phi}$ is well defined. Examination of the definitions verifies that under the above assumptions $M_{\phi}$ is a Markov cocycle. Our point of interest is the converse: When can a Markov cocycle be expressed in this form?

\textbf{Remark:} A Borel probability measure $\mu$ is a Gibbs state with nearest neighbor interaction $\phi$ if and only if its $\Delta_X$-Radon-Nikodym cocycle is $e^{M_{\phi}}$ (as in \cite{petersen_schmidt1997}) for the Gibbs cocycle $M_\phi$ on $X=supp(\mu)$. See Section $2$ of \cite{Meyerovitch_Gibbs} and references within.

Let $X$ be a topological Markov field. Denote by $\M_X$ the set of all Markov cocycles and by $\G_X$ the set of all nearest neighbor Gibbs cocycles. In case $X$ is also a subshift, we denote by $\M_X^\sigma$ the set of shift-invariant Markov cocycles and by $\G_X^\sigma$ the set of Gibbs cocycles corresponding to a shift-invariant nearest neighbor interaction.

The set $\M_X$ of Markov cocycles naturally carries a vector space structure: Given $M_1,M_2\in \M_X$ and $c_1,c_2\in \R$,
$
c_1M_1+c_2M_2\in \M_X
$.
The reader can easily verify that $\G_X$ is a linear subspace of $\M_X$.
Since shift-invariant nearest neighbor interactions constitute a finite-dimensional vector space, and the map sending a nearest neighbor interaction $\phi$ to the cocycle $M_\phi$ is linear, it follows that $\G_X^\sigma\subset \M_X$ has finite dimension.

For a topological Markov field $X$ defined over a finite graph $G=(V,E)$, $\mathcal{M}_X$ is finite dimensional; the problem of determining which Markov cocycles are Gibbs amounts to solving a finite (but possibly large) system of linear equations. The resulting equations are essentially the `balanced conditions' mentioned in \cite{mouss}.

\subsection{The ``safe symbol property'' and the Hammersley-Clifford Theorem}\label{safe hamcliff}
A topological Markov field $X\subset \A^{V}$ is said to have a \emph{safe symbol} if there exists an element $\star\in \A$ such that for all $x\in X$ and $A\subset V$, $y \in \A^{V}$ given by
\begin{eqnarray*}
y_n:=
\begin{cases}x_n\text{ for } n \in A\\
\star \ \ \text{ for } n \in A^c\end{cases}
\end{eqnarray*}
is also an element of $X$. 



A formulation of the Hammersley-Clifford Theorem states:

\begin{thm} \textbf{(Hammersley-Clifford, weak version \cite{HammersleyClifford71})}\label{hamcliff}
Let $X$ be a topological Markov field with a safe symbol. Then:
\begin{enumerate}
\item{Any Markov random field with $supp(\mu)=X$ is a Gibbs state for a nearest neighbor interaction.}
\item{\label{enum:stationary_HC} Further if $X$ is a subshift, any shift-invariant Markov random field with $supp(\mu)=X$ is a Gibbs state for a shift-invariant nearest neighbor interaction.}
\end{enumerate}
\end{thm}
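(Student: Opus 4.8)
The plan is to translate everything into the language of Markov cocycles introduced above and then reduce both assertions to a single combinatorial fact: in the presence of a safe symbol $\star$, every Markov cocycle on $X$ is a nearest neighbor Gibbs cocycle, i.e.\ $\M_X = \G_X$. Indeed, once this is known, part (1) follows because a Markov random field $\mu$ with $\mathrm{supp}(\mu) = X$ is determined (as far as its conditional structure is concerned) by its associated Markov cocycle $M$; writing $M = M_\phi$ for a nearest neighbor interaction $\phi$ exhibits $\mu$ as a Gibbs state for $\phi$ via the Remark following the definition of the Gibbs cocycle. For part (2), if $\mu$ is in addition shift-invariant then its Markov cocycle $M$ lies in $\M_X^\sigma$, and one checks that the interaction $\phi$ produced in the proof of the first part can be taken shift-invariant whenever $M$ is — alternatively, one averages $\phi$ over translates, using that $\mathbb{Z}^d$ acts on $X$ by graph isomorphisms and that $X$ has finite width in each direction only in the relevant sense; more cleanly, the explicit formula for $\phi$ below is manifestly equivariant.

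The heart of the matter is therefore the construction of $\phi$ from $M$. Fix a safe symbol $\star$ and, for $x \in X$, let $x^{(A)}$ denote the configuration agreeing with $x$ on $A$ and equal to $\star$ off $A$; by the safe symbol property $x^{(A)} \in X$ for every $A \subset \mathbb{Z}^d$, and in particular the all-$\star$ configuration $\star^\infty$ lies in $X$. For a finite configuration $a \in \B_A(X)$ that can be completed by $\star$'s to an element of $X$ (which, again by the safe symbol property, is automatic), set
\begin{equation*}
\phi(a) := \sum_{B \subset A} (-1)^{|A \setminus B|}\, M\bigl(\star^\infty,\ (a \vee \star^\infty)^{(B)}\bigr),
\end{equation*}
a Möbius-type inversion over the subsets of $A$. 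One then verifies, by a standard inclusion–exclusion argument, that $\sum_{W \subset V \text{ finite}} \phi(y|_W) - \phi(x|_W) = M(x,y)$ for all $(x,y) \in \Delta_X$, so that $M = M_\phi$. The key point where the safe symbol is used is that the cocycle identity $M(x,y) = M(x, \star^\infty) + M(\star^\infty, y)$ lets us reduce the computation of $M$ on any homoclinic pair to the ``anchored'' values $M(\star^\infty, z)$, and the safe symbol property guarantees that every relevant intermediate configuration $(a \vee \star^\infty)^{(B)}$ actually belongs to $X$, so that all terms are defined. Finally, one must check that $\phi$ is a genuine nearest neighbor interaction, i.e.\ $\phi(a) = 0$ whenever $\mathrm{diam}(A) > 1$: this is exactly where the Markov property of the cocycle $M$ enters — since $M(x,y)$ depends only on $x|_{F \cup \partial F}$ and $y|_{F \cup \partial F}$ where $F$ is the (finite) set of disagreement, one shows that for $A$ with two non-adjacent sites the alternating sum defining $\phi(a)$ telescopes to zero.

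The main obstacle, and the step deserving the most care, is this last vanishing claim: showing that the Möbius inversion of a Markov cocycle is supported on cliques of the graph. On the Cayley graph of $\mathbb{Z}^d$ the cliques are exactly the single vertices and single edges (there are no triangles), so one needs $\phi(a) = 0$ for $|A| \geq 2$ unless $A$ is an edge, and also for $A$ an edge after possibly absorbing the single-site terms; the argument is the classical Hammersley–Clifford counting, but one must be careful that it goes through with $X$ a general topological Markov field rather than a full shift — this is precisely what the support condition built into the definition of a Markov specification, together with the safe symbol property, is there to handle. The shift-invariance in part (2) is then a routine consequence: if $M(x,y) = M(\sigma_n x, \sigma_n y)$ for all $n$, the displayed formula for $\phi$ gives $\phi(\sigma_n a) = \phi(a)$ immediately, since $\sigma_n$ maps $\star^\infty$ to itself and commutes with the operations $(\cdot)^{(B)}$ and $\vee$.
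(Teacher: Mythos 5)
Your strategy --- reduce to the strong statement $\M_X = \G_X$ via M\"obius inversion over subsets and then read off shift-equivariance from the explicit formula --- is the one the paper points to (it cites \cite{HammersleyClifford71} for part (1), \cite{Chandgotia} for the equivariance, and packages the conclusion as Theorem \ref{thm: strong_hammersley_clifford} combined with Proposition \ref{prop:safe_symbol_MRF_full_support}). However, there is a genuine gap in the verification that $M_\phi = M$. You invoke the cocycle identity $M(x,y)=M(x,\star^\infty)+M(\star^\infty,y)$ to anchor the computation at the all-$\star$ configuration, but this identity is not available: the cocycle $M$ is defined only on $\Delta_X$, and a generic $x\in X$ differs from $\star^\infty$ at infinitely many sites, so $M(x,\star^\infty)$ has no meaning. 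Your M\"obius formula for $\phi$ itself is sound (the configurations $(a\vee\star^\infty)^{(B)}$ are all homoclinic to $\star^\infty$), but the step from ``the inclusion--exclusion identity holds for pairs homoclinic to $\star^\infty$'' to ``$M_\phi=M$ on all of $\Delta_X$'' is asserted, not proved.

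The missing ingredient is a localization via the Markov property, which you have available but do not deploy at this step. Since $M$ and $M_\phi$ are both Markov cocycles, so is $N:=M-M_\phi$, and the safe symbol property shows that any $(x,y)\in\Delta_X$ is connected by a chain of single-site pivot moves (push $x|_F$ down to $\star$ one site at a time, then build up to $y|_F$; all intermediate configurations lie in $X$). It therefore suffices to show $N$ vanishes on pivot pairs $(x,x')$ differing only at a site $v$. By the Markov property $N(x,x')$ depends only on $x|_{\{v\}\cup\partial\{v\}}$ and $x'|_{\{v\}\cup\partial\{v\}}$; replace $x,x'$ by the configurations $z,z'$ agreeing with them on $\{v\}\cup\partial\{v\}$ and equal to $\star$ elsewhere, which lie in $X$ by the safe symbol property and \emph{are} homoclinic to $\star^\infty$. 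Your anchored computation gives $N(z,z')=0$, hence $N(x,x')=0$, hence $N\equiv 0$. With this patch in place, the remaining pieces --- vanishing of $\phi$ off cliques by pairing subsets across a non-adjacent pair of sites in $A$, and equivariance of the formula under the shift --- are correct and complete the proof.
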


The second statement in the theorem above is not a part of the original formulation, but does follow since there is an explicit algorithm to produce a nearest neighbor interaction which is invariant under all graph automorphisms for which the original Markov random field was invariant \cite{Chandgotia}. See also \cite{avernicev_MRF_gibbs_1972},\cite{sherman_MRF_gibbs_1973},\cite{spitzer}. It is in general false that a shift-invariant Markov random field whose (shift-invariant) specification is compatible with some nearest neighbor interaction must also be compatible with some shift-invariant nearest neighbor interaction (see Corollary \ref{cor:no_invaraint_interaction} below). In particular, for a general topological Markov field $X$ we have $\G_X^\sigma \subset \M_X^\sigma \cap \G_X$, but the inclusion may be strict.

An inspection of the original proof of the Hammersley-Clifford Theorem actually gives the following a priori stronger result:

\begin{thm} \textbf{(Hammersley-Clifford Theorem, strong version)}\label{thm: strong_hammersley_clifford}
Let $X$ be a topological Markov field with a safe symbol. Then:
\begin{enumerate}
\item{Any Markov cocycle on $X$ is a Gibbs cocycle given by a nearest neighbor interaction. In our notation this is expressed by: $\M_X=\G_X$.}
\item{Further if $X$ is a subshift, any shift-invariant Markov cocycle on $X$ is a Gibbs cocycle given by a shift-invariant nearest neighbor interaction. In our notation this is expressed by: $\M^{\sigma}_X = \G^{\sigma}_X$.}
\end{enumerate}
\end{thm}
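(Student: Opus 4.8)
The plan is to reduce the strong Hammersley--Clifford statement to the combinatorial heart of the classical argument: an inclusion--exclusion (Möbius inversion) over finite subsets of $V$, carried out intrinsically in terms of the Markov cocycle $M$ rather than in terms of conditional probabilities of a fixed Markov random field. Fix a topological Markov field $X$ with safe symbol $\star$, and let $M \in \M_X$. For each finite $F \subset V$ define the ``ground configuration'' $x^{\star}$ to be the point which is $\star$ everywhere (note $x^{\star} \in X$ by the safe-symbol property applied with $A = \emptyset$), and for $a \in \A^{F}$ with $a \vee (\star)_{F^{c}} \in X$ let $x^{a}$ denote the point agreeing with $a$ on $F$ and equal to $\star$ off $F$. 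The safe-symbol property guarantees $x^{a} \in X$ and, more importantly, that $(x^{a}, x^{b}) \in \Delta_{X}$ for any two such configurations on finite sets. So $M$ assigns a real number to each pair of ``finitely-supported-over-$\star$'' configurations, and the cocycle identity makes $a \mapsto M(x^{\star}, x^{a})$ a consistent ``energy-like'' function on $\B(X)$ whose support is all of $X$ (again by the safe symbol).

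First I would define, for each finite $A \subset V$ and each $a \in \A^{A}$ occurring in $X$, the candidate interaction
\begin{equation*}
\phi(a) := \sum_{B \subset A} (-1)^{|A \setminus B|}\, M\!\left(x^{\star},\, x^{a|_{B} \vee (\star)_{A \setminus B}}\right),
\end{equation*}
the usual Möbius/inclusion--exclusion inverse. The first thing to check is that this is a \emph{nearest neighbor} interaction, i.e. $\phi(a) = 0$ whenever $\mathrm{diam}(A) > 1$. This is exactly where the Markov cocycle property enters: if $A$ contains two vertices $u, v$ that are non-adjacent, then flipping a site $u$ from $\star$ to $a_u$ does not interact with the sites outside $\{u\} \cup \partial\{u\}$, and since $v \notin \partial\{u\}$, the contribution of the inner sum splits as a product over $u$ and $A \setminus \{u\}$, forcing the alternating sum to vanish. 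I would phrase this carefully using the locality clause of the definition of a Markov cocycle (the value of $M(x,y)$ when $x, y$ differ on $F$ depends only on $x|_{F \cup \partial F}, y|_{F \cup \partial F}$), combined with the cocycle additivity, to show the relevant increments factor.

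Next I would verify that $M_{\phi} = M$, i.e. that the Gibbs cocycle built from $\phi$ recovers the original cocycle. By the cocycle identity it suffices to check $M_{\phi}(x^{\star}, x^{a}) = M(x^{\star}, x^{a})$ for all finite $A$ and all admissible $a \in \A^{A}$, together with a ``translation-to-arbitrary-base'' step showing that equality on pairs with $\star$-background propagates to all of $\Delta_{X}$ — here one writes a general $(x,y) \in \Delta_{X}$ as a chain through configurations that agree with $x$, then with $\star$, then with $y$, on the finite difference set, again using the safe symbol to stay inside $X$, and uses additivity of both $M$ and $M_{\phi}$. The equality $M_{\phi}(x^{\star}, x^{a}) = M(x^{\star}, x^{a})$ is then just Möbius inversion: summing $\phi(a|_{B})$ over $B \subset A$ telescopes back to $M(x^{\star}, x^{a})$ by the defining formula for $\phi$. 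For part (2), I would simply observe that every construction above is natural: if $M$ is shift-invariant then $\phi(\sigma_{n} a) = \phi(a)$ because the whole recipe commutes with the shift (the $\star$-background is shift-fixed), so $\phi$ is a shift-invariant nearest neighbor interaction; this also re-proves $\G_{X}^{\sigma} = \M_{X}^{\sigma} \cap \G_{X}$ is an equality in the safe-symbol case.

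The main obstacle I anticipate is bookkeeping rather than conceptual: making the factorization argument in the ``nearest neighbor'' step fully rigorous requires carefully isolating a single vertex $u$ with $a_u \neq \star$, splitting the inclusion--exclusion sum as (sum over whether $u \in B$) $\times$ (sum over $B \cap (A \setminus \{u\})$), and then invoking locality of $M$ to see that the two factors are literally equal in the two cases — which needs the hypothesis that $A$ is not contained in $\{u\} \cup \partial\{u\}$, i.e. that some other vertex of $A$ lies outside $u$'s closed neighborhood, equivalently $\mathrm{diam}(A) > 1$ in the $\Z^{d}$ (triangle-free) setting. One must be slightly cautious that the intermediate configurations $a|_{B} \vee (\star)_{A \setminus B}$ are all admissible in $X$; this is immediate from the safe-symbol property, which is precisely why the hypothesis is needed and why the theorem fails in its absence. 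A secondary point to handle cleanly is that $M$ is only defined on $\Delta_{X}$, so every configuration appearing must be checked to lie in $X$ and every pair to lie in $\Delta_{X}$ — but the safe symbol makes all of these routine.
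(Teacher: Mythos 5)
Your proposal takes exactly the route the paper points to when it says the strong version follows from ``an inspection of the original proof of the Hammersley--Clifford Theorem'': you run the classical Möbius/inclusion--exclusion construction against the $\star$-ground configuration, but with the cocycle value $M(x^{\star},\,\cdot\,)$ playing the role that $\log\bigl(P(\cdot)/P(x^{\star})\bigr)$ plays in the classical argument. The definition of $\phi$, the factor-out-a-vertex argument that an alternating sum over subsets of a non-clique $A$ vanishes (using the cocycle identity to turn the inner difference into $M(x^{a|_{B'}},x^{a|_{B'\cup\{u\}}})$ and then the locality of $M$ to see this depends only on $B'\cap\partial\{u\}$), the telescoping Möbius inversion for $M_{\phi}(x^{\star},x^{a})=M(x^{\star},x^{a})$, and the naturality argument giving part (2) are all correct and are the intended proof.

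The one step that does not close as written is the ``translation-to-arbitrary-base'' reduction. Chaining $x\to z\to y$ with $z$ equal to $\star$ on the difference set $F$ and invoking only additivity gives $N(x,y)=N(x,z)+N(z,y)$ for $N:=M-M_{\phi}$, but neither $(x,z)$ nor $(z,y)$ is a $\star$-background pair, so nothing yet forces these terms to vanish. What you actually need here is the locality of $N$ rather than its additivity: $N$ is itself a Markov cocycle (a difference of two Markov cocycles), so for $(x,y)\in\Delta_{X}$ differing on $F$ and $\tilde F:=F\cup\partial F$, locality gives $N(x,y)=N\bigl(x^{x|_{\tilde F}},x^{y|_{\tilde F}}\bigr)$, and then the cocycle identity through $x^{\star}$ gives $N\bigl(x^{x|_{\tilde F}},x^{y|_{\tilde F}}\bigr)=N(x^{\star},x^{y|_{\tilde F}})-N(x^{\star},x^{x|_{\tilde F}})=0$; no intermediate $z$ is needed at all. (Note that $x$ and $x^{x|_{\tilde F}}$ are not themselves homoclinic, which is another reason a pure chaining argument cannot work.) A small bookkeeping point you should also record explicitly is that your formula forces $\phi(a)=0$ whenever $a$ contains a $\star$, by pairing $B$ with $B\cup\{v\}$ for $v$ with $a_{v}=\star$; this is what guarantees only the terms $W\subset A$ with $a|_{W}$ $\star$-free survive in $M_{\phi}(x^{\star},x^{a})$, so the Möbius inversion telescopes cleanly.
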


It is easily verified that any topological Markov field $X$ which satisfies one of the conclusions of the ``strong version'' immediately satisfies the corresponding conclusion of the ``weak version''. We will demonstrate in the following section that the converse implication is false in general.


For $X$ with a safe symbol, the ``strong'' and ``weak'' versions are easily equivalent because of the following simple claim:
\begin{prop}\label{prop:safe_symbol_MRF_full_support}
Let $X$ be a topological Markov field with a safe symbol. Then any Markov random field $\mu$ adapted to $X$ has $supp(\mu)=X$.
\end{prop}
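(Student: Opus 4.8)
The plan is to show that if $\mu$ is adapted to $X$ but $\mathit{supp}(\mu) \subsetneq X$, we reach a contradiction by using the safe symbol to connect an arbitrary point of $X$ to a point in $\mathit{supp}(\mu)$ within the homoclinic relation $\Delta_X$. First I would fix a point $z \in \mathit{supp}(\mu)$ (the support of a Borel probability measure is non-empty) and let $x \in X$ be arbitrary; the goal is to produce $y \in \mathit{supp}(\mu)$ with $(x,y) \in \Delta_X$, which by the definition of ``adapted'' forces $x \in \mathit{supp}(\mu)$ as well, and hence $\mathit{supp}(\mu) = X$.

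The construction of $y$ proceeds in two steps, each using the safe symbol $\star$. Pick a large box $\Lambda = [-n,n]^d$ and apply the safe-symbol property to $x$ with $A = \Lambda$: the configuration agreeing with $x$ on $\Lambda$ and equal to $\star$ off $\Lambda$ lies in $X$; call it $x'$. Then apply the safe-symbol property to $z$ with $A = \Lambda^c$: the configuration $y$ agreeing with $z$ off $\Lambda$ and equal to $\star$ on $\Lambda$ lies in $X$. Since $x'$ and $y$ both equal $\star$ on all of $\Lambda$, and they differ from $z$ only on the finite set $\Lambda$ (as $y$ agrees with $z$ off $\Lambda$), we get $(z, y) \in \Delta_X$ — actually more directly, $y$ differs from $z$ only inside the finite set $\Lambda$, so $(z,y) \in \Delta_X$ immediately, without even needing $x'$. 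Hence $y \in \mathit{supp}(\mu)$ because $\mu$ is adapted to $X$. Now I would instead go the other direction: I want a point of $\mathit{supp}(\mu)$ homoclinic to $x$, so I should start from a point already known to be in the support and modify it on a finite set to agree with $x$.

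So the correct argument is: let $x\in X$ be arbitrary and $z \in \mathit{supp}(\mu)$. Using the safe symbol, build the ``interpolation'' point that equals $x$ on $\Lambda$ and $\star$ elsewhere — but to stay inside $\mathit{supp}(\mu)$ I must instead perturb $z$. The clean route is to use that $X$ is a topological Markov field together with the safe symbol: define $w$ to equal $\star$ on $\partial \Lambda$, equal $x$ on $\Lambda$, and equal $z$ on $(\Lambda \cup \partial\Lambda)^c$. The safe symbol property applied appropriately guarantees $w \in X$: indeed the configuration that is $\star$ off $\Lambda$ and agrees with $x$ on $\Lambda$ is in $X$, the configuration that is $\star$ on $\Lambda \cup \partial \Lambda$ and agrees with $z$ outside is in $X$, and since these two agree on $\partial\Lambda$ (both $\star$ there), the topological Markov field property glues them into a point $w \in X$ with $w|_{\Lambda} = x|_{\Lambda}$ and $w|_{(\Lambda\cup\partial\Lambda)^c} = z|_{(\Lambda\cup\partial\Lambda)^c}$. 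Then $w$ and $z$ differ only on the finite set $\Lambda \cup \partial \Lambda$, so $(z,w) \in \Delta_X$, whence $w \in \mathit{supp}(\mu)$ since $\mu$ is adapted. Thus $\mathit{supp}(\mu)$ contains a point agreeing with $x$ on $\Lambda = [-n,n]^d$ for every $n$; since $\mathit{supp}(\mu)$ is closed and these points converge to $x$ as $n \to \infty$, we conclude $x \in \mathit{supp}(\mu)$.

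The main obstacle, and the only place requiring care, is verifying that the glued configuration $w$ genuinely lies in $X$ — this is where both the safe-symbol hypothesis and the topological-Markov-field property of $X$ (Lemma-level fact that $\mathit{supp}(\mu)$ is a TMF, already recalled in the excerpt) are used in tandem, and one must be careful about the boundary layer $\partial\Lambda$ so that the two safe-symbol-modified configurations agree on it before invoking the gluing axiom. Everything else — non-emptiness of the support, closedness of the support, that finite-difference perturbations stay in $\Delta_X$, and the convergence $w^{(n)} \to x$ — is routine.
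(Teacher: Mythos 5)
Your proof is correct and uses essentially the same argument as the paper: replace a support point's values on a finite set by the target pattern, insert a safe-symbol buffer on the boundary layer, glue via the topological Markov field property, and invoke adaptedness to conclude the modified point is still in the support. The paper phrases this directly in terms of positivity of cylinder probabilities $\mu([a]_F)$ rather than taking a limit of points, but the construction and use of the hypotheses are the same.
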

\begin{proof}
Let $\mu$ be a Markov random field adapted to $X$.
We need to show that for all finite $F \subset V$ and $a \in \mathcal{B}_F(X)$, $\mu([a]_F)>0$.
Denote $F \cup \partial F$ by $\tilde{F}$. Let $b \in \mathcal{B}_{\tilde{F}}(X)$ and $c \in \mathcal{B}_{\partial \tilde{F}}(X)$ satisfy $\mu([b\vee c]_{\tilde{F} \cup \partial \tilde{F}}) > 0$. In particular, $\mu([c]_{\partial \tilde{F}}) > 0$.
Let $\tilde{b} \in \mathcal{B}_{\tilde{F}}(X)$ be given by
$$\tilde{b}_n := \begin{cases}
b_n & n \in F\\
\star & n \in \partial F.
\end{cases}$$
Note that $\tilde{b} \vee c\in \B_{\tilde{F}\cup \partial{\tilde F}}(X)$ because $\star$ is a safe-symbol.
Again, by the safe symbol property it follows that $\tilde{a} \in \mathcal{B}_{\tilde{F}}(X)$ where:
$$\tilde{a}_n := \begin{cases}
a_n & n \in F\\
\star & n \in \partial F.
\end{cases}$$
Since $X$ is a topological Markov field, $\tilde{a} \vee c \in \mathcal{B}_{\tilde{F} \cup \partial \tilde{F}}(X)$.
Since $\mu$ is an adapted Markov random field it follows that $\mu([\tilde{a}]_{\tilde{F}} \mid [c]_{\partial \tilde{F}}) >0$, and since $\mu([c]_{\partial \tilde{F}}) > 0$ it follows that $\mu([\tilde{a} \vee c]_{\tilde{F} \cup \partial \tilde{F}}) >0 $; in particular we get that $\mu([a]_F)>0$.
\end{proof}

\textbf{Remark:} Proposition \ref{prop:safe_symbol_MRF_full_support} is a particular instance of the more general fact that all $\Delta_X$-nonsingular measures $\mu$ satisfy $supp(\mu)=X$, whenever $\Delta_X$ is a topologically minimal. The latter condition means that for any $x \in X$, the $\Delta_X$-orbit $\Delta_X(x):= \{ y \in X~:~ (x,y) \in \Delta_X\}$ is dense in $X$. 

\textbf{Remark:} When the underlying graph is $\mathbb{Z}^d$, any shift-invariant topological Markov field $X$ with a safe symbol is actually a nearest-neighbor shift of finite type. This follows using arguments similar to those appearing in the proof of Proposition \ref{prop:safe_symbol_MRF_full_support}.

\subsection{The Pivot Property}\label{section: pivot}
We shall now consider a weaker property than that of having a safe symbol.
Let $X$ be a topological Markov field.
If $x,y \in X$ only differ at a single $v \in V$, then the pair $(x,y)$ will be called \emph{a pivot move} in X.
A topological Markov field $X$ is said to have \emph{the pivot property} if for all $(x,y)\in \Delta_X$ such that $x\neq y$ there exists a finite sequence of points $x^{(1)}=x, x^{(2)},\ldots, x^{(k)}=y \in X$ such that each $(x^{(i)}, x^{(i+1)})$ is a pivot move. In this case we say $x^{(1)}=x, x^{(2)},\ldots, x^{(k)}=y$ is a chain of pivots from $x$ to $y$. Sometimes in the literature the pivot property is called ``local move connectedness''. 
Here are some examples of subshifts which have the pivot property:

\begin{enumerate}
\item Any topological Markov field with a trivial homoclinic relation.
\item Any topological Markov field with a safe symbol.
\item The ``3-colored chessboard'' (Proposition \ref{prop:X_r_pivot_property} below).
\item $r$-colorings of $\mathbb{Z}^d$ with $r\geq 2d+2$.\label{enumerate: pivot property r-colouring}
\item The space of graph homomorphisms from $\Z^d$ to a ``dismantlable graph'', as in \cite{brightwell2000gibbs}.
\end{enumerate}

The fact that $r$-colorings of $\mathbb{Z}^d$ with $r\geq 2d+2$ have the pivot property is a consequence of the following:
\begin{prop} \label{prop:coloring_pirvot}
Let $G=(V,E)$ be a graph with all vertices of degree at most $d$, and $n \ge d+2$. Let $C_n$ denote the proper vertex
colorings of $G$ with $n$ colors:
$$ C_n := \left\{ x \in \{1,\ldots,n\}^V ~:~ x_v \ne x_w \mbox{ whenever } (v,w) \in E\right\}.$$
$C_n$ is a topological Markov field that has the pivot property.
\end{prop}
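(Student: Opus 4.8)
The plan is to verify the two assertions of Proposition \ref{prop:coloring_pirvot} separately: first that $C_n$ is a topological Markov field, and then that it has the pivot property, where the degree bound $d$ and the inequality $n \ge d+2$ enter only in the second part.

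\textbf{$C_n$ is a topological Markov field.} This is immediate from the fact that proper coloring is a nearest-neighbor constraint. Given a finite $F \subset V$ and $x, y \in C_n$ with $x|_{\partial F} = y|_{\partial F}$, define $z$ by $z_v = x_v$ for $v \in F$ and $z_v = y_v$ for $v \notin F$. I need to check $z \in C_n$, i.e. that $z_v \ne z_w$ for every edge $(v,w) \in E$. If both endpoints lie in $F$ this follows because $x$ is a proper coloring; if both lie in $F^c$ it follows because $y$ is a proper coloring; and if one endpoint, say $v$, lies in $F$ and the other, $w$, lies in $F^c$, then $w \in \partial F$, so $z_w = y_w = x_w$ while $z_v = x_v$, and $x_v \ne x_w$ since $x$ is proper. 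So $z \in C_n$.

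\textbf{$C_n$ has the pivot property.} Let $(x,y) \in \Delta_{C_n}$ with $x \ne y$, and let $F = \{v \in V : x_v \ne y_v\}$, a finite set. The plan is to induct on $|F|$; the key claim is that whenever $F \ne \emptyset$ there is a vertex $v \in F$ and a color $c$ such that changing $x$ at $v$ to $c$ produces a point $x' \in C_n$ which is still homoclinic to $y$ and agrees with $y$ on a strictly larger set — ideally with $x'_v = y_v$, so that $|F|$ drops by one. To find such a $v$, I would look at the "frontier": pick any $v \in F$ that is adjacent to some vertex of $F^c$ if one exists, or any $v\in F$ otherwise. Actually the cleanest choice is to argue that for \emph{some} $v \in F$ we can recolor $x$ at $v$ to the target color $y_v$ while staying proper. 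The color $y_v$ is forbidden at $v$ only by those neighbors $w$ of $v$ with $x_w = y_v$. Since $v$ has at most $d$ neighbors, this is at most $d$ constraints; the subtlety is that simply having $n \ge d+2$ colors does not by itself let us set $x_v = y_v$ directly, because some neighbor $w$ might currently have $x_w = y_v$ — but such a $w$ must lie in $F$ (as $y$ is proper, $w$ adjacent to $v$ forces $y_w \ne y_v = x_w$, so $x_w \ne y_w$). So one first recolors those neighbors away from $y_v$ using intermediate pivots: each such $w$ has at most $d$ neighbors, hence at most $d$ forbidden colors at $w$ among the current colors of its neighbors, and with $n \ge d+2$ there is a color available at $w$ distinct from $y_v$, $x_w$, and all colors currently on neighbors of $w$. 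This is where $n \ge d+2$ is used: we need room for one "spare" color beyond the $d$ neighbor-colors and beyond $y_v$. After clearing all neighbors of $v$ that carry color $y_v$, recolor $v$ to $y_v$; this is one more pivot, and now $x$ agrees with $y$ at $v$. One must be careful that the intermediate recolorings of the neighbors $w$ do not permanently push the configuration "further" from $y$ — but since each such $w$ was in $F$ to begin with (it disagreed with $y$), and we only need a \emph{finite} chain of pivots ending at $y$, it suffices to drive down a suitable nonnegative integer potential; for instance, run the above procedure to fix $v$, then recurse on the (still finite) disagreement set, noting that $v$ has been permanently fixed and will not be touched again because once $x$ and $y$ agree at $v$ there is never a need to change $v$ in the remaining process. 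Formalizing "$v$ is never touched again" requires choosing the recursion so that fixed vertices stay fixed; one clean way is to order $F = \{v_1, \ldots, v_m\}$ and fix them one at a time, at stage $i$ only ever recoloring vertices in $\{v_i, \ldots, v_m\}$ plus possibly some already-matching vertices momentarily — so actually one should instead induct directly: reduce $|F|$ by at least one via the above, then apply the inductive hypothesis to the new pair.

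\textbf{Main obstacle.} The genuine difficulty is the bookkeeping in the pivot argument: ensuring that the chain of pivots terminates. Naively recoloring neighbors to clear color $y_v$ can introduce new disagreements with $y$ at vertices that had previously agreed, so $|F|$ is not obviously monotone during the intermediate steps. The fix is to observe that any vertex $w$ we recolor in order to free up $v$ already lay in $F$ (shown above via properness of $y$), so the set of vertices ever differing from $y$ throughout the procedure stays within the original $F$; then one argues that after the finitely many moves that fix $v_1$, we are in the same situation with the strictly smaller finite set $F \cap (\text{disagreements})$ and no move will ever again need to alter $v_1$ — hence induction on $|F|$ goes through. Checking that $n \ge d+2$ (rather than $d+1$) is exactly what is needed at every recoloring step — one forbidden color for each of the $\le d$ neighbors, plus we must avoid the "bad" color $y_v$ when recoloring a neighbor $w$ — completes the proof.
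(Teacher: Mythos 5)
Your proof is correct but takes a genuinely different route from the paper's. The paper iterates over the \emph{colors} $1,\ldots,n$: for each $i$ it first recolors every $v\in F$ currently carrying color $i$ to a spare $j\notin\{x_u : u\in\{v\}\cup\partial\{v\}\}$ (which exists since $n\ge d+2>|\{v\}\cup\partial\{v\}|$), then installs color $i$ at every $v\in F$ with $y_v=i$; the invariant that after the $i$-th pass only $v\in F$ with $y_v>i$ can disagree with $y$ makes termination automatic after $n$ passes, with at most $2|F|$ pivots each. Your proof iterates over the \emph{vertices} of $F$: fix $v\in F$ by clearing $y_v$ from the neighbors of $v$ (each such neighbor $w$ is in $F$ since $x_w=y_v\ne y_w$ by properness of $y$), then set $x_v:=y_v$; all intermediate pivots stay inside $F$, so the disagreement set is strictly contained in $F\setminus\{v\}$ afterward, and induction on $|F|$ closes. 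The count $n\ge d+2$ enters both proofs identically — a spare color beyond the $\le d$ neighbor colors plus one designated forbidden color. The paper's color sweep avoids the termination bookkeeping you work through in your last paragraph; your vertex sweep keeps moves local to a shrinking disagreement set. The topological Markov field part is the routine nearest-neighbor check you indicate, which the paper takes for granted.
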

\begin{proof}
Given $x,y \in \Delta_{C_n}$ we will describe a sequence of pivot moves from $x$ to $y$: Let $F := \{ v \in V~:~ x_v \ne y_v\}$.
Let $x^{(0)}:= x$. For every $i \in \{1,\ldots, n\}$, obtain a point $x^{(i)} \in C_n$ by pivot moves starting from $x^{(i-1)}$.  The points $x^{(i)}$ will have the property that $x^{(i)}_v=y_v$ unless $v \in F$ and $y_v >i$.
Roughly, we first replace all the occurrences of $i$ in $x^{(i-1)}|_F$ one by one, and then apply  more pivot moves until resulting configuration $x^{(i)}$ is equal to $y$ at all sites in $F$ where $y$ has a symbol less than or equal to $i$.
Precisely, the  pivot moves from  $x^{(i-1)}$ to  $x^{(i)}$ are applied in two steps as follows:
\begin{itemize}
\item Step $1$:
For every $v \in F$ such that $x^{(i-1)}_v= i$, choose $j \in \{1,\ldots,n\}$ such that $$j \not \in \left\{ x^{(i-1)}_w ~:~ w \in \partial \{v\} \cup \{v\} \right\}.$$ 
Such $j$ exists because by our assumption on $n$: $|\partial \{v\} \cup \{v\}| \le d+1 < n$. In particular $i \ne j$.
Make a pivot move by changing $x^{(i-1)}_v$ from $i$ to $j$.
After at most $|F|$ pivot moves we obtain the point $z$ such that $z_v \ne i$ for all $v \in F$ and $z_v = x^{(i-1)}_v$ unless $v \in F$ and $x^{(i-1)}_v=i$.
\item Step $2$: For every $v \in F$ such that $y_v =i$ apply a pivot move by changing $z_v$ to $i$:
\begin{equation*}
x^{\prime}_w:=\begin{cases}
z_w \text{ if } w\neq v\\
i \ \text{ if }w =v.
\end{cases}
\end{equation*}
To see that $x^{\prime} \in C_n$, we need to check that $z_w \ne i$ for any $w \in \partial \{v\}$. Indeed if $w \in \partial \{v\} \cap F$ then $z_w \ne i$ by Step $1$. If $w \in \partial \{v\} \cap F^c$ then $z_w = y_w$. Because $y_v =i$ it follows that $y_w \ne i$.
Now iterate with $z$ replaced by $x^{\prime}$. After at most $|F|$ pivot moves we obtain the point $x^{(i)}$. This configuration has the property that $x^{(i)}_v=y_v$ unless $v \in F$ and $y_v >i$.
\end{itemize}
This describes a finite sequence of pivot moves from $x=x^{(0)}$ to $x^{(n)}=y$.
\end{proof}

\begin{prop} \label{finite pivot dimension}
Let $X\subset \mathcal{A}^{\mathbb{Z}^d}$ be a shift-invariant topological Markov field with the pivot property. Then the dimension of $\M^{\sigma}_X$ is finite.
\end{prop}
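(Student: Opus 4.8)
The plan is to show that a shift-invariant Markov cocycle $M$ on $X$ is completely determined, via a linear map, by the finitely many values it takes on ``pivot moves at the origin'', and hence that $\M^{\sigma}_X$ embeds linearly into a finite-dimensional space.

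First I would record two elementary consequences of the cocycle identity: for every $\Delta_X$-cocycle $M$ one has $M(x,x)=0$ (since $M(x,x)=M(x,x)+M(x,x)$), and if $x=x^{(1)},x^{(2)},\ldots,x^{(k)}=y$ is any finite sequence in $X$ with each consecutive pair in $\Delta_X$, then $M(x,y)=\sum_{i=1}^{k-1}M(x^{(i)},x^{(i+1)})$. Next, set $S:=\{0\}\cup\partial\{0\}\subset\ZD$, a finite set of cardinality $2d+1$, and let $\mathcal{P}$ be the (finite) set of pairs $(a,b)\in\B_{S}(X)\times\B_{S}(X)$ with $a|_{\partial\{0\}}=b|_{\partial\{0\}}$ for which there exist $x,y\in X$ with $x_n=y_n$ for all $n\neq 0$, $x|_{S}=a$ and $y|_{S}=b$; note $|\mathcal{P}|\le|\A|^{2d+2}$ since $a$ and $b$ agree on the $2d$ sites of $\partial\{0\}$. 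Given $M\in\M^{\sigma}_X$ and $(a,b)\in\mathcal{P}$, the Markov cocycle condition applied with $F=\{0\}$ states exactly that for a pivot move $(x,y)$ at $0$ the value $M(x,y)$ depends only on $x|_{S}$ and $y|_{S}$; hence all $x,y$ realizing $(a,b)$ give the same value, which we call $\Phi(M)(a,b)$. This defines a map $\Phi\colon\M^{\sigma}_X\to\R^{\mathcal{P}}$, which is linear because the vector space operations on cocycles are pointwise.

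It then remains to prove that $\Phi$ is injective, for then $\dim\M^{\sigma}_X\le|\mathcal{P}|<\infty$. Suppose $\Phi(M)=0$ and take any $(x,y)\in\Delta_X$. If $x=y$ then $M(x,y)=0$ by the first observation. Otherwise, the pivot property supplies a chain of pivots $x=x^{(1)},x^{(2)},\ldots,x^{(k)}=y$ in $X$; say $x^{(i)}$ and $x^{(i+1)}$ differ only at the site $v_i\in\ZD$. Then $\big(\sigma_{v_i}(x^{(i)}),\sigma_{v_i}(x^{(i+1)})\big)$ is a pivot move at the origin, so by shift-invariance of $X$ and of $M$ we get $M(x^{(i)},x^{(i+1)})=M\big(\sigma_{v_i}(x^{(i)}),\sigma_{v_i}(x^{(i+1)})\big)=\Phi(M)(\,\cdot\,)=0$. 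Summing along the chain using the additivity observation yields $M(x,y)=0$. Hence $M\equiv0$ on $\Delta_X$, proving injectivity.

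I do not expect a serious obstacle here; the two points requiring care are (i) the well-definedness of $\Phi$, which rests precisely on invoking the Markov cocycle condition with the single-site set $F=\{0\}$, and (ii) the passage from an arbitrary homoclinic pair to origin pivots, which must combine the pivot property, shift-invariance of $M$, and additivity of cocycles along chains — so the finiteness is really a statement that ``the pivot property makes pivots a spanning set of local moves whose local types form a finite list.''
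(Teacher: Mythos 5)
Your proof is correct and follows essentially the same route as the paper's: decompose $M(x,y)$ along a pivot chain, shift each pivot to the origin by shift-invariance, and use the Markov condition with $F=\{0\}$ to see that the value of a pivot at the origin depends only on the two patterns on $\{0\}\cup\partial\{0\}$, yielding a linear embedding into a finite-dimensional space. The paper states this more tersely, bounding $\dim \M^{\sigma}_X$ by $|\B_{\{0\}\cup\partial\{0\}}(X)|^2$ without spelling out the map $\Phi$ or its injectivity, but the underlying argument is identical.
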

\begin{proof}
Let $(x,y)\in \Delta_X$. Let $x=x^{(1)}, x^{(2)}, x^{(3)} ,\ldots ,x^{(k)}=y$ be a chain of pivots from $x$ to $y$. Then
\begin{equation}\label{cocyclepivot}
M(x,y)= \sum_{i=1}^{k-1}M(x^{(i)}, x^{(i+1)}).
\end{equation}
If $x^{(i)}, x^{(i+1)}$ differ only at $m_i \in \Z^d$ then $M(x^{(i)}, x^{(i+1)})=M(\sigma_{-m_i}x^{(i)}, \sigma_{-m_i}x^{(i+1)})$ depends only on ${\sigma_{-m_i}x^{(i)}|_{\{0\} \cup \partial \{0\}}}$ and ${\sigma_{-m_i}x^{(i+1)}|_{\{0\} \cup \partial \{0\}}}$. Therefore the dimension of the space of shift-invariant Markov cocycles is bounded by $|\B_{\{0\} \cup \partial \{0\}}(X)|^2$.
\end{proof}

\textbf{Remark:} More generally, the same proof shows that $\dim(\M_X) <\infty $ whenever there are a finite number of maps
$f_1,\ldots,f_k:X \to X$ and a finite $F \subset \mathbb{Z}^d$ so that $(f_i(x))_n=x_n$ whenever $n \not\in F$ and $\Delta_X$ is the orbit relation generated by
$$ \{\sigma_n^{-1} \circ f_i \circ \sigma_n ~:~ n \in \mathbb{Z}^d,~ i=1,\ldots,k \}.$$
This corresponds to subshifts with the ``generalized pivot property'': There is an allowed ``generalized pivot move'' from $x \in X$ to $y= f_i(x) \in X$ where $x_n$ and $y_n$ can differ only when $n$ is in a fixed finite set $F \subset \mathbb{Z}^d$. ``Domino tiles'' are an interesting and well known example for a subshift with the generalized pivot property \cite{signmatrixanddomino1}.

\section{ $\mathbb{Z}_r$-homomorphisms, $3$-colored chessboards and height functions}\label{sec:Zr_homomorphisms}

Recall that a \emph{graph-homomorphism} from the graph $G_1=(V_1,E_1)$ to the graph $G_2=(V_2,E_2)$ is a function $f:V_1 \to V_2$ from the vertex set of $G_1$ to the vertex set
of $G_2$ such that $f$ sends edges in $G_1$ to edges in $G_2$. Namely, if $(v,w) \in E_1$ then $(f(v),f(w)) \in E_2$.
We consider $\ZD$ as the vertex set of the standard Cayley graph, where an edge $(n,m)$ is present if and only if $\|n-m\|_1=1$. Also a subset $A \subset \ZD$ will denote the induced subgraph of $\ZD$ on $A$.

For the purposes of this paper, a \emph{height function} on $A \subset \Z^d$ is a graph homomorphism from $A$ to the standard Cayley graph of $\mathbb{Z}$.
We denote the set of height functions on $A \subset \ZD$ by $\Ht(A)$:

\begin{equation}
Ht^{(d)}(A) := \left\{ \hat{x} \in \Z^A ~:~ |\hat{x}_n - \hat{x}_m| =1 \mbox{ whenever } n,m \in A \mbox{ and } \| n-m\|_1 =1 \right\}.
\end{equation}

In particular, we denote
$$ Ht^{(d)} := Ht^{(d)}(\ZD)$$


We now introduce a certain family of $\mathbb{Z}^d$-shifts of finite type $X_r^{(d)}$, where $r,d \in \mathbb{N}$, and $r>1$:
Denote by $\mathbb{Z}_r = \mathbb{Z} / r\mathbb{Z} \cong \{0,\ldots,r-1\}$ the finite cyclic group of residues modulo $r$. Let $\phi_r: Ht^{(d)} \to (\mathbb{Z}_r)^{\mathbb{Z}^d}$ be defined by
$$(\phi_r(\hat{x}))_n := \hat{x}_n\!\!\! \mod r \mbox{ for all } n \in \mathbb{Z}^d.$$
The $\mathbb{Z}^d$-subshift $X_r^{(d)}$ is defined by:
$$ X^{(d)}_r := \phi_r(Ht^{(d)}).$$

For $r=2$ it is easily verified that $X_2^{(d)}$ consists precisely of two points $x^{even},x^{odd}$.
These are ``chessboard configurations'', given by $x^{even}_n = \|n\|_1 \mod 2$ and $x^{odd}_n = \|n\|_1+1 \mod 2 $.


In the following, to avoid cumbersome superscripts, we will fix some dimension $d \ge 2$, and denote $Ht := Ht^{(d)}$, $X_r:= X^{(d)}_r$ and $Ht(A):=Ht^{(d)}(A)$ for all $A\subset \Z^d$.

For $r \ne 1,4$, there is a direct and simple interpretation for the subshift $X_r$ as the set of graph homomorphisms from the standard Cayley graph of $\mathbb{Z}^d$ to the standard Cayley graph of $\mathbb{Z}_r$ (Proposition \ref{prop:X_r_def} below).
In the particular case $r=3$ the standard Cayley graph of $\mathbb{Z}_r$ is the complete graph on $3$ vertices, and so
$X_3$ is the set of proper vertex-colorings of the standard Cayley graph of $\mathbb{Z}^d$ with colors in $\{0,1,2\}$.
This relation has certainly been noticed and recorded in the literature. For instance, it is stated without proof in \cite{galvin_homomorphism_cube_2003}.
For the sake of completeness, we bring a self-contained proof in Proposition \ref{prop:X_r_def} and Lemma \ref{prop:height_well_defined} below. The proofs below essentially follow \cite{schmidt_cohomology_SFT_1995,schimdt_fund_cocycle_98}, where the corresponding results are obtained for the case $d=2$, $r=3$. Within the proof we also define a function $\DHt:X_r \times \ZD \to \mathbb{Z}$, which we use later on.

\begin{prop}\label{prop:X_r_def}
For any $d \ge 2$ and $r \in \mathbb{N} \setminus \{1,4\}$, $X_r$ is a nearest neighbor shift of finite type given by
$$ X_r = \{ x \in (\mathbb{Z}_r)^{\mathbb{Z}^d} ~:~ x_n - x_m = \pm 1 \mod r \mbox{, whenever } \|n-m\|_1=1 \}.$$
\end{prop}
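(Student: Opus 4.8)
The plan is to prove the equality through two inclusions: the easy direction $X_r \subseteq Y_r$ (writing $Y_r$ for the set on the right-hand side), and the substantive direction $Y_r \subseteq X_r$, which amounts to lifting a $\mathbb{Z}_r$-valued configuration satisfying the edge constraints to a genuine height function. The set $Y_r$ is manifestly a nearest neighbor shift of finite type, being defined by constraints on single edges. For $X_r \subseteq Y_r$: if $x = \phi_r(\hat x)$ with $\hat x \in Ht$, then adjacent $n,m$ satisfy $\hat x_n - \hat x_m = \pm 1$ in $\Z$, hence $x_n - x_m = \pm 1 \mod r$. The case $r = 2$ is degenerate and I would dispatch it by hand: $Y_2 = \{x^{even}, x^{odd}\}$, and $x^{even} = \phi_2(\|\cdot\|_1)$, $x^{odd} = \phi_2(\|\cdot\|_1 + 1)$, so $Y_2 = X_2$. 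Thus I may assume $r \in \N \setminus \{1,2,4\}$ henceforth.

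For $Y_r \subseteq X_r$, fix $x \in Y_r$. The idea is to extract a $\{\pm1\}$-valued discrete gradient from $x$ and integrate it. Since $r \ge 3$, the classes $+1$ and $-1$ are distinct in $\mathbb{Z}_r$, so for each ordered pair $(n,m)$ of adjacent sites there is a unique $\epsilon_x(n,m) \in \{+1,-1\} \subset \Z$ with $\epsilon_x(n,m) \equiv x_m - x_n \pmod r$; note $\epsilon_x(m,n) = -\epsilon_x(n,m)$. I would then define $\DHt(x,n)$ to be the sum of $\epsilon_x$ along the edges of an arbitrary finite path in $\ZD$ from the origin to $n$, and take as candidate lift $\hat x_n := \bar x_0 + \DHt(x,n)$, where $\bar x_0 \in \{0,\dots,r-1\}$ represents the class $x_0 \in \mathbb{Z}_r$.

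The heart of the argument --- and the step I expect to be the only real obstacle --- is showing $\DHt(x,\cdot)$ is well defined, i.e.\ that $\sum \epsilon_x$ along a path from $0$ to $n$ is independent of the path. First I would prove the local statement that $\epsilon_x$ has vanishing sum around every unit square $n,\,n+e_i,\,n+e_i+e_j,\,n+e_j$: traversing the square once, the residues $x_m - x_n$ telescope to $0$, so the integer sum $S \in \{-4,-2,0,2,4\}$ satisfies $S \equiv 0 \pmod r$; since $r = 3$ or $r \ge 5$, this forces $S = 0$. This is precisely where the hypothesis $r \ne 4$ is used: for $r = 4$ a ``winding'' square with $S = \pm 4$ satisfies all edge constraints yet admits no lift. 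Granting square-closedness, path-independence follows from the standard fact that the $\ZD$ grid graph is simply connected as a cube complex: any two paths with the same endpoints are related by a finite sequence of backtrack insertions/deletions ($u,v,u \leftrightarrow u$) and square flips (swapping two consecutive sides of a unit square for the other two), and $\sum \epsilon_x$ is invariant under a backtrack move by antisymmetry and under a square flip by square-closedness. For $d = 2$ this simple-connectivity fact is in \cite{schmidt_cohomology_SFT_1995,schimdt_fund_cocycle_98}; for general $d$ it follows by a short induction on $d$, pushing a closed walk down from its top level in the last coordinate one square flip at a time. (To sidestep the general lemma one can instead compute $\DHt(x,\cdot)$ along the monotone staircase path from $0$ to $n$ and directly check $\hat x_{n+e_j} - \hat x_n = \epsilon_x(n,n+e_j)$, commuting the extra coordinate-$j$ step past the remaining steps by square flips.)

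It then remains only to verify that $\hat x$ works. For adjacent $n,m$, appending the edge $m \to n$ to a path $0 \to m$ gives $\DHt(x,n) = \DHt(x,m) + \epsilon_x(m,n)$, hence $|\hat x_n - \hat x_m| = 1$ and $\hat x \in Ht$. Reducing the defining sum modulo $r$ telescopes to $\DHt(x,n) \equiv x_n - x_0 \pmod r$, so $\hat x_n \equiv x_n \pmod r$, i.e.\ $\phi_r(\hat x) = x$, which gives $x \in X_r$. This establishes $Y_r \subseteq X_r$, hence $X_r = Y_r$; being $Y_r$, the subshift $X_r$ is a nearest neighbor shift of finite type. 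The function $\DHt(x,n)$ produced here is the one referred to for later use.
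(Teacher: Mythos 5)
Your proof is essentially the same as the paper's: both dispatch $r=2$ by hand, extract the $\{\pm1\}$-valued edge increments, define $\DHt$ by summing along paths, reduce path-independence to vanishing of the sum around each unit square, and use that a sum of four $\pm1$'s vanishing mod $r$ must vanish in $\Z$ precisely because $r\notin\{1,2,4\}$. The only difference is that you spell out the reduction from general path-independence to the four-cycle identity (via elementary moves on paths), which the paper leaves implicit.
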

\begin{proof}
When $r=2$, by our previous remark $X_2 = \{x^{odd},x^{even}\}$; the proposition is easily verified in this case.
From now on assume $r \in \mathbb{N} \setminus \{1,2,4\}$.
Temporarily, let us denote
$$ Y_r := \{ x \in (\mathbb{Z}_r)^{\mathbb{Z}^d} ~:~ x_n - x_m = \pm 1 \mod r \mbox{, whenever } \|n-m\|_1=1 \}.$$
We need to establish that $Y_r = X_r$.

For all $\hat{x} \in Ht$ and $m,n \in \mathbb{Z}^d$ with $\|n -m\|_1=1$, by definition of $Ht$, we have $|\hat{x}_n - \hat{x}_m| = 1$. Thus, $(\phi_r(\hat{x}))_n-(\phi_r(\hat{x}))_m = \pm 1 \mod r$, and so $\phi_r(\hat{x}) \in Y_r$. This establishes the inclusion $ X_r\subset Y_r $.

To complete the proof, given $x \in Y_r$ we will exhibit $\hat{x} \in Ht$ so that $\phi_r(\hat{x})=x$.
Choose $n_0 \in \mathbb{Z}^d$ and define $\hat{x}_{n_0} := x_{n_0}$.
For all $n \in \mathbb{Z}^d$, choose a path $n_0,n_1\ldots,n_k=n$ from $n_0$ to $n$ in the standard Cayley graph of $\mathbb{Z}^d$, meaning, $n_{i+1}-n_i \in \{\pm e_1,\ldots,\pm e_d\}$. Define
$$\hat{x}_n := \hat{x}_{n_0} + \sum_{i=1}^k [x_{n_i} - x_{n_{i-1}}],$$
where for $x \in Y_{r}$ and $m,n \in \ZD$ with $\|m-n\|_1 =1$,
\begin{equation}\label{eq:brk_def}
[x_{n}-x_{m}]:=\begin{cases}
1&\text{ if }x_{n}-x_{m}= 1\mod r \\
-1 &\text{ if }x_{n}-x_{m}= -1\mod r
\end{cases}.
\end{equation}

We claim that for all $x \in Y_r$ the value of $\hat{x}_n$
thus obtained is independent of the path chosen from $n_0$ to $n$.
Another way to express this is as follows:

For $x \in Y_r$ and $n \in \{\pm e_1,\ldots,\pm e_d\}$ let
\begin{equation}\label{eq:Dht_generators}
\DHt(x,n):= [x_n -x_0].
\end{equation}
Extend $\DHt$ to a map $\DHt:Y_r \times \ZD \to \mathbb{Z}$ as follows: For an arbitrary $n \in \ZD$ write $n=\sum_{j=1}^M s_j$, with $s_j \in \{\pm e_1,\ldots,\pm e_d\}$. Define:
\begin{equation}\label{eq:Ht_coycle}
\DHt(x,n) := \sum_{k=1}^M \DHt(\sigma_{n_{k-1}}x,s_k),
\end{equation}
where $n_k := \sum_{j=1}^k s_j$ and the expressions appearing in the sum on the right-hand side of \eqref{eq:Ht_coycle} are defined by \eqref{eq:Dht_generators}.
We will now verify that $\DHt(x,n)$ is well defined, which means it does not depend on the representation $n=\sum_{j=1}^M s_j$.
Specifically, we will check that for all $t_1,\ldots,t_N, s_1,\ldots,s_k \in \{\pm e_1,\ldots, \pm e_d\}$ satisfying $\sum_{i=1}^N t_i =\sum_{j=1}^k s_j$ and $x \in Y_r$
\begin{equation}\label{eq:indepedence_path_general}
\sum_{j=1}^k [x_{n_j}-x_{n_{j-1}}] = \sum_{j=1}^N [x_{m_j}-x_{m_{j-1}}],
\end{equation}
where $n_j = \sum_{i=1}^j s_i$ and $m_j = \sum_{i=1}^j t_i$.
From \eqref{eq:brk_def} it follows that $[x_n -x_m] = -[x_m - x_n]$ whenever $\|m-n\|_1 =1$. To check \eqref{eq:indepedence_path_general}, it thus suffices to verify that for all $i,j \in \{1,\ldots,d\}$ and $x \in Y_r$:

\begin{equation}\label{eq:4_cycle_ind}
[x_{e_j}-x_{0}] + [x_{e_j+e_i} - x_{e_j}] = [x_{e_i}-x_{0}] + [x_{e_i+e_j} - x_{e_i}]
\end{equation}
From the definition \eqref{eq:brk_def}, the equality in \eqref{eq:4_cycle_ind} holds modulo $r$. Also, $$[x_{e_j}-x_0],[x_{e_j+e_i} - x_{e_j}],[x_{e_i}-x_0],[x_{e_j+e_i} - x_{e_i}] \in \{\pm 1\}.$$
Thus \eqref{eq:4_cycle_ind} is a consequence of the following simple exercise:

For all $A_1,A_2,A_3,A_4 \in \{\pm 1\}$ satisfying $$A_1+A_2+A_3+A_4= 0 \mod r, \mbox{ where } r \in \mathbb{N} \setminus \{1,2,4\},$$ we have
$$A_1+A_2+A_3+A_4= 0.$$


It now follows that for all $x \in Y_r$ and $n \in \ZD$
\begin{equation}\label{eq:DHT_diff_lift}
\DHt(x,n)= \hat{x}_n - \hat{x}_0.
\end{equation}


In particular it follows that for all $n,m \in \mathbb{Z}^d$ with $\|n-m\|_1=1$, $\hat{x}_m -\hat{x}_n \in \{\pm 1\}$.
So indeed $\hat{x} \in Ht$. It is straightforward to check that $\phi_r(\hat{x})=x$.

\end{proof}

\textbf{Remark:} From the proof above we see that the map $\DHt:X_r \times \ZD \to \mathbb{Z}$ satisfies the following relation:
\begin{equation}\label{eq:DHt_coycle_relation}
\DHt(x,n+m)=\DHt(x,n)+ \DHt(\sigma_n(x),m).
\end{equation}
This means that $\DHt$ is a \emph{cocycle} for the shift-action on $X_r$. See \cite{schmidt_cohomology_SFT_1995,schimdt_fund_cocycle_98} for more on cocycles for $X_r$ and other subshifts.

On the first reading of the following sections, we advise the reader to keep in mind the case $r=3$ and $d=2$, in which $X^{(d)}_r$ is the ``$2$-dimensional $3$-colored chessboard''.


\textbf{Remark:} For the ``exceptional'' case $r=4$, $X_{4}$ is still a shift of finite type. This can be directly deduced from the following presentation:
\begin{eqnarray*}
X^{(d)}_4&=&\{x\in (\Z_r)^{\Z^d}~:~ x_n- x_m= \pm 1 \mod 4,\text{ whenever } ||n-m||_1=1\\
&&\text{ and } [x_{p}- x_{p+e_i}]+[x_{p+e_i}- x_{p+e_i+e_j}]=[x_{p}- x_{p+e_j}]+
[x_{p+e_j}-x_{p+e_i+e_j}]\\&&\text{ for all } 1\leq i, j\leq d \text{ and }p \in \Z^d\}.
\end{eqnarray*}
However $X^{(d)}_4$ is not a topological Markov field, as we now explain. For simplicity assume $d=2$. Let $x,y\in X^{(2)}_4$ be the periodic points satisfying
$$y_n = \sum_{i=1}^2 n_i \mod 4 \mbox{ and }x_n = 2-\sum_{i=1}^2 n_i \mod 4.$$ 
That is:
\begin{eqnarray}
y=
\begin{smallmatrix}
\ldots & \vdots & \vdots & \vdots & \ldots\\
\ldots& 1 & 2& 3 & \ldots\\
\ldots & 0 &1&2 & \ldots\\
\ldots & 3 & \bar{0}&1& \ldots\\
\ldots & 2 & 3&0& \ldots\\
\ldots & \vdots & \vdots & \vdots & \ldots
\end{smallmatrix}
\text{ and }
x=
\begin{smallmatrix}
\ldots & \vdots &\vdots & \vdots & \ldots\\
\ldots&1& 0& 3& \ldots\\
\ldots& 2& 1&0& \ldots\\
\ldots& 3 & \bar{2}&1& \ldots\\
\ldots& 0 & 3&2& \ldots\\
\ldots & \vdots & \vdots & \vdots & \ldots
\end{smallmatrix}
\end{eqnarray}
Observe that $x_0=2$ and $y_0=0$ and $x|_{\partial \{0\}}= y|_{\partial \{0\}}$. However the configuration $z$ given by
$$z_i=\begin{cases}
x_0\text{ if }i=0\\
y_i\text{ otherwise}
\end{cases}$$
is not an element of $X^{(2)}_4$ because
$$[z_{-e_2}-z_{-e_2+e_1}]+[z_{-e_2+e_1}-z_{e_1}]=-2$$
while
$$[z_{-e_2} -z_{0}]+[z_{0}-z_{e_1}]=2.$$
The same construction works for any $d\ge 2$.

\begin{lemma}\label{prop:height_well_defined}
Fix any $d \ge 2$ and $r \in \mathbb{N} \setminus \{1,2,4\}$.
For $x \in X_r$ any two pre-images under $\phi_r$ differ by a constant integer multiple of $r$, that is, if
$\hat{x},\hat{y} \in Ht$ satisfy $\phi_r(\hat{x})=\phi_r(\hat{y})$ then
there exists $M \in \mathbb{Z}$ so that $\hat{x}_n-\hat{y}_n=rM$ for all $n\ \in \mathbb{Z}^d$.
\end{lemma}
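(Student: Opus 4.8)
The plan is to leverage the cocycle $\DHt : X_r \times \ZD \to \mathbb{Z}$ that was constructed within the proof of Proposition \ref{prop:X_r_def}. The crucial observation is that this map depends only on $x \in X_r$ and not on any choice of preimage under $\phi_r$: it is built purely out of the quantities $[x_n - x_m]$ from \eqref{eq:brk_def}, which are determined by $x$ alone, and equations \eqref{eq:Dht_generators}--\eqref{eq:Ht_coycle} show it is well defined precisely because $r \notin \{1,2,4\}$. This is exactly where the hypothesis $r \in \mathbb{N} \setminus \{1,2,4\}$ enters.

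First I would recall that \eqref{eq:DHT_diff_lift} asserts $\DHt(x,n) = \hat{x}_n - \hat{x}_0$ for \emph{any} preimage $\hat{x} \in Ht$ of $x$ under $\phi_r$. Applying this to the two given preimages $\hat{x}$ and $\hat{y}$ of the same point $x$, we obtain $\hat{x}_n - \hat{x}_0 = \DHt(x,n) = \hat{y}_n - \hat{y}_0$ for every $n \in \ZD$. Rearranging, $\hat{x}_n - \hat{y}_n = \hat{x}_0 - \hat{y}_0 =: c$, a constant independent of $n$.

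It remains only to see that $c$ is an integer multiple of $r$. Since $\phi_r(\hat{x}) = \phi_r(\hat{y})$, evaluating at the site $0$ gives $\hat{x}_0 \equiv \hat{y}_0 \pmod r$, so $c = \hat{x}_0 - \hat{y}_0 = rM$ for some $M \in \mathbb{Z}$; hence $\hat{x}_n - \hat{y}_n = rM$ for all $n \in \ZD$, which is the claim.

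There is no genuine obstacle here beyond bookkeeping already carried out in Proposition \ref{prop:X_r_def}; the one subtle point --- that $\DHt$ is a bona fide function of $x$, which rests on the four-cycle identity \eqref{eq:4_cycle_ind} and hence on $r \ne 1,2,4$ --- has been settled there. If one prefers an argument not explicitly invoking $\DHt$, one can reason directly: fixing a base point $n_0$, any two preimages that agree mod $r$ carry the same increments $[x_{n_i} - x_{n_{i-1}}]$ along every edge of $\ZD$, so $\hat{x} - \hat{y}$ is constant along edges, hence constant on the connected graph $\ZD$, and it is congruent to $0$ mod $r$ at $n_0$ because $\hat{x}$ and $\hat{y}$ lift the same point $x$.
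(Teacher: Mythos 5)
Your proof is correct and follows essentially the same route as the paper's: both use \eqref{eq:DHT_diff_lift} to show $\hat{x}_n-\hat{y}_n$ is the constant $\hat{x}_0-\hat{y}_0$, and the congruence $\hat{x}_0\equiv\hat{y}_0\pmod r$ to identify that constant as $rM$. The only difference is presentational (you establish constancy first, then divisibility by $r$, whereas the paper sets $M$ first); the substance is identical.
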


\begin{proof}
Let $x \in X_r$, $\hat{x},\hat{y} \in Ht$ satisfy $\phi_r(\hat{x})=\phi_r(\hat{y})=x$.
We have
$$\hat{x}_0 \equiv \hat{y}_0 \equiv x_0 \mod r.$$
Thus there exists $M \in \mathbb{Z}$ so that $\hat{x}_0- \hat{y}_0 =rM$.
By \eqref{eq:DHT_diff_lift} it follows that for all $n\in \ZD$
$$\hat{x}_n -\hat{x}_0 = \hat{y}_n-\hat{y}_0 = \DHt(x,n).$$
It follows that for all $n \in \ZD$
$$\hat{x}_n- \hat{y}_n = \hat{x}_0- \hat{y}_0 = rM.$$
\end{proof}


\begin{lemma}\label{lem:homoclinic_heights}
Fix any $(x,y) \in \Delta_{X_r}$ and a finite $F \subset \ZD$ so that $x_n=y_n$ for all $n \in \ZD \setminus F$.
\begin{enumerate}
\item{There exists a finite set $\tilde F$ so that for all $\hat{x},\hat{y} \in Ht$ such
that $\phi_r(\hat{x})=x$ and $\phi_r(\hat{y})=y$,
there exists $M \in \mathbb{Z}$ so that $\hat{x}_n - \hat{y}_n= rM$ for all $n \in \ZD \setminus \tilde F$.}
\item{We can choose $\hat{x}, \hat{y} \in Ht$ so that $M=0$, that is, $(\hat{x},\hat{y}) \in \Delta_{Ht}$.}
\item{If $(\hat{x}',\hat{y}'),(\hat{x},\hat{y}) \in \Delta_{Ht}$ and $\phi_r(\hat{x}')=\phi_r(\hat{x})=x$,
$\phi_r(\hat{y}')=\phi_r(\hat{y})=y$ then there exists $M \in \Z$ so that for all $n \in \ZD$, $\hat{y}'_n=\hat{y}_n+rM$ and $\hat{x}'_n=\hat{x}_n+rM$. }
\item{\label{item:homo4}For any $(\hat{x},\hat{y}) \in \Delta_{Ht}$ and all $n \in \Z^d$, $(\hat{x}_n - \hat{y}_n) \in 2\mathbb{Z}$.}
\item{\label{item:homo5} If $x,y \in X_r$ satisfy $x_n=y_n$ for all $n \in \Z^d \setminus \{n_0\}$ and $x_{n_0}\ne y_{n_0}$ then there exist $\hat{x}, \hat{y} \in Ht$ such that $\phi_r(\hat{x})=x, \phi_r(\hat{y})=y$ and
\begin{equation*}
|\hat{x}_n-\hat{y}_n|=\begin{cases}2\text{ if } n=n_0\\ 0 \text{ otherwise .}\end{cases}
\end{equation*}}
\end{enumerate}
\end{lemma}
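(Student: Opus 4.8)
The plan is to reduce everything to three ingredients. First, if $x \in X_r$ and $\hat{x} \in \Ht$ lifts $x$ (i.e. $\phi_r(\hat{x})=x$), then for every edge $\{n,m\}$ of $\ZD$ we have $\hat{x}_n-\hat{x}_m=[x_n-x_m]$: indeed $\hat{x}_n-\hat{x}_m\in\{\pm1\}$ by definition of $\Ht$, it is congruent to $x_n-x_m$ mod $r$, by Proposition~\ref{prop:X_r_def} we know $x_n-x_m\equiv\pm1\bmod r$, and the two values $\pm1$ stay distinct mod $r$ since $r\ne 2$, so the sign is pinned down. Second, Lemma~\ref{prop:height_well_defined}: two lifts of the same point of $X_r$ differ by a constant in $r\Z$, and adding such a constant to a height function produces another height function. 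Third, the elementary fact that for $d\ge 2$ and finite $F\subset\ZD$ the induced subgraph on $\ZD\setminus F$ has a single infinite connected component $C_\infty$, so that $\tilde F:=\ZD\setminus C_\infty$ is finite and contains $F$.

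For item (1) I would fix lifts $\hat{x},\hat{y}$ and set $g(n):=\hat{x}_n-\hat{y}_n$. By the first ingredient, whenever $n,m$ are adjacent vertices of $C_\infty$ (hence outside $F$, so $x_n=y_n$ and $x_m=y_m$) we get $\hat{x}_n-\hat{x}_m=[x_n-x_m]=[y_n-y_m]=\hat{y}_n-\hat{y}_m$, i.e. $g(n)=g(m)$; so $g$ is locally constant on the connected set $C_\infty$, hence equal there to a constant which lies in $r\Z$ because $\hat{x}_n\equiv x_n=y_n\equiv\hat{y}_n\bmod r$ on $C_\infty$. This gives (1) with $\tilde F=\ZD\setminus C_\infty$. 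For (2), take lifts as in (1) with $g\equiv rM$ on $C_\infty$ and replace $\hat{y}$ by $\hat{y}+rM$, which is again a lift of $y$ and now agrees with $\hat{x}$ off the finite set $\tilde F$, so the pair lies in $\Delta_{Ht}$. For (3), use Lemma~\ref{prop:height_well_defined} to write $\hat{x}'=\hat{x}+rM_1$ and $\hat{y}'=\hat{y}+rM_2$; picking any $n$ outside the (finite) disagreement sets of both homoclinic pairs gives $\hat{x}_n=\hat{y}_n$ and $\hat{x}'_n=\hat{y}'_n$ simultaneously, whence $rM_1=rM_2$ and $M:=M_1=M_2$ works.

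For item (4) I would use that a height function changes by $\pm1$ along each edge, so $\hat{x}_n\equiv\hat{x}_0+\|n\|_1\bmod 2$ and likewise for $\hat{y}$; hence $\hat{x}_n-\hat{y}_n\equiv\hat{x}_0-\hat{y}_0\bmod 2$ for all $n$, and since this difference vanishes for all but finitely many $n$ it must be even, hence in $2\Z$, everywhere. For item (5), note that any neighbour $m$ of $n_0$ has $y_m=x_m$, while $x_{n_0}$ and $y_{n_0}$ are distinct and each differs from $x_m$ by $\pm1\bmod r$, forcing $\{x_{n_0},y_{n_0}\}=\{x_m+1,x_m-1\}\bmod r$. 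If two neighbours of $n_0$ received opposite roles we would obtain $x_{n_0}-2\equiv x_{n_0}+2\bmod r$, i.e. $r\mid 4$, which is impossible; so there is a single sign $\epsilon\in\{\pm1\}$ with $x_{n_0}-x_m\equiv\epsilon\bmod r$ and $y_{n_0}-x_m\equiv-\epsilon\bmod r$ for every neighbour $m$. Picking any lift $\hat{x}$ of $x$, the first ingredient forces $\hat{x}_m=\hat{x}_{n_0}-\epsilon$ for all neighbours $m$ of $n_0$. Define $\hat{y}$ to equal $\hat{x}$ off $n_0$ and set $\hat{y}_{n_0}:=\hat{x}_{n_0}-2\epsilon$; then $\hat{y}_{n_0}-\hat{y}_m=-\epsilon\in\{\pm1\}$ so $\hat{y}\in\Ht$, and $\hat{y}_{n_0}\equiv x_{n_0}-2\epsilon\equiv y_{n_0}\bmod r$ so $\phi_r(\hat{y})=y$; finally $|\hat{x}_n-\hat{y}_n|$ equals $2$ at $n_0$ and $0$ elsewhere.

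The genuinely delicate point is item (5): the construction collapses unless $x_{n_0}-x_m$ is the same for every neighbour $m$ of $n_0$, and it is precisely the standing hypothesis $r\ne 4$ (already essential in Proposition~\ref{prop:X_r_def} and Lemma~\ref{prop:height_well_defined}) that rules out the alternative. The only other step needing real care is the topological observation in the third ingredient, which powers items (1)--(3); everything else is routine checking that the modified functions remain in $\Ht$ and still project to the intended configurations.
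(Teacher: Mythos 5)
Your proof is correct and follows essentially the same strategy as the paper's. Items (1)--(4) match the paper's argument almost line for line, the only cosmetic difference being that you phrase the well-definedness of the height-lift as a direct ``local constancy'' check on the edges of the infinite complementary component, where the paper routes the same fact through its $\DHt$ cocycle notation (equations \eqref{eq:Dht_generators}--\eqref{eq:DHT_diff_lift}). For item (5) you take a slightly different, more constructive route: you first prove (via the ``same sign $\epsilon$ at every neighbour'' observation, killed by $r\nmid 4$) that the lift of $y$ can be written down explicitly as $\hat{x}$ lowered or raised by $2$ at $n_0$, whereas the paper first invokes items (1)--(2) with $F=\{n_0\}$ to produce lifts agreeing off $n_0$ and then computes the jump $|\hat{x}_{n_0}-\hat{y}_{n_0}|=2$ from a single neighbour $m=n_0+e_1$. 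Both are valid; yours makes the role of the hypothesis $r\ne 4$ more visible, at the cost of a small extra argument (sign consistency over all $2d$ neighbours) that the paper gets for free from the earlier items. No gaps.
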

\begin{proof}

\begin{enumerate}
\item
Choose $\hat{x} \in \phi_r^{-1}(x)$ and $\hat{y} \in \phi_r^{-1}(y)$.
Since the set $F$ is finite, there is an infinite connected component $\tilde A \subset \ZD \setminus F $ in the standard Cayley graph of $\ZD$ so that $\tilde F:= \ZD \setminus \tilde A$ is finite.
Fix some $n_0 \in \tilde{A}$. For any $n \in \tilde{A}$ choose a path $n_0,n_1\ldots,n_k=n\in \tilde A$ from $n_0$ to $n$ in the standard Cayley graph of $\mathbb{Z}^d$, that is, $n_{j+1}-n_{j} \in \{\pm e_1,\ldots,\pm e_d\}$ for all $0\leq j \leq k-1$ and $x_{n_j}= y_{n_j}$ for all $0\leq j \leq k$.
Using \eqref{eq:Dht_generators} and \eqref{eq:Ht_coycle} we conclude that
$$\DHt(\sigma_{n_0}(x),n- n_0)=\DHt(\sigma_{n_0}(y),n-n_0).$$
It now follows using \eqref{eq:DHT_diff_lift} that
$$\hat{x}_n - \hat{x}_{n_0} = \hat{y}_n -\hat{y}_{n_0}.$$
Because $x_{n_0}=y_{n_0}$, $\hat{x}_{n_0} - \hat{y}_{n_0} \in r\Z$, and so there exists $M \in \Z$ so that $\hat{x}_n -\hat{y}_n = rM$ for all $n \in \tilde{A}= \ZD \setminus \tilde F$.
\item
Define $\hat{z}$ by
$$\hat{z}_n:= \hat{y}_n - (\hat{y}_{n_0} -\hat{x}_{n_0}) \mbox{ for all } n\in \ZD.$$
Obviously $\hat{z} \in Ht$.
Since $x_{n_0}=y_{n_0}$ it follows that $\hat{y}_{n_0} -\hat{x}_{n_0} \in r\Z$. Thus $\phi_r(\hat{z})=\phi_r(\hat{y})=y$.
Also $\hat{z}_n=\hat{y}_n$ for all $n \in \tilde{A}$. Thus, $(\hat{x},\hat{z}) \in \Delta_{Ht}$, proving the second assertion.
\item
By Lemma \ref{prop:height_well_defined}, for any choice of $\hat{x}' \in \phi_r^{-1}(x)$ there exists $M_1 \in \Z$ so that $\hat{x}'_n = \hat{x}_n + rM_1$ for all $n \in \Z^d$.
Similarly, for any choice of $\hat{y}' \in \phi_r^{-1}(y)$ there exists $M_2 \in \Z$ so that $\hat{y}'_n = \hat{y}_n + rM_2$ for all $n \in \Z^d$.
But if $(\hat{x}, \hat{y}), (\hat{x}',\hat{y}') \in \Delta_{Ht}$ it follows that $M_1=M_2$. This proves the third assertion.

\item From \eqref{eq:Dht_generators} it follows that $\DHt(x,\pm e_j)= 1 \mod 2$ for all $x \in X_r$ and $j\in \{1,\ldots,d\}$. Thus by \eqref{eq:Ht_coycle} the parity of $\DHt(x,n)$ is equal to the parity of $\|n\|_1$ and does not depend on $x$. Therefore if $\hat{x}_{n_0} = \hat{y}_{n_0}$ for some $n_0 \in \ZD$, from \eqref{eq:DHT_diff_lift} it follows that
$\hat{x}_n -\hat{y}_n \in 2\Z$ for all $n \in \ZD$.

\item Suppose that $x,y \in X_r$ satisfy $x_n=y_n$ for all $n \in \Z^d \setminus \{n_0\}$ and $x_{n_0}\ne y_{n_0}$.
Choose $(\hat{x},\hat{y}) \in \Delta_{Ht}$ so that $\phi_r(\hat{x})=x$ and $\phi_r(\hat{y})=y$.
By the argument above $\hat{x}_n=\hat{y}_n$ for all $n \ne n_0$.
Let $m := n_0 + e_1$. Since $x_m=y_m$ and $x_{n_0} \ne y_{n_0}$ it follows that
$[x_{n_0}-x_m] \ne [y_{n_0}-y_m]$, thus
$$|[x_{n_0}-x_m] - [y_{n_0}-y_m]| =2.$$
Using \eqref{eq:Dht_generators} and \eqref{eq:DHT_diff_lift}, we conclude that
$|\hat{x}_{n_0}-\hat{y}_{n_0}|=2$.
\end{enumerate}
\end{proof}

It is a known and useful fact that the $3$-colored chessboard has the pivot property. This can be shown, for instance,
using height functions. Essentially the same argument shows that $X_r$ has the pivot property for all $r \in\mathbb{N}\setminus \{1,2,4\}$. We include a short proof below. Similar arguments appear in the proofs of certain claims in the subsequent sections.

\begin{prop}\label{prop:X_r_pivot_property}
For any $d \ge 2$ and $r \in \mathbb{N}\setminus \{1,2,4\}$ the subshift $X_r$ has the pivot property. In other words, given any $(x,y) \in \Delta_{X_r}$ there exist
$x=z^{(0)},z^{(1)},\ldots,z^{(N)}=y \in X_r$ such that for all $0\le k < N$, there is a unique $n_k \in \mathbb{Z}^d$ for which $z^{(k)}_{n_k} \ne z^{(k+1)}_{n_k}$.
\end{prop}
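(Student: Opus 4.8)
The plan is to lift the homoclinic pair $(x,y) \in \Delta_{X_r}$ to a homoclinic pair of height functions and carry out the pivoting at the level of height functions, where the combinatorial structure is more transparent. By Lemma \ref{lem:homoclinic_heights}(2) we may choose $\hat x, \hat y \in Ht$ with $\phi_r(\hat x) = x$, $\phi_r(\hat y) = y$ and $(\hat x, \hat y) \in \Delta_{Ht}$. Let $F := \{ n \in \ZD ~:~ \hat x_n \ne \hat y_n\}$, a finite set; by Lemma \ref{lem:homoclinic_heights}(\ref{item:homo4}), $\hat x_n - \hat y_n \in 2\Z$ for every $n$. The natural quantity to induct on is $\Phi(\hat x, \hat y) := \sum_{n \in \ZD} |\hat x_n - \hat y_n| \ge 0$, which is finite. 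If $\Phi = 0$ then $\hat x = \hat y$, hence $x = y$ and there is nothing to prove. Otherwise I want to produce a single pivot move $x = z^{(0)} \to z^{(1)}$ in $X_r$, lifted from a height-function pivot $\hat x \to \hat z^{(1)}$ with $(\hat z^{(1)}, \hat y) \in \Delta_{Ht}$ and $\Phi(\hat z^{(1)}, \hat y) < \Phi(\hat x, \hat y)$; iterating then terminates, and applying $\phi_r$ throughout gives the required chain of pivots in $X_r$.

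The key step is finding a site where the height function can be legally decremented (or incremented) toward $\hat y$. Assume without loss of generality that $\max_{n} (\hat x_n - \hat y_n) = 2L > 0$ and let $U := \{ n : \hat x_n - \hat y_n = 2L\}$, a nonempty finite set. I claim there is a site $n_0 \in U$ at which I can change $\hat x_{n_0}$ from its current value to $\hat x_{n_0} - 2$ and still have a height function. Since $\hat x \in Ht$, for each neighbor $m$ of $n_0$ we have $\hat x_m = \hat x_{n_0} \pm 1$; the move $\hat x_{n_0} \mapsto \hat x_{n_0} - 2$ is legal precisely when every neighbor $m$ satisfies $\hat x_m = \hat x_{n_0} - 1$. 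Consider a site $n_0 \in U$ maximizing $\hat x_{n_0}$ among $U$. For a neighbor $m$ of $n_0$: if $m \in U$ then $\hat x_m \le \hat x_{n_0}$, so $\hat x_m = \hat x_{n_0} - 1$ by the height constraint; if $m \notin U$, then $\hat x_m - \hat y_m \le 2L - 2$ (it cannot exceed $2L$ and has the same parity constraints relative to $U$), and since $\hat x_{n_0} - \hat y_{n_0} = 2L$ while $\hat y_m = \hat y_{n_0} \pm 1$ and $\hat x_m = \hat x_{n_0} \pm 1$, a short parity/inequality check forces $\hat x_m = \hat x_{n_0} - 1$. Hence the decrement at $n_0$ is legal; call the result $\hat z^{(1)}$. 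Then $\hat z^{(1)} \in Ht$, it differs from $\hat x$ only at $n_0$, and $|\hat z^{(1)}_{n_0} - \hat y_{n_0}| = 2L - 2 < 2L = |\hat x_{n_0} - \hat y_{n_0}|$, so $\Phi$ strictly drops; also $(\hat z^{(1)}, \hat y) \in \Delta_{Ht}$ since $\hat z^{(1)}$ agrees with $\hat x$ off the finite set $F$. Finally $z^{(1)} := \phi_r(\hat z^{(1)})$ differs from $x = \phi_r(\hat x)$ only at $n_0$, and $z^{(1)}_{n_0} \ne x_{n_0}$ because $\hat z^{(1)}_{n_0} = \hat x_{n_0} - 2$ and, by Lemma \ref{lem:homoclinic_heights}(\ref{item:homo5})-type reasoning (or directly, since $r \ne 1,2,4$ so $2 \not\equiv 0 \bmod r$), these differ mod $r$ — this is where the exclusion $r \ne 2$ is used (and $r \ne 1,4$ is needed for $X_r$ to be the nice SFT of Proposition \ref{prop:X_r_def} and a TMF in the first place).

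The main obstacle I anticipate is the case analysis verifying that the decrement at the chosen $n_0$ is legal — specifically handling neighbors $m \notin U$ and making sure no neighbor sits at height $\hat x_{n_0} + 1$. The choice of $n_0$ as a site in $U$ of maximal height is designed precisely to rule this out: a neighbor at height $\hat x_{n_0} + 1$ would either lie in $U$ (contradicting maximality) or lie outside $U$ with $\hat y$-value forced by parity into a position that contradicts $\max(\hat x - \hat y) = 2L$. Once this local lemma is nailed down, the induction on $\Phi$ and the passage back through $\phi_r$ are routine, and termination is immediate since $\Phi$ is a nonnegative integer that strictly decreases at each step.
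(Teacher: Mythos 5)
Your proof is correct and follows essentially the same route as the paper: lift to height functions via Lemma \ref{lem:homoclinic_heights}, induct on $D = \sum_n |\hat{x}_n - \hat{y}_n|$, locate a site where $\hat x$ can be legally decremented by $2$ toward $\hat y$, and push the pivot back down through $\phi_r$. The only deviation is your choice of pivot site: you first restrict to $U$, the set where $\hat x - \hat y$ attains its maximal value $2L$, and then maximize $\hat x$ over $U$; the paper simply maximizes $\hat x$ over all of $F_+ = \{n : \hat x_n > \hat y_n\}$. Both choices yield a local maximum of $\hat x$ among $F_+$, and the verification that no neighbor $m$ of $n_0$ can have $\hat x_m = \hat x_{n_0}+1$ is analogous in either case (the paper's is slightly shorter: if $m \notin F_+$ then $\hat x_m \le \hat y_m \le \hat y_{n_0}+1 \le \hat x_{n_0}-1$, a contradiction). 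You also make explicit, which the paper leaves implicit, that $2 \not\equiv 0 \bmod r$ (hence $r\ne 1,2$) is what ensures $\phi_r(\hat z^{(1)}) \ne \phi_r(\hat x)$, so the step really is a pivot move rather than a trivial one — a worthwhile observation. No gap.
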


\begin{proof}
Fix $(x,y) \in \Delta_{X_r}$. By Lemma \ref{lem:homoclinic_heights}, we can choose $(\hat{x},\hat{y}) \in \Delta_{Ht}$ with $\phi_r(\hat{x})=x$ and $\phi_r(\hat{y})=y$. We will proceed by induction on $D = \sum_{n \in \mathbb{Z}^d}|\hat{x}_n-\hat{y}_n|$. Note that by Lemma \ref{lem:homoclinic_heights} $D$ is well defined, that is, the differences $(\hat{x}_n-\hat{y}_n)$ in the sum do not depend on the choice of $(\hat{x},\hat{y})$.
When $D=0$, then $x=y$ and the claim is trivial. Now, suppose $D >0$. Let
\begin{eqnarray*}
F_+& =& \{ n \in \mathbb{Z}^d~:~ (\hat{x}_n-\hat{y}_n) > 0 \}\\
F_- &=& \{ n \in \mathbb{Z}^d~:~ (\hat{x}_n-\hat{y}_n) < 0 \},
\end{eqnarray*} that is, $F_+ \subset \mathbb{Z}^d$ is the finite set of sites where $\hat{x}$ is strictly above $\hat{y}$ and $F_- \subset \mathbb{Z}^d$ is the finite set of sites where $\hat{y}$ is strictly above $\hat{x}$. Without loss of generality assume that $F_+$ is non-empty. Since $(\hat{x}_n-\hat{y}_n) \in 2\mathbb{Z}$, $\hat{x}_n-\hat{y}_n \ge 2$ for all $n \in F_+$. Consider $n_0\in F_+$ such that $\hat{x}_{n_0}=\max\{\hat{x}_n ~:~ n\in F_+\}$. It follows that $\hat{x}_{n_0} -\hat{x}_m =1$ for all $m$ neighboring $n_0$. We can thus define $\hat{z} \in Ht$ which is equal to $\hat{x}$ everywhere except at $n_0$, where $\hat{z}_{n_0}=\hat{x}_{n_0}-2$. Now set $z^{(1)}= \phi_r(\hat{z})$ and apply the induction hypothesis on $(z^{(1)},y)$.
\end{proof}

\section{ Markov cocycles on $X_r$}\label{sec:X_r_Markov_specifications}

Our goal in the current section is to describe the space of shift-invariant Markov cocycles on $X_r$, when $r \in \mathbb{N} \setminus \{1,2,4\}$, and the subspace of Gibbs cocycles for shift-invariant nearest neighbor interactions.

In the following we assume $r \in \mathbb{N} \setminus \{1,2,4\}$ and $d >1$, unless explicitly stated otherwise.
\begin{lemma}\label{lem:height_diff_markovian}
Let $F\subset \Z^d$ be a finite set and $x,y,z,w\in X_r$ such that $x|_{F^c}= y|_{F^c}$, $z|_{F^c}=w|_{F^c}$ and $x|_{F \cup \partial F} = z|_{F \cup \partial F}$, $y|_{F \cup \partial F} = w|_{F \cup \partial F}$. Consider $(\hat{x},\hat{y}), (\hat{z},\hat{w})\in \Delta_{Ht}$ such that they are mapped by $\phi_r$ to the pairs $(x,y), (z,w) \in \Delta_{X_r}$ respectively. Then $\hat{x}_n-\hat{y}_n= \hat{z}_n -\hat{w}_n$ for all $n \in \mathbb{Z}^d$.
\end{lemma}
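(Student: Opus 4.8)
The plan is to express everything through the cocycle $\DHt$ from Section \ref{sec:Zr_homomorphisms} and the height-difference functions $g(n) := \hat{x}_n - \hat{y}_n$ and $h(n) := \hat{z}_n - \hat{w}_n$; the goal is to show $g \equiv h$ on $\ZD$. First I would use \eqref{eq:DHT_diff_lift} together with Lemma \ref{prop:height_well_defined} (which makes $\hat{x}_n - \hat{x}_0$ independent of the chosen lift) to write $\hat{x}_n = \hat{x}_0 + \DHt(x,n)$, and similarly for $\hat{y}, \hat{z}, \hat{w}$, so that
\begin{equation*}
g(n) - h(n) = \big[(\hat{x}_0 - \hat{y}_0) - (\hat{z}_0 - \hat{w}_0)\big] + \big(\DHt(x,n) - \DHt(y,n)\big) - \big(\DHt(z,n) - \DHt(w,n)\big).
\end{equation*}

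The core step — and the only one with any content — will be to prove the identity $\DHt(x,n) - \DHt(y,n) = \DHt(z,n) - \DHt(w,n)$ for all $n \in \ZD$. Here I would exploit path-independence of $\DHt$, i.e. \eqref{eq:indepedence_path_general}: fixing a single lattice path $0 = p_0, p_1, \ldots, p_\ell = n$ and evaluating all four cocycles along it reduces the identity to the edge-wise claim
\begin{equation*}
[x_{p_j} - x_{p_{j-1}}] - [y_{p_j} - y_{p_{j-1}}] = [z_{p_j} - z_{p_{j-1}}] - [w_{p_j} - w_{p_{j-1}}]
\end{equation*}
for each $j$. This I would check by cases on the edge $e = \{p_{j-1},p_j\}$, using that the bracket $[\cdot]$ depends only on the values at its two endpoints (by \eqref{eq:brk_def}): if neither endpoint of $e$ lies in $F$, then $x$ and $y$ agree on $e$ (since $x|_{F^c} = y|_{F^c}$) and likewise $z$ and $w$, so both sides vanish; if some endpoint of $e$ lies in $F$, then the other endpoint lies in $F$ or is a neighbour of $F$ lying outside $F$, hence in $\partial F$, so both endpoints of $e$ lie in $F \cup \partial F$, and then the hypotheses $x|_{F\cup\partial F} = z|_{F\cup\partial F}$ and $y|_{F\cup\partial F} = w|_{F\cup\partial F}$ give $[x_{p_j}-x_{p_{j-1}}] = [z_{p_j}-z_{p_{j-1}}]$ and $[y_{p_j}-y_{p_{j-1}}] = [w_{p_j}-w_{p_{j-1}}]$, so the two sides again agree.

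Granting the core step, $g - h$ equals the constant $c := (\hat{x}_0 - \hat{y}_0) - (\hat{z}_0 - \hat{w}_0)$, independent of $n$. To finish I would observe that $g$ and $h$ are finitely supported, because $(\hat{x},\hat{y})$ and $(\hat{z},\hat{w})$ lie in $\Delta_{Ht}$; hence $g - h$ is a finitely supported constant function on $\ZD$, forcing $c = 0$ and $g \equiv h$, which is the assertion. The main obstacle is purely the bookkeeping in the edge-case analysis, and the observation that dispatches it is that every edge meeting $F$ has both endpoints in $F \cup \partial F$, so that no connectivity properties of $F^c$ are ever needed.
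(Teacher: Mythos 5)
Your proof is correct, and it takes a genuinely different route from the paper's. The paper argues by induction on graph distance to the \emph{infinite connected component} $F_0$ of $F^c$: the base case is trivial on $F_0$ (where everything agrees), and the inductive step propagates the equality $\hat{x}_n - \hat{y}_n = \hat{z}_n - \hat{w}_n$ from a closer neighbour $m$ to $n$, using exactly the same edge dichotomy you identify (an edge meeting $F$ has both endpoints in $F\cup\partial F$; otherwise both endpoints are in $F^c$). Your version sidesteps the choice of $F_0$ and the induction entirely: you telescope $\DHt$ along an arbitrary lattice path, reduce to the same edge-wise claim, conclude that $g-h$ is a \emph{constant} on $\ZD$, and then pin that constant down to $0$ by the finite support of $g$ and $h$ coming from $(\hat{x},\hat{y}),(\hat{z},\hat{w})\in\Delta_{Ht}$. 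Both proofs share the same core observation; what your approach buys is freedom from any connectivity reasoning about $F^c$ (no need to locate the unique infinite component), replaced by the cheap observation that a finitely supported constant function on an infinite group vanishes. One cosmetic point: citing Lemma \ref{prop:height_well_defined} is not strictly needed once you have fixed the four lifts, since \eqref{eq:DHT_diff_lift} already gives $\hat{x}_n = \hat{x}_0 + \DHt(x,n)$ directly for each fixed lift; but it does no harm.
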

\begin{proof}
Let $F_0 \subset \mathbb{Z}^d$ denote the infinite connected component of $F^c$. For $n\in F_0$, we clearly have $\hat{x}_n-\hat{y}_n=\hat{z}_n-\hat{w}_n=0$. We can now prove by induction on the distance from $n \in \mathbb{Z}^d$ to $F_0$ that
$\hat{x}_n-\hat{y}_n=\hat{z}_n-\hat{w}_n$. Given $n \in \mathbb{Z}^d \setminus F_0$, find a neighbor $m$ of $n$ which is closer to $F_0$. By the induction hypothesis, $\hat{x}_m-\hat{y}_m=\hat{z}_m-\hat{w}_m$.

If either $n \in F$ or $m \in F$, then both $m$ and $n$ are in $F \cup \partial F$ and so $x_n-x_m=z_n-z_m$ and $y_n-y_m=w_n-w_m$. \eqref{eq:brk_def} and \eqref{eq:DHT_diff_lift} imply $\hat{x}_n-\hat{x}_m=\hat{z}_n-\hat{z}_m$ and $\hat{y}_n-\hat{y}_m=\hat{w}_n-\hat{w}_m$.
Subtracting the equations and applying the induction hypothesis, we conclude in this case that $\hat{x}_n-\hat{y}_n=\hat{z}_n-\hat{w}_n$.

Otherwise, $n,m \in F^c$ and so $x_n-x_m=y_n-y_m$ and $z_n-z_m=w_n-w_m$, and again by \eqref{eq:brk_def} and \eqref{eq:DHT_diff_lift} we conclude that $\hat{x}_n-\hat{y}_n=\hat{z}_n-\hat{w}_n$.
\end{proof}

For $i \in \mathbb{Z}_r$ and integers $a,b$ with $a -b \in 2\mathbb{Z}$, let
$$N_i(a,b) := \begin{cases}
|\{ m \in (2\mathbb{Z}+a)\cap(r\mathbb{Z}+i)~:~ m \in [a,b)\}| & \mbox{ if } a\le b\\
-N_i(b,a) & \mbox{otherwise}
\end{cases}
$$
Here $N_i$ is the ``net'' number of crossings from $(i + r\mathbb{Z})$ to $(i+2+ r\mathbb{Z})$ in a path going from $a$ to $b$ in steps of magnitude $2$. Note that
\begin{equation}\label{crossingcount1}
N_{i}(a,b)=N_i(a+rn,b+rn)
\end{equation}
for all $a, b, n \in \Z$ and
\begin{equation}\label{crossingcount2}
N_i(a, b)= N_i(a+c, b+c)
\end{equation}
for all $a, b,c \in \Z$ such that $a-b\in r\Z\cap2\Z$ .

\begin{prop}\label{prop:X_r_markov_cocycles}
For any $r \in \mathbb{N} \setminus \{1,2,4\}$, the space $\mathcal{M}^{\sigma}_{X_r}$ of shift-invariant Markov cocycles on $X_r$ has a linear basis
$$\{ M_0, M_1,\ldots,M_{r-1}\},$$
where the cocycle $M_i$ is given by
\begin{equation}\label{eq:Markov_cocycles_basis}
M_i(x,y) := \sum_{n \in \mathbb{Z}^d} N_i(\hat{x}_n,\hat{y}_n);
\end{equation}
$(\hat{x},\hat{y}) \in \Delta_{Ht}$ is any pair which is mapped to $(x,y)$ by $\phi_r$. In particular $\dim( \mathcal M^\sigma_{X_r})=r$.

\end{prop}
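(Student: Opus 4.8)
The plan is to verify the claim in three stages: (i) the $M_i$ are well-defined shift-invariant Markov cocycles, (ii) they are linearly independent, and (iii) they span $\mathcal{M}^\sigma_{X_r}$.

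For stage (i), I would first note that for $(x,y)\in\Delta_{X_r}$ and any lift $(\hat{x},\hat{y})\in\Delta_{Ht}$ (which exists by Lemma \ref{lem:homoclinic_heights}(2)), the sum $\sum_n N_i(\hat{x}_n,\hat{y}_n)$ has only finitely many nonzero terms, so $M_i$ is defined. Independence of the lift follows from Lemma \ref{prop:height_well_defined} together with \eqref{crossingcount1}: replacing $(\hat{x},\hat{y})$ by $(\hat{x}+rM,\hat{y}+rM)$ leaves each $N_i(\hat{x}_n,\hat{y}_n)$ unchanged. The cocycle identity $M_i(x,z)=M_i(x,y)+M_i(y,z)$ reduces to the additivity $N_i(a,b)+N_i(b,c)=N_i(a,c)$ for $a,b,c$ pairwise differing by even integers, which is immediate from the counting definition (using Lemma \ref{lem:homoclinic_heights}(4) to guarantee the relevant differences are even); one does need compatible lifts, which Lemma \ref{lem:homoclinic_heights}(3) provides up to the harmless global shift by $rM$. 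Shift-invariance is clear since $N_i$ depends only on the pair of heights at a site. Finally, the Markov property: if $x|_{F^c}=y|_{F^c}$, then by Lemma \ref{lem:height_diff_markovian} the differences $\hat{x}_n-\hat{y}_n$ — and in fact, using \eqref{crossingcount2} after translating both heights by a common amount determined by $x|_{F\cup\partial F}$, the individual values $N_i(\hat{x}_n,\hat{y}_n)$ — are determined by $x|_{F\cup\partial F}$ and $y|_{F\cup\partial F}$. (Here one uses that $N_i(a,b)$ depends on $(a \bmod r, b\bmod r)$ and $(b-a)$, and both of these are recoverable from the local data via \eqref{eq:brk_def} and \eqref{eq:DHT_diff_lift}.)

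For stage (ii), linear independence, I would exhibit for each $i$ a pivot pair on which $M_i$ is nonzero while distinguishing the indices. Using Lemma \ref{lem:homoclinic_heights}(5), given a single-site change at $n_0$ with $x_{n_0}\ne y_{n_0}$ we get lifts with $|\hat{x}_{n_0}-\hat{y}_{n_0}|=2$ and agreement elsewhere; then $M_i(x,y)=N_i(\hat{x}_{n_0},\hat{y}_{n_0})\in\{0,\pm1\}$, and it equals $\pm1$ precisely for the one value of $i$ determined by which congruence class mod $r$ the pair $\{\hat{x}_{n_0},\hat{y}_{n_0}\}$ straddles. Choosing pivot moves realizing all $r$ possible "straddled" classes (as $x_{n_0}$ ranges over $\mathbb{Z}_r$, the pair $(x_{n_0},y_{n_0})=(j, j\pm1)$ realizes each class) produces $r$ pairs on which the matrix of values $\big(M_i$ evaluated at pair$\big)$ is, up to signs and reindexing, the identity — hence the $M_i$ are independent.

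For stage (iii), spanning, this is the step I expect to be the main obstacle. By Proposition \ref{finite pivot dimension} the space $\mathcal{M}^\sigma_{X_r}$ is finite-dimensional and, by the pivot property (Proposition \ref{prop:X_r_pivot_property}) and equation \eqref{cocyclepivot}, every shift-invariant Markov cocycle $M$ is determined by its values on pivot moves, i.e.\ by the finitely many numbers $M(\sigma_{-n_0}x,\sigma_{-n_0}y)$ indexed by the possible local pictures $(x|_{\{0\}\cup\partial\{0\}}, y|_{\{0\}\cup\partial\{0\}})$ of a pivot at the origin. So I must show: the constraints imposed by the cocycle identity \eqref{cocyclepivot} (consistency around closed loops of pivots) cut the dimension down to exactly $r$, and that the $r$-dimensional solution space is the one spanned by the $M_i$. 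Concretely, a pivot move at the origin in $X_r$ changes $x_0$ from some value $a$ to $a\pm 2$ in height terms, with the neighbors fixed; lifting to heights, the relevant "elementary" pivot is: heights $(\hat{x}_0,\hat{y}_0)=(h,h-2)$ with all neighbors equal to $h-1$. The value of $M$ on such a pivot can a priori depend on $h\bmod r$ (since that is the only residual local information — the configuration of neighbors being forced once we fix one neighbor's value and the "direction"), giving at most $r$ "free parameters" $c_0,\dots,c_{r-1}$; I would then check that the loop/consistency relations impose no further constraint among these (equivalently, produce $r$ independent cocycles, which is exactly stage (ii)), and conversely that there are no pivots carrying information beyond this residue — the latter following because Lemma \ref{lem:homoclinic_heights}(5) shows every single-site move in $X_r$ lifts to exactly this type of $\pm2$ height move. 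Matching parameters: $M_i$ assigns to the elementary pivot with heights straddling via $(h,h-2)$ the value $N_i(h,h-2)=-\mathbf{1}[h\equiv i+2 \bmod r]$ roughly, so the $M_i$ form a basis of the parameter space. Assembling: $\dim\mathcal{M}^\sigma_{X_r}\le r$ by the counting of free parameters, $\ge r$ by stage (ii), hence $=r$ with basis $\{M_0,\dots,M_{r-1}\}$.

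The delicate point throughout is bookkeeping the lift: heights are only defined up to a global constant (Lemma \ref{prop:height_well_defined}), and the parity constraint (Lemma \ref{lem:homoclinic_heights}(4)) is what makes $N_i$ well-behaved; I would be careful to state all identities in terms of \emph{differences} $\hat{x}_n-\hat{y}_n$ or in terms of the relations \eqref{crossingcount1}–\eqref{crossingcount2}, which are precisely engineered to absorb the ambiguity.
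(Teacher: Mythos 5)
Your proposal is correct and follows essentially the same route as the paper: well-definedness via \eqref{crossingcount1}, the Markov property via Lemma \ref{lem:height_diff_markovian} together with \eqref{crossingcount1}--\eqref{crossingcount2}, and spanning via the pivot property together with evaluation of the $M_i$ on the elementary single-site pivots $(x^{(i)},y^{(i)})$, where they give a Kronecker delta. The one small difference is in stage (iii): the paper dispenses with the ``loop/consistency'' concern entirely by directly verifying $M=\sum_i \alpha_i M_i$ on pivot moves (where $\alpha_i := M(x^{(i)},y^{(i)})$), which is cleaner than your two-sided dimension bound, though both arguments are valid and rest on the same two facts --- the pivot property reduces everything to single-site moves, and the $M_i$ evaluate to $\delta_{i,j}$ on the elementary pivots.
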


\begin{proof}
By Lemma \ref{lem:homoclinic_heights} different choices $(\hat{x},\hat{y}) \in \Delta_{Ht}$ which map to $(x,y)$ via $\phi_r$ differ by a fixed translation in $r\Z$. Thus by \eqref{crossingcount1} the values $M_i(x,y)$ are independent of the choice of the corresponding height functions $(\hat{x},\hat{y}) \in \Delta_{Ht}$ and hence are well defined.

We will show that for $i=0,\ldots,r-1$, $M_i$ is indeed a Markov cocycle. Since $N_i(a,c)=N_i(a,b)+N_i(b,c)$ whenever $a \equiv b \equiv c \mod 2$, it follows that $M_i(x,z)=M_i(x,y)+M_i(y,z)$ whenever $x,y,z \in X_r$ are homoclinic. Thus $M_i$ is a $\Delta_{X_r}$-cocycle. Clearly, $M_i$ is shift-invariant.

We now check that $M_i$ satisfies the Markov property. This is equivalent to showing that $M_i(x,y)=M_i(z,w)$
whenever $x,y,z,w \in X_r$ satisfy the assumption in Lemma \ref{lem:height_diff_markovian}. In this case by Lemma \ref{lem:height_diff_markovian}, $\hat{x}_n-\hat{y}_n= \hat{z}_n -\hat{w}_n$ for all $n \in \mathbb{Z}^d$. Also note that for any $n \in \mathbb{Z}^d$,
either $x_n = z_n$ and $y_n= w_n$ in which case $\hat{x}_n - \hat{z}_n=\hat{y}_n-\hat{w}_n \in r\mathbb{Z}$ or $x_n=y_n$ and $z_n=w_n$, in which case by Lemma \ref{lem:homoclinic_heights}
$\hat{x}_n - \hat{y}_n =\hat{z}_n - \hat{w}_n \in r\mathbb{Z}\cap 2\mathbb{Z}$. By \eqref{crossingcount1} and \eqref{crossingcount2} in either case $N_i(\hat{x}_n,\hat{y}_n)= N_i(\hat{z}_n,\hat{w}_n)$ and summing over the $n$'s, we get $M_i(x,y)=M_i(z,w)$ as required.

To complete the proof we need to show that all shift-invariant Markov cocycles on $X_r$ can be uniquely written as a linear combination of $M_0,\ldots,M_{r-1}$.
For $i \in \{0,\ldots, r-1\}$ let $(x^{(i)},y^{(i)}) \in \Delta_{X_r}$ such that $x^{(i)}_0=i$, $y^{(i)}_0=i+2 \mod r$ and $x^{(i)}_n = y^{(i)}_n$ for all $n \in \mathbb{Z}^d \setminus \{0\}$.
Given a shift-invariant Markov cocycle $M$, let
\begin{equation}\label{eq:alpha_i_def}
\alpha_i := M(x^{(i)},y^{(i)})
\end{equation}
We claim that for all $(x,y) \in \Delta_{X_r}$:
\begin{equation}\label{eq:M_lin_comb}
M(x,y) = \sum_{i=0}^{r-1} \alpha_i \cdot M_i(x,y).
\end{equation}
Since $X_r$ has the pivot property (Proposition \ref{prop:X_r_pivot_property}), by \eqref{cocyclepivot} it is sufficient to show that \eqref{eq:M_lin_comb} holds for all $(x,y) \in \Delta_{X_r}$ which differ only at a single site.
By shift-invariance of $M$ and the $M_i$'s it is further enough to show this for $(x,y)$ which differ only at the origin $0$. In this case, we note that $(x,y)$ coincide with either $(x^{(i)},y^{(i)})$ or $(y^{(i)},x^{(i)})$ on the sites $\{0\}\cup\partial \{0\}$ for some $i$. Without loss of generality assume that $(x,y)$ coincide with $(x^{(i_0)},y^{(i_0)})$ on the sites $\{0\}\cup\partial \{0\}$. Since $M$ and the $M_i$'s are Markov cocycles we have $M(x,y)=M(x^{(i_0)},y^{(i_0)})=\alpha_{i_0}$ and $$\sum_{j=0}^{r-1} \alpha_j \cdot M_j(x,y)= \sum_{j=0}^{r-1}\alpha_j M_j(x^{(i_0)},y^{(i_0)})=\sum_{j=0}^{r-1}\alpha_j\delta_{i_0,j}=\alpha_{i_0}.$$
\end{proof}

\textbf{Remark: } Without the assumption of shift-invariance, a similar argument shows that any Markov cocycles on $X_r$ is of the following form:
$$M(x,y)= \sum_{i=0}^{r-1}\sum_{n \in \mathbb{Z}^d} a_{i,n} N_i(\hat{x}_n,\hat{y}_n) \mbox{ with } a_{i,n} \in \mathbb{R} \mbox{ for all } n \in \mathbb{Z}^d,~ 0\le i \le r-1.$$

We now describe the space $\mathcal{G}^\sigma_{X_r}$ of Gibbs cocycles corresponding to shift-invariant nearest neighbor interactions for $X_r$.

\begin{prop}\label{prop:Gibbs_cocycles_X_r}
A shift-invariant Markov cocycle on $X_r$ is a Gibbs cocycle corresponding to a shift-invariant nearest neighbor interaction if and only if it is of the form
$M=\sum_{i=0}^{r-1} \alpha_i M_i$, with $\sum_{i=0}^{r-1} \alpha_i = 0$ and $M_i$'s as in Proposition \ref{prop:X_r_markov_cocycles} and $\alpha_0,\ldots,\alpha_{r-1}$ given by \eqref{eq:alpha_i_def}. In other words,
$$\mathcal{G}^\sigma_{X_r} = \left\{\sum_{i=0}^{r-1} \alpha_i M_i~:~ \sum_{i=0}^{r-1} \alpha_i = 0\right\}.$$
In particular, $\dim( \mathcal{G}^\sigma_{X_r})=r-1$.
\end{prop}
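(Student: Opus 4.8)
The strategy is to compute the Gibbs cocycle $M_\phi$ of an arbitrary shift-invariant nearest neighbor interaction $\phi$ on $X_r$ in the basis $\{M_0,\ldots,M_{r-1}\}$ from Proposition \ref{prop:X_r_markov_cocycles}, and show that the coefficients always sum to zero; conversely, that every linear combination with vanishing coefficient sum arises this way. By Proposition \ref{prop:X_r_markov_cocycles} any shift-invariant Markov cocycle is $M=\sum_{i=0}^{r-1}\alpha_i M_i$ with $\alpha_i = M(x^{(i)},y^{(i)})$, so the task reduces to identifying which tuples $(\alpha_0,\ldots,\alpha_{r-1})$ are realized by Gibbs cocycles. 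Since $\mathcal G^\sigma_{X_r}$ is a linear subspace and $\dim \mathcal M^\sigma_{X_r}=r$, it suffices to show (a) $\sum_i \alpha_i(M_\phi)=0$ for every shift-invariant nearest neighbor $\phi$, and (b) the map $\phi \mapsto M_\phi$ has image of dimension at least $r-1$, equivalently the $r$ values $\alpha_i(M_\phi)$ can be made to realize an $(r-1)$-dimensional space inside the hyperplane $\sum \alpha_i = 0$.

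\textbf{Computing $\alpha_i(M_\phi)$.} Recall $M_\phi(x,y)=\sum_{W}\phi(y|_W)-\phi(x|_W)$, and for a single-site pivot $(x^{(i)},y^{(i)})$ at the origin only the terms with $0 \in W$ survive, i.e. the single-site potential at $\{0\}$ and the $2d$ edge terms $\{0,\pm e_j\}$. Writing $a:=x^{(i)}_0=i$, $b:=y^{(i)}_0=i+2 \bmod r$, and noting that the neighbors of $0$ are all equal to $i+1 \bmod r$ in both configurations (this is forced: in $X_r$ a site's neighbors all differ from it by $\pm1$, and changing $i$ to $i+2$ while keeping the boundary fixed forces the common neighbor value to be $i+1$), one gets
\begin{equation*}
\alpha_i(M_\phi) = \big(\phi([b]_0)-\phi([a]_0)\big) + 2d\big(\phi([b,i+1]_1)-\phi([a,i+1]_1)\big),
\end{equation*}
where I used shift-invariance to collapse all $2d$ edge contributions (and the symmetry that edge interactions on $\{0,e_j\}$ and $\{0,-e_j\}$ give equal contributions, each reducible to the value on a single edge type). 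Here all arguments of $\phi$ are read modulo $r$. Summing over $i=0,\ldots,r-1$: the single-site part telescopes around the cyclic group $\mathbb Z_r$ and cancels (each value $\phi([j]_0)$ appears once with $+$ and once with $-$), and likewise the edge part $\sum_i \phi([i+2,i+1]_1)-\phi([i,i+1]_1)$ telescopes to zero because as $i$ ranges over $\mathbb Z_r$ the multiset of edges $\{i+2,i+1\}$ equals the multiset $\{i,i+1\}$ (both are just all edges of the Cayley graph of $\mathbb Z_r$). This gives $\sum_i \alpha_i(M_\phi)=0$, proving the forward inclusion $\mathcal G^\sigma_{X_r}\subset\{\sum\alpha_iM_i:\sum\alpha_i=0\}$.

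\textbf{Reverse inclusion.} It remains to produce enough Gibbs cocycles. The clean way is to exhibit, for each $k\in\{0,\ldots,r-1\}$, a shift-invariant nearest neighbor interaction $\phi^{(k)}$ whose cocycle is $M_k - M_{k+1}$ (indices mod $r$), since these $r-1$ independent differences span the hyperplane $\sum\alpha_i=0$. A natural candidate uses only a single-site potential: try $\phi^{(k)}([j]_0)$ equal to an indicator-type function of $j$ (mod $r$) and no edge interaction, then read off $\alpha_i = \phi^{(k)}([i+2]_0)-\phi^{(k)}([i]_0)$ from the displayed formula and solve the resulting linear system over $\mathbb Z_r$ — solvability here is exactly where the exclusion of $r\in\{2,4\}$ (and the fact that $\gcd$ of $2$ with odd $r$ is $1$, or more precisely that the map $j\mapsto j+2$ generates $\mathbb Z_r$ when $r$ is odd, while for even $r\ge 6$ one needs a small additional argument splitting into the two parity classes) plays its role. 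I expect the \textbf{main obstacle} to be exactly this realizability step for even $r$: when $r$ is even, adding $2$ does not generate $\mathbb Z_r$, so the map $(\phi([j]_0))_j \mapsto (\alpha_i)_i$ has a nontrivial structure and one must check its image is precisely the full hyperplane $\{\sum\alpha_i=0\}$ — possibly requiring edge interactions in addition to single-site potentials. Once realizability gives $\dim\mathcal G^\sigma_{X_r}\ge r-1$, combined with the forward inclusion and $\dim\mathcal M^\sigma_{X_r}=r$ we conclude $\mathcal G^\sigma_{X_r}$ equals the hyperplane and has dimension exactly $r-1$.
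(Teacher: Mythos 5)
Your overall plan is right and closely parallels the paper's proof, but both halves have concrete gaps.

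\textbf{Forward inclusion.} Your displayed formula
\[
\alpha_i(M_\phi) = \bigl(\phi([b]_0)-\phi([a]_0)\bigr) + 2d\bigl(\phi([b,i+1]_1)-\phi([a,i+1]_1)\bigr)
\]
is not correct for a general shift-invariant nearest neighbor interaction. A configuration on an oriented edge $\{0,e_j\}$ is an ordered pair, and there is no built-in symmetry $\phi([a,b]_j)=\phi([b,a]_j)$; moreover different directions $j$ may carry different interactions. The correct expansion must keep all $2d$ edge terms distinct:
\[
\alpha_i(M_\phi) = \phi([i+2]_0)-\phi([i]_0)+\sum_{j=1}^{d}\Bigl(\phi([i+2,i+1]_j)-\phi([i,i+1]_j)+\phi([i+1,i+2]_j)-\phi([i+1,i]_j)\Bigr).
\]
With your collapsed formula, the ``edge part'' does \emph{not} telescope: $\sum_i\phi([i+2,i+1]_1)$ runs over all decreasing edge patterns $(k,k-1)$ while $\sum_i\phi([i,i+1]_1)$ runs over all increasing edge patterns $(k,k+1)$ --- two different families. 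Your phrase ``the multiset of edges $\{i+2,i+1\}$ equals the multiset $\{i,i+1\}$'' silently treats edges as unordered, which is exactly the symmetry one cannot assume. In the correct expansion the cancellation happens only because, for each $j$, the four terms jointly contribute $\sum_k \phi([k,k-1]_j)-\sum_k\phi([k,k+1]_j)+\sum_k\phi([k,k+1]_j)-\sum_k\phi([k,k-1]_j)=0$ when summed over $i$. So the conclusion $\sum_i\alpha_i=0$ is true, but the reason you give does not establish it.

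\textbf{Reverse inclusion.} You propose to realize the hyperplane using only single-site potentials, then flag that this may fail for even $r$. That worry is justified and fatal for this route: if $\alpha_i=\psi(i+2)-\psi(i)$ with $\psi=\phi([\,\cdot\,]_0)$, then for even $r$ one also has $\sum_{i\text{ even}}\alpha_i=\sum_{i\text{ odd}}\alpha_i=0$ automatically, so the image sits in a codimension-$2$ subspace of $\mathbb{R}^r$, strictly smaller than the hyperplane $\{\sum\alpha_i=0\}$. Single-site potentials are therefore insufficient exactly when $r$ is even. The paper resolves this uniformly (for all $r\ne 1,2,4$) by using edge interactions instead: given $(\alpha_i)$ with $\sum_i\alpha_i=0$, set $\phi([i,i+1]_1):=-\sum_{k=i}^{r-1}\alpha_k$, and let all other interaction terms vanish. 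A short computation shows $M_\phi(x^{(i)},y^{(i)})=\alpha_i$, so this single prescription realizes the entire hyperplane and closes the argument without any case split on the parity of $r$.
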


\begin{proof}
Let $M$ be a Gibbs cocycle given by a shift-invariant nearest neighbor interaction $\phi$.
Choose $(x^{(i)},y^{(i)}) \in \Delta_{X_r}$ as in the proof of Proposition \ref{prop:X_r_markov_cocycles} so that $\alpha_i= M(x^{(i)},y^{(i)})$.
Expanding the Gibbs cocycle we have:
\begin{eqnarray*}M(x^{(i)},y^{(i)})&=&\phi([{i+2}]_0)-\phi([{i}]_0) \\&+& \sum_{j=1}^d \left( \phi([{i+2},{i+1}]_j) -(\phi([{i},{i+1}]_j)+ \phi([{i+1},{i+2}]_j) - \phi([{i+1},{i}]_j)\right).\end{eqnarray*}
Summing these equations over $i$ we get:
$$\sum_{i=0}^{r-1}{\alpha_i}=\sum_{i=0}^{r-1} M(x^{(i)},y^{(i)}) = 0$$
Conversely, for any values $\alpha_i=M(x^{(i)},y^{(i)})$ such that $\sum_{i=0}^{r-1} \alpha_i = 0$ it is easy to see that there is a corresponding nearest neighbor shift-invariant interaction $\phi$: For instance, set $\phi([{i},{i+1}]_1)=-\sum_{k=i}^{r-1}\alpha_k$,
$\phi([{i},{i+1}]_j)=0$ for $j=2,\ldots,d$ and $\phi([{i+1},{i}]_j)=\phi([{i}]_0)=0$ for $j=1,\ldots,d$ and $i=0,\ldots,{r-1}$.
\end{proof}

Let $\hat{M}: \Delta_{X_r} \to \mathbb{R}$ be the Markov cocycle given by
\begin{equation}\label{eq:hat_M}
\hat{M}(x,y) := \sum_{n \in \mathbb{Z}^d}\hat{y}_n - \hat{x}_n,
\end{equation}
where $(\hat{x},\hat{y})\in \Delta_{Ht}$ satisfy $\phi_r(\hat{x})=x$ and $\phi_r(\hat{y})=y$.
By the following we observe that $\hat{M}(x,y)= 2\sum_{i=0}^{r-1}M_i(x,y)$ for all $(x,y)\in \Delta_{X_r}$:

As in the proof of Proposition \ref{prop:X_r_markov_cocycles} it is sufficient to verify this for $(x,y)=(x^{(i_0)}, y^{(i_0)})$ where $0\leq i_0 \leq r-1$. In that case
$$\hat{M}(x^{(i_0)}, y^{(i_0)})=\hat{y}^{(i_0)}_0-\hat{x}^{(i_0)}_0=2$$
and
$$2\sum_{i=0}^{r-1}M_i(x^{(i_0)}, y^{(i_0)})= 2 \sum_{i=0}^{r-1} \delta_{i_0, i} =2.$$

\begin{corollary}\label{cor:X_r_cocycle_decomp}

Any shift-invariant Markov cocycle $M$ on $X_r$ can be uniquely written as
\begin{equation*}
M=M_0+\alpha \hat{M}
\end{equation*}
where $M_0$ is some Gibbs cocycle, $\alpha\in \R$ and $\hat{M}$ is given by \eqref{eq:hat_M}.
\end{corollary}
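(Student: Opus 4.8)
The plan is to read off the result from the linear-algebraic picture already assembled. By Proposition \ref{prop:X_r_markov_cocycles}, $\mathcal{M}^\sigma_{X_r}$ is the $r$-dimensional space with basis $M_0,\ldots,M_{r-1}$; by Proposition \ref{prop:Gibbs_cocycles_X_r}, $\mathcal{G}^\sigma_{X_r}$ is the codimension-one subspace cut out by $\sum_{i=0}^{r-1}\alpha_i=0$ in those coordinates. The first step is to observe that $\hat{M}$ does \emph{not} lie in $\mathcal{G}^\sigma_{X_r}$: by the computation immediately preceding the corollary, $\hat{M}=2\sum_{i=0}^{r-1}M_i$, so its coordinates in the basis are all equal to $2$, and their sum is $2r\neq 0$. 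Hence $\mathbb{R}\hat{M}\cap\mathcal{G}^\sigma_{X_r}=\{0\}$, and since $\dim\mathcal{G}^\sigma_{X_r}+\dim\mathbb{R}\hat{M}=(r-1)+1=r=\dim\mathcal{M}^\sigma_{X_r}$, we obtain the internal direct sum $\mathcal{M}^\sigma_{X_r}=\mathcal{G}^\sigma_{X_r}\oplus\mathbb{R}\hat{M}$, which is exactly the claimed existence-and-uniqueness statement.

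For a hands-on version giving the decomposition explicitly, I would argue as follows. Given a shift-invariant Markov cocycle $M$, write $M=\sum_{i=0}^{r-1}\alpha_i M_i$ using Proposition \ref{prop:X_r_markov_cocycles}, set $S:=\sum_{i=0}^{r-1}\alpha_i$ and define $\alpha:=S/(2r)$. Then $M-\alpha\hat{M}=\sum_{i=0}^{r-1}\bigl(\alpha_i-S/r\bigr)M_i$, whose basis-coordinate sum is $S-r\cdot(S/r)=0$, so by Proposition \ref{prop:Gibbs_cocycles_X_r} it is a Gibbs cocycle $M_0$ corresponding to a shift-invariant nearest neighbor interaction, and $M=M_0+\alpha\hat{M}$.

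For uniqueness, suppose $M_0+\alpha\hat{M}=M_0'+\alpha'\hat{M}$ with $M_0,M_0'\in\mathcal{G}^\sigma_{X_r}$. Then $(\alpha-\alpha')\hat{M}=M_0'-M_0\in\mathcal{G}^\sigma_{X_r}$; but $(\alpha-\alpha')\hat{M}$ has basis-coordinate sum $2r(\alpha-\alpha')$, which must vanish by the criterion of Proposition \ref{prop:Gibbs_cocycles_X_r}, forcing $\alpha=\alpha'$ and then $M_0=M_0'$.

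I do not expect a genuine obstacle: everything reduces to Propositions \ref{prop:X_r_markov_cocycles} and \ref{prop:Gibbs_cocycles_X_r}. The only point requiring (a trivial amount of) care is keeping track of the factor $2$ in the identity $\hat{M}=2\sum_{i=0}^{r-1}M_i$, since it is precisely the fact that the coordinate sum of $\hat{M}$ is nonzero that makes the decomposition exist and be unique; if one prefers, the same role could be played by any Markov cocycle whose coordinates do not sum to zero, e.g. $M_0$ itself from the basis.
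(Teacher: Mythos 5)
Your proof is correct and coincides with the paper's intended argument: the paper states this as a corollary without explicit proof, relying on exactly the three facts you assemble — the basis $\{M_0,\ldots,M_{r-1}\}$ of $\mathcal{M}^\sigma_{X_r}$ from Proposition \ref{prop:X_r_markov_cocycles}, the hyperplane characterization $\sum_i\alpha_i=0$ of $\mathcal{G}^\sigma_{X_r}$ from Proposition \ref{prop:Gibbs_cocycles_X_r}, and the identity $\hat{M}=2\sum_i M_i$ computed just above the statement. Your reading of ``Gibbs cocycle'' as $M_0\in\mathcal{G}^\sigma_{X_r}$ is the intended one, and it is the only reading under which the uniqueness claim holds, since $\mathcal{G}_{X_r}=\mathcal{M}_{X_r}$ by Proposition \ref{prop:X_r_Markov_is_non_stationary_gibbs}.
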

Thus, the conclusion of the second part of the strong version of the Hammersley-Clifford Theorem
regarding shift-invariant Markov cocycles fails for $X_r$. Our next proposition asserts that the conclusion of the first part of the strong version of the Hammersley-Clifford Theorem still holds for $X_r$. This immediately implies the conclusion of the first part of the weak version of the Hammersley-Clifford Theorem of $X_r$.

\begin{prop}\label{prop:X_r_Markov_is_non_stationary_gibbs} ($\mathcal{M}_{X_r} = \mathcal{G}_{X_r}$)

Let $M:\Delta_{X_r} \to \mathbb{R}$ be a Markov cocycle. There exists a nearest neighbor interaction
$\phi$, which is not necessarily shift-invariant, so that $M= M_\phi$.

\end{prop}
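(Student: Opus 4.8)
The plan is to use the pivot property to reduce the claim to matching $M$ against a Gibbs cocycle on the finitely many local patterns occurring in a single pivot move, and then to build a position-dependent nearest neighbor interaction $\phi$ by solving a one-dimensional transport equation along a single coordinate axis. \textbf{Reduction to pivots.} First I would note that for any nearest neighbor interaction $\psi$ the Gibbs cocycle $M_\psi$ is automatically a Markov cocycle, and for a pivot move $(x,y)$ at a site $n_0$ the value $M_\psi(x,y)$ involves only the terms $\psi(\cdot|_W)$ with $W\subseteq\{n_0\}\cup\partial\{n_0\}$, hence depends only on $x|_{\{n_0\}\cup\partial\{n_0\}}$ and $y|_{\{n_0\}\cup\partial\{n_0\}}$. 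Since $X_r$ has the pivot property (Proposition~\ref{prop:X_r_pivot_property}), the identity \eqref{cocyclepivot} shows that both $M$ and $M_\psi$ are determined by their values on pivot moves; so it suffices to produce $\phi$ with $M_\phi(x,y)=M(x,y)$ for every pivot move $(x,y)$.

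\textbf{Classifying pivots and encoding $M$.} Next I would record that, because $r\ne 4$, a pivot move at $n_0$ forces all neighbours of $n_0$ to carry a common value $c\in\mathbb Z_r$ with $\{x_{n_0},y_{n_0}\}=\{c-1,c+1\}$, and that conversely such a pivot exists for every $n_0$ and every $c$ (lift $x$ via the height function $\hat x_n:=h-\|n-n_0\|_1$ with $h\equiv c+1\bmod r$, which has a strict local maximum at $n_0$, then decrease it by $2$ there). Because $M$ is a Markov cocycle, $w(n_0,c):=M(x,y)$ is well defined, where $(x,y)$ is any pivot at $n_0$ with neighbour value $c$ oriented so that $x_{n_0}=c+1$ and $y_{n_0}=c-1$; the restriction of $M$ to all pivot moves is then completely encoded by $w:\mathbb Z^d\times\mathbb Z_r\to\mathbb R$ (the opposite orientation contributing $-w(n_0,c)$, as $M$ is a cocycle).

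\textbf{Constructing $\phi$.} I would then take $\phi$ to vanish on all singletons and on all edges not parallel to $e_1$, and on the edge $\{n,n+e_1\}$ let $\phi$ vanish on a pattern with value $a$ at $n$ and $a-1$ at $n+e_1$, and set $\beta_n(a):=\phi(\text{value }a\text{ at }n,\ a+1\text{ at }n+e_1)$. Computing $M_\phi$ on a pivot at $n_0$ with neighbour value $c$, only the two $e_1$-edges through $n_0$ survive, yielding $M_\phi(x,y)=\beta_{n_0}(c-1)-\beta_{n_0-e_1}(c)$; thus I must solve the recursion $\beta_{n_0}(c-1)-\beta_{n_0-e_1}(c)=w(n_0,c)$ for all $n_0\in\mathbb Z^d$ and $c\in\mathbb Z_r$. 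I would do this by setting $\beta_n\equiv 0$ on the hyperplane $\{n_1=0\}$ and propagating to $\{n_1=\pm k\}$ for $k\ge 1$ using this relation. With this $\phi$ one gets $M_\phi=M$ on all pivot moves, and the reduction step then gives $M_\phi=M$ on all of $\Delta_{X_r}$.

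\textbf{Expected main obstacle.} The only non-formal point is the solvability of that recursion: the trick is that concentrating the support of $\phi$ on edges of a single coordinate direction turns the defining system into a transport equation along $\mathbb Z$, which is unobstructed precisely because $\mathbb Z$ has no cycles — although the $\mathbb Z_r$-argument is shifted by $1$ at each step, the $e_1$-coordinate strictly moves, so no consistency loop arises. This is exactly where dropping shift-invariance is essential, in contrast with Proposition~\ref{prop:Gibbs_cocycles_X_r}, where the analogous equations carry the one-dimensional obstruction $\sum_i\alpha_i$. Everything else — the case analysis classifying pivots and the tally of which edge patterns change under a pivot — is routine but must be carried out carefully with orientations and the exclusion $r\ne 4$.
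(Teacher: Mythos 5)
Your proof is correct and takes essentially the same approach as the paper's: reduce to pivot moves via the pivot property (Proposition~\ref{prop:X_r_pivot_property} and \eqref{cocyclepivot}), concentrate the interaction on $e_1$-edges only, and solve the resulting one-dimensional, unobstructed transport recursion along the $e_1$-axis starting from a zero boundary condition on the hyperplane $\{n_1=0\}$. The only cosmetic difference is how the free parameters are organized --- the paper activates the $(i,i+1)$ orientation only for $n_1>0$ and the $(i+1,i)$ orientation only for $n_1<0$, whereas you kill one orientation globally and keep $\beta_n(a)$ for all $n$ --- but both lead to the same recursion and the substance of the argument is identical.
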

\begin{proof}
Given $M \in \mathcal{M}_{X_r}$, we will define a compatible nearest neighbor interaction $\phi$ as follows:

The interaction $\phi$ will assign $0$ to any single site configuration.
For $n=(n_1\ldots,n_d) \in \Z^d$, and $1 \le j \le d$, let $\phi_{n,j}(a,b)$ denote the weight the interaction $\phi$ assigns to the configuration $(a,b)$ on the edge $(n,n+e_j)$.
Set $\phi_{n,j}(a,b)=0$ whenever $j \in \{2,\ldots,d\}$. 
For $n=(n_1,\ldots,n_d) \in \Z^d$ and $i \in \Z_r$ define
\begin{eqnarray*}\phi_{n,1}(i,i+1) :=& \begin{cases}
0 & n_1\le 0\\
M(\sigma_n y^{(i)},\sigma_n x^{(i)})+\phi_{n-e_1,1}(i+1,i+2) & n_1 >0
\end{cases}\\
\phi_{n,1}(i+1,i) :=& \begin{cases}
0 & n_1 \ge 0\\
M(\sigma_{n+e_1} y^{(i)},\sigma_{n+e_1} x^{(i)})+\phi_{n+e_1,1}(i+2,i+1) & n_1 <0\\
\end{cases}\end{eqnarray*}
where, as in the proof of Proposition \ref{prop:X_r_markov_cocycles}, $(x^{(i)},y^{{i}}) \in \Delta_{X_r}$ are such that $x^{(i)}_0=i \mod r$, $y^{(i)}_0=i+2 \mod r$ and $x^{(i)}_n = y^{(i)}_n$ for all $n \in \mathbb{Z}^d \setminus \{0\}$.
This is well defined by induction on $|n_1|$. To see that $\phi$ defines a nearest neighbor interaction for $M$, since $X_r$ has the pivot property (Proposition \ref{prop:X_r_pivot_property}) it is sufficient to verify by \eqref{cocyclepivot}
\begin{equation}\label{eq:non_stationary_interaction}
M(y,x) = \sum_{n \in \Z^d}\sum_{j=1}^d\phi_{n,j}(x_{n},x_{n+e_j})-\phi_{n,j}(y_{n},y_{n+e_j}).
\end{equation}
for $(y,x)\in \Delta_{X_r}$ which differ at a single site $n^\p\in \Z^d$.

Then $(y,x)$ coincide with either $(\sigma_{n^\p}y^{(i)},\sigma_{n^\p}x^{(i)})$ or $(\sigma_{n^\p}x^{(i)},\sigma_{n^\p}y^{(i)})$ on the sites $\{n^\p\}\cup\partial \{n^\p\}$ for some $0\leq i\leq r-1$. Without loss of generality assume that $(y,x)$ coincide with $(\sigma_{n^\p}y^{(i_0)},\sigma_{n^\p}x^{(i_0)})$ on the sites $\{n^\p\}\cup\partial \{n^\p\}$ for some $0\leq i_0\leq r-1$. Since $M$ is a Markov cocycle
\begin{eqnarray*}
M(y,x)&=&M(\sigma_{n^\p}y^{(i_0)},\sigma_{n^\p}x^{(i_0)})\\
&=&\begin{cases}
\phi_{n^\p,1}(i_0, i_0+1)- \phi_{n^\p-e_1, 1}(i_0+1, i_0+2) \text{ if } n^\p_1>0\\
\phi_{n^\p- e_1,1}(i_0+1, i_0)- \phi_{n^\p, 1}(i_0+2, i_0+1) \text{ if } n^\p_1\leq 0
\end{cases}\\
&=&\sum_{n \in \Z^d}\sum_{j=1}^d\phi_{n,j}(x_{n},x_{n+e_j})-\phi_{n,j}(y_{n},y_{n+e_j}).
\end{eqnarray*}•
\end{proof}

Combining the above results we obtain:
\begin{corollary}\label{cor:no_invaraint_interaction}
There exists a shift-invariant Markov cocycle on $X_r$ which is given by a nearest neighbor interaction but not by a shift-invariant nearest neighbor interaction, that is,
$$ \mathcal{G}_{X_r}^\sigma \neq \mathcal{G}_{X_r} \cap \mathcal{M}_{X_r}^\sigma.$$
\end{corollary}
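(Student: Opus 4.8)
The plan is to exhibit one explicit shift-invariant Markov cocycle on $X_r$ which we know, via the results just proved, must be Gibbs for \emph{some} nearest neighbor interaction but cannot be Gibbs for any shift-invariant one. By Corollary \ref{cor:X_r_cocycle_decomp}, every shift-invariant Markov cocycle decomposes as $M = M_0 + \alpha\hat M$ with $M_0$ Gibbs and $\alpha\in\R$; and by Proposition \ref{prop:X_r_Markov_is_non_stationary_gibbs}, \emph{every} element of $\M_{X_r}$ (in particular $\hat M$ itself) lies in $\G_{X_r}$. So the natural candidate is simply $\hat M$ (or any $M$ with $\alpha\neq 0$ in the decomposition): it lies in $\M_{X_r}^\sigma$ and in $\G_{X_r}$, hence in $\G_{X_r}\cap\M_{X_r}^\sigma$.

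The remaining point is to show $\hat M \notin \G_{X_r}^\sigma$. This is exactly where Proposition \ref{prop:Gibbs_cocycles_X_r} is used: a shift-invariant Markov cocycle $\sum_i\alpha_i M_i$ is in $\G_{X_r}^\sigma$ if and only if $\sum_{i=0}^{r-1}\alpha_i = 0$. Since we computed $\hat M = 2\sum_{i=0}^{r-1} M_i$ above — i.e. all of its coordinates $\alpha_i$ in the basis $\{M_0,\ldots,M_{r-1}\}$ equal $2$ — we have $\sum_i \alpha_i = 2r \neq 0$, so $\hat M$ is not a Gibbs cocycle for a shift-invariant nearest neighbor interaction. (Equivalently, $\hat M$ is a nonzero scalar multiple of the "non-Gibbs-invariant" direction singled out in Corollary \ref{cor:X_r_cocycle_decomp}.) Combining the three displayed facts — $\hat M\in\M_{X_r}^\sigma$, $\hat M\in\G_{X_r}$, $\hat M\notin\G_{X_r}^\sigma$ — gives the strict inclusion $\G_{X_r}^\sigma \subsetneq \G_{X_r}\cap\M_{X_r}^\sigma$, which is the assertion.

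There is essentially no obstacle here; the corollary is a bookkeeping consequence of Propositions \ref{prop:X_r_markov_cocycles}, \ref{prop:Gibbs_cocycles_X_r}, \ref{prop:X_r_Markov_is_non_stationary_gibbs} and Corollary \ref{cor:X_r_cocycle_decomp}, all of which are already in place. The only thing worth stating carefully in the write-up is that the non-shift-invariant interaction produced in Proposition \ref{prop:X_r_Markov_is_non_stationary_gibbs} genuinely applies to $M = \hat M$ — but that proposition is stated for an arbitrary Markov cocycle on $X_r$, so no extra work is needed. One could alternatively phrase the whole argument without naming $\hat M$: pick any $M\in\M_{X_r}^\sigma$ with nonzero $\hat M$-component $\alpha$ in Corollary \ref{cor:X_r_cocycle_decomp}; it is Gibbs by Proposition \ref{prop:X_r_Markov_is_non_stationary_gibbs} and not shift-invariantly Gibbs by Proposition \ref{prop:Gibbs_cocycles_X_r}.
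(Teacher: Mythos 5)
Your argument is correct and is exactly what the paper intends: the paper states the corollary with no written proof beyond ``combining the above results,'' and the combination you spell out — $\hat M \in \M_{X_r}^\sigma \cap \G_{X_r}$ by Proposition \ref{prop:X_r_Markov_is_non_stationary_gibbs}, while $\hat M = 2\sum_i M_i \notin \G_{X_r}^\sigma$ by Proposition \ref{prop:Gibbs_cocycles_X_r} — is the intended one.
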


\section{Markov random fields on $X_r$ are Gibbs} \label{Section: MRF on 3cb gibbs}

Our main goal is to prove the following result:
\begin{thm}\label{thm:MRF_on_X_r_is_Gibbs}
Any shift-invariant Markov random field adapted to $X_r$ is a Gibbs state for some shift-invariant nearest neighbor interaction.
In particular any shift-invariant Markov random field $\mu$ with $\mathit{supp}(\mu)= X_r$ is a Gibbs state for some shift-invariant nearest neighbor interaction.
\end{thm}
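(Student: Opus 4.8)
The plan is to leverage the structural results already established about Markov cocycles on $X_r$, combined with the special features of shift-invariant Markov random fields adapted to $X_r$. By Corollary \ref{cor:X_r_cocycle_decomp}, any shift-invariant Markov cocycle $M$ decomposes uniquely as $M = M_0 + \alpha \hat M$ where $M_0 \in \mathcal{G}^\sigma_{X_r}$ is Gibbs for a shift-invariant nearest neighbor interaction and $\hat M$ is the ``height-difference'' cocycle of \eqref{eq:hat_M}. So the content of the theorem is precisely that, for a cocycle $M$ which actually arises from a shift-invariant Markov random field $\mu$ adapted to $X_r$, the coefficient $\alpha$ must vanish. First I would associate to $\mu$ its Markov specification and hence its shift-invariant Markov cocycle $M$, and assume for contradiction that $\alpha \ne 0$.

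The key point to exploit is that $\hat M(x,y) = \sum_n (\hat y_n - \hat x_n)$ measures the net change in ``total height''. Intuitively, if $\alpha > 0$, then along a chain of pivots that raises the height function, the probability ratio $\mu([y]_\Lambda)/\mu([x]_\Lambda) = e^{M(x,y)}$ keeps increasing multiplicatively, which should be incompatible with $\mu$ being a probability measure — pushing the height up should be ``free energetically favorable'' without bound, contradicting shift-invariance (which forces a kind of balance) together with finiteness of the measure. To make this precise, I would look at a large box $\Lambda_N = [-N,N]^d$ and compare the $\mu$-probabilities of configurations inside $\Lambda_N$ that differ by a global tilt of the height function. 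Concretely, using the cocycle relation and the pivot property, one can transport a configuration $x$ to a configuration $y$ whose height inside $\Lambda_N$ is uniformly larger by $2$ (where boundary conditions permit), and track how $M(x,y)$ scales: the $M_0$ contribution is controlled by a boundary term (order $N^{d-1}$ from the edge interaction defining $M_0$), while the $\hat M$ contribution scales like the volume, order $N^d$, since roughly $|\Lambda_N|$ sites each change height by $2$. Thus $M(x,y) \approx 2\alpha |\Lambda_N| + O(N^{d-1})$, which for $\alpha \ne 0$ grows (or decays) like $e^{c N^d}$; comparing $\mu([x]_{\Lambda_N})$ and $\mu([y]_{\Lambda_N})$ and summing over the (at most exponentially in $N^{d-1}$ many, by a standard surface-area count for interfaces of height functions) configurations reachable this way forces a contradiction with $\sum \mu = 1$ and shift-invariance of $\mu$.

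The main obstacle — and the part requiring genuine care — is making the combinatorial transport argument rigorous: I need to exhibit, for each $N$, a pair $(x,y) \in \Delta_{X_r}$ (or a family of such pairs) realizing a height tilt that is genuinely volume-order inside $\Lambda_N$, while keeping the ``homoclinic support'' $\tilde F$ controlled, and I need the right entropy/counting bound on how many configurations on $\Lambda_N$ the shift-invariant measure must distribute its mass among. An ergodic-theoretic packaging is cleaner: if $\mu$ is shift-invariant and ergodic (decompose into ergodic components, each of which is still adapted to $X_r$ and has a shift-invariant specification, hence the same cocycle $M$), then one can use the pointwise ergodic theorem along a sequence of pivots to see that $\frac 1 {|\Lambda_N|}\log \big(\mu([y]_{\Lambda_N})/\mu([x]_{\Lambda_N})\big)$ would have to converge to $2\alpha \cdot (\text{mean height-increment density})$, which is a nonzero constant; but the left side is bounded by the difference of entropies $\frac{1}{|\Lambda_N|}\log|\mathcal B_{\Lambda_N}(X_r)| + o(1)$ applied symmetrically, forcing it to have mean zero — contradiction unless $\alpha = 0$.

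Once $\alpha = 0$ is established, $M = M_0$ is already a Gibbs cocycle for a shift-invariant nearest neighbor interaction $\phi$ by Proposition \ref{prop:Gibbs_cocycles_X_r}, and then $\mu$, having the same specification as the Gibbs cocycle $M_\phi$, is by definition a Gibbs state for $\phi$; this gives the first sentence of the theorem. The ``in particular'' clause is immediate since $\mathit{supp}(\mu) = X_r$ implies $\mu$ is adapted to $X_r$. I would also double-check the edge case where $\mathit{supp}(\mu) \subsetneq X_r$: there the specification, hence cocycle, is only defined on $\Delta_{\mathit{supp}(\mu)}$, but since $\mu$ is adapted, $\mathit{supp}(\mu)$ is a union of homoclinic classes, so the decomposition and the vanishing-$\alpha$ argument still run on each class — and one needs that $\mathit{supp}(\mu)$, being a shift-invariant topological Markov field, still carries enough pivots for the entropy argument, which follows from Proposition \ref{prop:X_r_pivot_property} restricted appropriately.
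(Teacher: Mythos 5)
Your high-level idea — decompose the cocycle as $M = M_0 + \alpha\hat M$ via Corollary~\ref{cor:X_r_cocycle_decomp}, reduce to the ergodic case, and then argue that if $\alpha\neq 0$ the cocycle forces a volume-order gain in probability ratios that destroys finiteness of the measure — is the right starting point and matches the paper's. However, there are three substantive gaps.

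First, you aim to prove that $\alpha = 0$ for every adapted $\mu$. That is not what is true, and it is not what the paper proves. The paper's Proposition~\ref{prop:non_gibbs_frozen} proves a dichotomy: if the Radon--Nikodym cocycle of $\mu$ is the restriction of a non-Gibbs $M$ (i.e.\ $\alpha\neq 0$), then $\mu$ must be \emph{frozen}. The theorem then follows because any frozen measure is vacuously Gibbs (its homoclinic relation on $\operatorname{supp}(\mu)$ is the diagonal, so any nearest neighbor interaction works). Frozen adapted measures with restricted cocycle compatible with nonzero $\alpha$ genuinely exist (e.g.\ the periodic chessboard configurations), so an argument showing ``$\alpha=0$ always'' is doomed; you need the frozen alternative. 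This is also precisely where your handling of the case $\operatorname{supp}(\mu)\subsetneq X_r$ breaks: you assert that $\operatorname{supp}(\mu)$ ``still carries enough pivots'' by restricting Proposition~\ref{prop:X_r_pivot_property}, but if $\mu$ is frozen then $\operatorname{supp}(\mu)$ carries no pivots at all, and the whole transport argument has nothing to run on. The paper deals with this by Lemma~\ref{lem:trivialsupport}: if the slope $v_j=\int\DHt(x,e_j)\,d\mu$ is extremal ($|v_j|=1$ for some $j$), then $\mu$ is frozen, and one proceeds only under the assumption $|v_j|<1$ for all $j$.

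Second, and this would make even the non-frozen part of your argument fail: your quantitative estimate of the $\hat M$ gain is too small. You propose raising the height ``uniformly by $2$'' inside $\Lambda_N$, getting $\hat M(x,y) = \Theta(N^d)$ and asserting $M_0(x,y)=O(N^{d-1})$. Both sides are off: $M_0(x,y)$ is generically $\Theta(N^d)$ (there are $\Theta(N^d)$ cliques intersecting the region where $x,y$ differ, with no reason for bulk cancellation), so the $\hat M$ contribution does not dominate $M_0$. Worse, a gain of order $e^{cN^d}$ does not beat the count of configurations in $\B_{D_N}(X_r)$, which is also $e^{\Theta(N^d)}$ since $X_r$ has positive entropy. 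The paper instead pushes the height all the way to its \emph{maximum} over the box with fixed boundary (Lemma~\ref{lem:heightmaximum}); when the slope is non-extremal, this raises the center height by $\Theta(\epsilon N)$, and then Lemma~\ref{lem:bigmeasure} gives $\hat M = \Omega(N^{d+1})$, which dominates both $M_0 = O(N^d)$ and the $e^{O(N^d)}$ entropy count. The extra power of $N$ is essential.

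Third, your ``cleaner ergodic packaging'' is not precise enough to judge and, as stated, does not obviously close the argument: Shannon--McMillan--Breiman controls $\frac{1}{|\Lambda_N|}\log\mu([x]_{\Lambda_N})$ for $\mu$-typical $x$, but the $y$'s you want to compare against (those with uniformly raised height) are precisely \emph{atypical}, so the $o(1)$ bound you invoke on the difference does not apply symmetrically. The paper's actual closing step is different in character: it compares the conditional measure of a ``typical'' event $A_k$ (defined via the height-slope Lemma~\ref{lem:height_slope_limit}) against the conditional measure of a single maximal-height configuration $z$, and shows $\mu(A_k\mid[x]_{\partial D_{k-1}})\le e^{O(k^d)}e^{-c_3k^{d+1}}\to 0$, contradicting the choice of $x$ with $\mu(A_k\mid[x]_{\partial D_{k-1}})>1-2\epsilon$. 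Your sketch is missing this conditional/counting structure and the crucial appeal to a maximal height function.

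In short: you correctly identified the decomposition and the ergodic reduction, but the claim ``$\alpha=0$'' is too strong, the frozen alternative (and the slope lemma that detects it) is missing, and the constant-height-push plus entropy comparison does not produce the needed superexponential gap; you need the maximal-height lift and the $N^{d+1}$-scaling of Lemma~\ref{lem:bigmeasure}.
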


Theorem \ref{thm:MRF_on_X_r_is_Gibbs} implies that the conclusion of the second part of the weak version of the Hammersley-Clifford Theorem holds for $X_r$, although the argument is very different from the safe-symbol case.

For a subshift $X$, a point $x\in X$ will be called \emph{frozen} if its homoclinic class is a singleton. This notion coincides with the notion of frozen coloring in \cite{brightwell2000gibbs}.
By Proposition \ref{prop:X_r_pivot_property}, $X_r$ has the pivot property so $x \in X_r$ is frozen if and only if for every $n \in \Z^d$, $x_j \neq x_k$ for some $j, k \in \partial\{n\}$, that is, any site is adjacent to at least two sites with distinct symbols. A subshift $X$ will be called \emph{frozen} if it consists of frozen points, equivalently $\Delta_X$ is the diagonal. A measure on a subshift $X$ will be called \emph{frozen} if its support consists of frozen points. Note that the collection of frozen points of a given topological Markov field $X$ is itself a topological Markov field.

We derive Theorem \ref{thm:MRF_on_X_r_is_Gibbs} as an immediate corollary of the following proposition:
\begin{prop}\label{prop:non_gibbs_frozen}
Let $\mu$ be a shift-invariant Markov random field adapted to $X_r$ with Radon-Nikodym cocycle equal to the restriction of $e^M$ to its support where $M \in \mathcal{M}^\sigma_{X_r} \setminus \mathcal{G}^\sigma_{X_r}$ is a Markov cocycle which is not given by a shift-invariant nearest neighbor interaction. Then $\mu$ is frozen.
\end{prop}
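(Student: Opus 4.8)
The plan is to show that if $\mu$ is as in the statement and $\mu$ is \emph{not} frozen, then in fact $M$ must be a Gibbs cocycle for a shift-invariant nearest neighbor interaction, contradicting the hypothesis $M \in \mathcal{M}^\sigma_{X_r} \setminus \mathcal{G}^\sigma_{X_r}$. By Corollary \ref{cor:X_r_cocycle_decomp} we may write $M = M_0 + \alpha\hat M$ with $M_0$ a Gibbs cocycle and $\alpha \in \mathbb{R}$; since $\hat M = 2\sum_i M_i$ and the condition for being Gibbs (Proposition \ref{prop:Gibbs_cocycles_X_r}) is $\sum_i \alpha_i = 0$, the cocycle $M$ is Gibbs (with shift-invariant interaction) precisely when $\alpha = 0$. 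So the whole problem reduces to proving: \emph{if $\mu$ is not frozen, then $\alpha = 0$.}

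First I would extract the dynamical meaning of ``not frozen'': there is $x \in \operatorname{supp}(\mu)$ and $n_0 \in \mathbb{Z}^d$ with $x$ not frozen, and since $\mu$ is adapted and $X_r$ has the pivot property (Proposition \ref{prop:X_r_pivot_property}), one can find a point $y \in \operatorname{supp}(\mu)$ differing from $x$ at the single site $n_0$, with (by Lemma \ref{lem:homoclinic_heights}, item \ref{item:homo5}) height lifts $\hat x, \hat y$ satisfying $|\hat x_{n_0} - \hat y_{n_0}| = 2$ and equal elsewhere. For such a single-site pivot, $\hat M(x,y) = \pm 2$, so the Radon-Nikodym relation $\mu([y]_\Lambda)/\mu([x]_\Lambda) = e^{M(x,y)} = e^{M_0(x,y)} e^{\pm 2\alpha}$ holds for any window $\Lambda \supset \{n_0\}\cup\partial\{n_0\}$. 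The strategy is to ``transport'' this local relation around a large region using the cocycle/telescoping identity and the fact that $M_0$ is Gibbs, to force a global constraint that can only hold when $\alpha = 0$. Concretely: the support being a topological Markov field on which $\mu$ is adapted means that wherever a site can be pivoted in $X_r$ it can be pivoted inside $\operatorname{supp}(\mu)$; iterating pivots one builds, from $x$, a sequence of configurations in $\operatorname{supp}(\mu)$ whose net height change $\sum_n (\hat y_n - \hat x_n) = \hat M$-value can be made an arbitrarily large positive (or negative) even integer while the underlying configuration cycles back through a bounded set of local pictures. Since $M_0$, being a \emph{nearest neighbor} Gibbs cocycle, contributes a \emph{bounded} amount per such ``unit'' move in a periodic pattern, while the $\alpha\hat M$ term accumulates linearly, comparing the total $M$-value computed two ways (via the cocycle identity along the chain of pivots, versus via adaptedness of $\mu$ which forces all the intermediate probabilities to be positive and the ratios to multiply correctly) yields a growing-versus-bounded contradiction unless $\alpha = 0$.

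More carefully, the cleanest route is probably to exhibit a specific \emph{loop} of pivots: a sequence $x = z^{(0)}, z^{(1)}, \ldots, z^{(N)} = x$ in $X_r$ of single-site pivot moves returning to $x$, along which the associated height functions increase by $2$ at one site, propagate that ``bump'' across a periodic block, and come back — so that $\sum_{k} \hat M(z^{(k)}, z^{(k+1)}) \ne 0$ even though the loop is closed, exploiting that $\hat M$ is a genuine cocycle whose value on a closed loop need not vanish (it measures net height displacement of the lift, which is path-dependent in the sense of Lemma \ref{lem:homoclinic_heights}). Actually $\hat M$ \emph{does} vanish on closed loops in $X_r$ (it is $\hat y_n - \hat x_n$ summed, which is $0$ when $x=y$), so instead the argument must use a genuinely homoclinic (non-loop) pair: fix a large box, pivot a single site up by $2$, then use adaptedness to slide this discrepancy out to the boundary of the box through a chain of pivots, producing a homoclinic pair $(x, w)$ supported on the box with $\hat M(x,w)$ equal to twice the box-radius (say), while $M_0(x,w)$ stays bounded by a constant times the box-\emph{surface} interaction — and then, since $\mu$ is adapted so all these configurations have positive $\mu$-measure and the ratio $\mu([w]_\Lambda)/\mu([x]_\Lambda) = e^{M(x,w)}$ must be a genuine ratio of probabilities bounded by, say, $1/\mu([\text{smallest cylinder}]) $ along a suitably chosen exhaustion, one derives that $e^{M(x,w)}$ cannot grow like $e^{c\alpha \cdot (\text{radius})}$; hence $\alpha = 0$.

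I expect the main obstacle to be setting up this ``moving the bump to the boundary'' chain of pivots \emph{inside} $\operatorname{supp}(\mu)$ (rather than just in $X_r$) and extracting from adaptedness plus shift-invariance a uniform bound that genuinely contradicts the linear growth — the subtlety is that a priori $\mu$-probabilities of large cylinders decay, so one needs the comparison to be between configurations differing only on a \emph{bounded} set (a single bump and its track), which is exactly why the homoclinic/adapted structure is essential; one must also handle the case distinction between $F_+$ and $F_-$ (sign of $\alpha$) symmetrically. A secondary technical point is verifying that the $M_0$-contribution along the bump-transport chain really is bounded uniformly in the box size, which should follow from shift-invariance and finite range of the interaction underlying $M_0$, together with the fact that the transport chain, viewed locally, uses only finitely many distinct pivot moves up to translation.
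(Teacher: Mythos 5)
Your first step — reducing via Corollary \ref{cor:X_r_cocycle_decomp} to showing that $\alpha=0$ whenever $\mu$ is not frozen — is correct and matches the paper. After that, however, your argument goes off track in a way that would not survive being made precise, and it misses the three ingredients that actually carry the paper's proof.

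The central gap is quantitative. You propose to create a single $+2$ bump at one site and then ``slide this discrepancy out to the boundary of the box through a chain of pivots,'' claiming this produces a homoclinic pair $(x,w)$ with $\hat M(x,w)$ of order the box radius. But $\hat M(x,w)=\sum_n(\hat w_n-\hat x_n)$ is translation-blind: if at each stage of the transport the lift differs from $\hat x$ by $+2$ at a \emph{single} site, then $\hat M(x,w)=2$ throughout, no matter where the bump is. Sliding gets you nowhere. And even if you interpret ``track'' as accumulating a $+2$ difference over a one-dimensional path of length $k$, you get $\hat M \sim k$ while the pair differs on $\sim k$ sites, so $M_0$ is also $\sim k$; there is no asymptotic separation, and the ``linear vs.\ bounded'' comparison you are aiming for does not materialize. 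The correct scales, and the reason the argument is delicate, are that one needs $\hat M$ to grow like the $(d+1)$-dimensional \emph{volume} between two height functions while $M_0$ grows like the number of sites they differ on; the paper achieves $\hat M(y,z)\gtrsim (\epsilon k)^{d+1}$ against $|M_0(y,z)|\lesssim k^d$ (Lemma \ref{lem:bigmeasure}) by raising the height by order $k$ — not by $2$ — over a ball of radius $k$, which is possible exactly because the ``typical'' height function has slope $|v_j|<1$. Your sketch never introduces the slope $v_j=\int \DHt(x,e_j)\,d\mu$, never invokes the ergodic decomposition (Theorem \ref{lem:MRF_ergodic_decomp}) that makes the slope well-defined and constant, and never uses Lemma \ref{lem:height_slope_limit} to control the boundary heights uniformly in direction. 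Without these, there is no way to compare a generic configuration to the maximal height function of Lemma \ref{lem:heightmaximum} and obtain the super-volume growth of $\hat M$.

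A secondary but also fatal issue is your proposed use of the bound $\mu([w]_\Lambda)/\mu([x]_\Lambda)\le 1/\mu([\text{smallest cylinder}])$. Cylinder probabilities on a box of radius $k$ decay like $e^{-ck^d}$ for any positive-entropy measure, so this a priori bound permits ratios up to $e^{ck^d}$ and cannot contradict $\hat M$-growth of the order you describe. What the paper does instead is a counting argument: it conditions on the boundary, notes there are at most $e^{O(k^d)}$ patterns in the box, and pits this against $e^{-c_3 k^{d+1}}$ coming from the Radon--Nikodym ratio to show $\mu(A_k\mid[x]_{\partial D_{k-1}})\to 0$, contradicting $\mu(A_k\mid[x]_{\partial D_{k-1}})>1-2\epsilon$. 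You correctly sensed that the subtlety lies in getting a uniform comparison that beats the a priori decay, but the mechanism you propose (a bounded symmetric difference) forces $\hat M$ to be bounded as well and so cannot close the argument.
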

Note that any frozen probability measure is Gibbs with any nearest neighbor interaction because the homoclinic relation of the support of the measure is trivial. The intuition behind the proof of this proposition is the following: For a Markov cocycle $M= \sum_{i=1}^r \alpha_i M_i$ the condition $\sum_{i=1}^r\alpha_i>0$ indicates an inclination to raise the height function. However $\sigma$-invariance implies the existence of a well defined ``slope'' for the height function in all directions. Unless this slope is extremal, that is, maximal ($\pm \|n\|_1$) in some direction $n\in \Z^d\setminus\{0\}$, this will lead to a contradiction.

In preparation for the proof, we set up some auxiliary results.

\subsection{Real valued cocycles for measure-preserving $\Z^d$-actions}
We momentarily pause our discussion about Markov random fields on $X_r$ to discuss cocycles for measure-preserving $\mathbb{Z}^d$ actions.
Let $(X,\mathcal{F},\mu,T)$ be an ergodic measure-preserving $\mathbb{Z}^d$-action.
A measurable function $c:X\times \mathbb{Z}^d \to \mathbb{R}$ is called a \emph{$T$-cocycle} if it satisfies the following equation $\mu$-almost everywhere with respect to $x \in X$:
\begin{equation}\label{eq:T_cocycle}
c(x,n+m) =c(x,n)+c(T_{n}x,m) ~ \forall n,m \in \ZD .
\end{equation}

By \eqref{eq:DHt_coycle_relation} the function $\DHt:X_r \times \mathbb{Z}^d \to \mathbb{R}$ defined in \eqref{eq:Dht_generators} and \eqref{eq:Ht_coycle} is indeed a shift-cocycle with respect to the shift action on $X_r$.

We will use the following lemma:
\begin{lemma}\label{lem:coycle_ergodic}

Let $(X,\mathcal{F},\mu,T)$ be an ergodic measure-preserving $\mathbb{Z}^d$ action and $c:X\times \mathbb{Z}^d \to \mathbb{R}$ be a measurable cocycle such that
for all $n \in \mathbb{Z}^d$ the function $f_{c,n}(x):=c(x,n)$ is in $L^1(\mu)$, then
for all $n=(n_1, n_2,\ldots, n_d) \in \mathbb{Z}^d$
\begin{eqnarray*}\lim_{k \to \infty}\frac{c(x,k n)}{k}&=&\int c(x,n) d\mu(x)\\&=&\sum_{i=1}^d n_i\int c(x, e_i) d \mu(x).\end{eqnarray*}
The convergence holds almost everywhere with respect to $\mu$ and also in $L^1(\mu)$.
\end{lemma}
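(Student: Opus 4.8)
The plan is to recognize this as essentially a pointwise ergodic theorem for cocycles, a standard fact which follows from the Birkhoff ergodic theorem applied to a cleverly chosen function. First I would note that it suffices to prove the limit along a single direction $n$, since the second equality (linearity of $n \mapsto \int c(x,n)\,d\mu(x)$) is an immediate consequence of the cocycle identity \eqref{eq:T_cocycle}: integrating $c(x, e_i + e_j) = c(x,e_i) + c(T_{e_i}x, e_j)$ and using $T$-invariance of $\mu$ gives $\int c(x,e_i+e_j)\,d\mu = \int c(x,e_i)\,d\mu + \int c(x,e_j)\,d\mu$, and iterating handles arbitrary $n = \sum n_i e_i$ (with the convention $c(x,-e_i) = -c(T_{-e_i}x, e_i)$ so that $\int c(x,-e_i)\,d\mu = -\int c(x,e_i)\,d\mu$).

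Next, fix $n \in \mathbb{Z}^d$ and consider the transformation $S := T_n$, which is a measure-preserving transformation of $(X,\mathcal{F},\mu)$ (though not necessarily ergodic, even though $T$ is ergodic as a $\mathbb{Z}^d$-action). Iterating the cocycle equation \eqref{eq:T_cocycle} gives the telescoping identity
\begin{equation*}
c(x, kn) = \sum_{j=0}^{k-1} c(T_{jn} x, n) = \sum_{j=0}^{k-1} f_{c,n}(S^j x),
\end{equation*}
so that $c(x,kn)/k$ is exactly the Birkhoff average of the $L^1$ function $f_{c,n}$ under $S$. By the Birkhoff pointwise ergodic theorem (and its $L^1$ companion, the $L^1$-mean ergodic theorem, since $f_{c,n} \in L^1(\mu)$), the averages $\frac{1}{k}\sum_{j=0}^{k-1} f_{c,n}(S^j x)$ converge $\mu$-almost everywhere and in $L^1(\mu)$ to $\mathbb{E}_\mu[f_{c,n} \mid \mathcal{I}_S]$, where $\mathcal{I}_S$ is the $\sigma$-algebra of $S$-invariant sets.

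The remaining point — and the only place where ergodicity of the full $\mathbb{Z}^d$-action is used — is to identify this conditional expectation with the constant $\int f_{c,n}\,d\mu = \int c(x,n)\,d\mu$. The natural way is to show that $\mathbb{E}_\mu[f_{c,n}\mid \mathcal{I}_S]$ is $T$-invariant as a function on $X$, hence constant by ergodicity of $T$ as a $\mathbb{Z}^d$-action, hence equal to its integral $\int f_{c,n}\,d\mu$. The subtlety, and what I expect to be the main obstacle, is that $f_{c,n}$ itself is not $T$-invariant, so one cannot directly conclude $T$-invariance of its $S$-conditional expectation; the cocycle relation must be used to control how $f_{c,n} \circ T_m$ differs from $f_{c,n}$. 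Concretely, $f_{c,n}(T_m x) - f_{c,n}(x) = c(T_m x, n) - c(x,n)$, and applying the cocycle identity twice (expanding $c(x, m+n)$ in the two orders $m$-then-$n$ and $n$-then-$m$) yields $c(T_m x, n) - c(x,n) = c(T_n x, m) - c(x, m) = f_{c,m}(Sx) - f_{c,m}(x)$, a coboundary for $S$. Since coboundaries have zero conditional expectation onto $\mathcal{I}_S$ (provided $f_{c,m} \in L^1$, which is assumed), we get $\mathbb{E}_\mu[f_{c,n}\circ T_m \mid \mathcal{I}_S] = \mathbb{E}_\mu[f_{c,n}\mid \mathcal{I}_S]$ up to composing with $T_m$; combining with the fact that $S$ and $T_m$ commute (so $T_m$ maps $\mathcal{I}_S$ to itself) gives that $g := \mathbb{E}_\mu[f_{c,n}\mid\mathcal{I}_S]$ satisfies $g \circ T_m = g$ for every $m$, hence $g$ is constant $\mu$-a.e. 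Integrating then gives $g = \int f_{c,n}\,d\mu = \sum_i n_i \int c(x,e_i)\,d\mu$, completing the proof.
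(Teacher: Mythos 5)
Your proposal is correct, and it follows the same overall skeleton as the paper: telescope the cocycle identity to write $c(x,kn)/k$ as a Birkhoff average of $f_{c,n}$ under $S=T_n$, invoke the pointwise and $L^1$ ergodic theorems, and then use the cocycle relation together with ergodicity of the full $\Z^d$-action to show the resulting limit is constant. The one genuine difference is in how you and the paper establish $T$-invariance of the limit. You identify the limit as $g = \mathbb{E}_\mu[f_{c,n}\mid\mathcal{I}_S]$ and observe, via the symmetric cocycle identity, that $f_{c,n}\circ T_m - f_{c,n}$ is an $S$-coboundary of the $L^1$ function $f_{c,m}$, hence has vanishing conditional expectation onto $\mathcal{I}_S$; combined with the fact that $T_m$ commutes with $S$ (so $\mathbb{E}[\,\cdot\mid\mathcal{I}_S]$ intertwines with $T_m$), this gives $g\circ T_m = g$. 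The paper instead works directly with the limit $\bar f(x)=\lim_k c(x,kn)/k$ and bounds $|\bar f(x)-\bar f(T_m x)|$ by $\limsup_k \frac{1}{k}\bigl(|f_{c,m}(x)| + |f_{c,-m}(T_n^k T_m x)|\bigr)$, which vanishes a.e.\ by the elementary fact that $\frac{1}{k}g(S^k x)\to 0$ a.e.\ for $g\in L^1$ and $S$ measure-preserving. Your route is a little more conceptual (coboundaries and $\mathcal{I}_S$) and also makes explicit the linearity statement $\int c(x,n)\,d\mu = \sum_i n_i\int c(x,e_i)\,d\mu$ which the paper leaves to the reader; the paper's route is more elementary, avoiding the conditional-expectation machinery and the verification that $T_m$ preserves $\mathcal{I}_S$. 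Both are fully rigorous.
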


\begin{proof}
By the cocycle equation \eqref{eq:T_cocycle} for $\mu$-almost every $x \in X$, any $k \in \mathbb{N}$ and $n \in \ZD$ we have:
$$c(x,k\cdot n) = \sum_{i=0}^{k-1}c(T_{n}^{i}x,n).$$
The existence of almost everywhere and $L^1$ limit $\bar{f}(x):=\lim_{k \to \infty}\frac{c(x,k n)}{k}$ follows from the pointwise and $L^1$ ergodic theorems.
To complete the proof we need to show that the limit is constant almost everywhere. We do this by showing that the limit is $T$-invariant.
By the cocycle equation \eqref{eq:T_cocycle}, for almost every $x \in X$ and any $m,n \in \mathbb{Z}^d$ and $k \in \mathbb{N}$ we have:
\begin{eqnarray*}
c(x,k n) = & c(x,m+ kn -m)\\
= &c(x,m) + c(T_mx, kn -m)\\
=&c\left(x,m\right)
+c\left(T_{m}x,k n\right)
+c\left(T_{m+k n}x,-m\right).
\end{eqnarray*}
Thus,
$$| \bar{f}(x)-\bar{f}(T_m(x))|\leq\limsup_{k \to \infty}\frac{1}{k}\left(| c(x,m)| + |c(T_{m+k\cdot n}x,-m)|\right)\le \limsup_{k \to \infty}\frac{1}{k}|f_{c,m}(x)| +\frac{1}{k}|f_{c,-m}(T_n^kT_mx)| .$$
Since $f_{c,m}, f_{c,-m} \in L^1(\mu)$, $\limsup_{k \to \infty}\frac{1}{k}|f_{c,m}(x)|$ and $\limsup_{k \to \infty}\frac{1}{k}|f_{c,-m}(T_n^kT_mx)| $ are both equal to $0$ almost everywhere (the second term vanishes because $\lim_{k \to \infty}\frac{1}{k}g(S^k x) =0$ a.e for $g \in L^1$ and $S$ measure-preserving).
Therefore,
\begin{eqnarray*}
\lim_{k \to \infty}\frac{c(x,k\cdot n)}{k}&=&\int c(x,n) d\mu(x)\\&=&\sum_{i=1}^d n_i\int c(x, e_i) d \mu(x).\end{eqnarray*}
\end{proof}

\textbf{Remark:} In the specific case that $T$ is \emph{totally ergodic}, meaning that the individual action of each $T_{n}$ is ergodic for all $n \in \mathbb{Z}^d \setminus \{0\}$, the lemma above is completely obvious since
$\frac{c(x,k
\cdot n)}{k} = \frac{1}{k}\sum_{j=0}^{k-1}c(T^j_{n}x,n)$, which is an ergodic average. The point of Lemma \ref{lem:coycle_ergodic} is that ergodicity of the $\ZD$-action is sufficient for the limit to be constant.

The cocycle $\DHt:X_r \times \ZD \to \Z$ is not only measurable but also continuous. We can use this, along with compactness of $X_r$ and the unit ball in $\mathbb{R}^d$ to obtain uniformity of the convergence with respect to the ``direction'' on a set of full measure.

For convenience we extend the definition of $\DHt:X \times \ZD \to \mathbb{Z}\subset \R$ given by \eqref{eq:Dht_generators} and \eqref{eq:Ht_coycle} to a function $\DHt : X\times \R^d\longrightarrow \R$ as follows:
\begin{equation}
grad(x, w):= grad(x, \lfloor w \rfloor)
\end{equation}
where $w=(w_1, w_2, \ldots, w_d)\in \R^d$ and $\lfloor w \rfloor$ denotes $(\lfloor w_1 \rfloor, \lfloor w_2 \rfloor, \ldots, \lfloor w_d \rfloor)$.
\begin{lemma}\label{lem:height_slope_limit}
Let $\mu$ be an ergodic measure on $X_r$.
Then $\mu$-almost surely
$$\lim_{k \to \infty}\sup_{ \|w\|_1=k }\frac{1}{k}\left| \DHt(x,w) - \langle w, v\rangle \right|=0$$
where 
$$ v_j := \int \DHt(x,e_j) d\mu(x) ~,~ v:=(v_1,\ldots,v_d),$$
the supremum is over $\{ w \in \mathbb{R}^d ~:~ \|w\|_1 = k\}$,
and $\langle n, v\rangle = \sum_{i=1}^d n_i v_i$ is the standard inner product.
\end{lemma}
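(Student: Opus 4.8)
The plan is to derive the statement from Lemma \ref{lem:coycle_ergodic} applied to the continuous cocycle $\DHt$, upgrading the pointwise convergence $\frac{1}{k}\DHt(x,kn) \to \langle n,v\rangle$ (which holds along each fixed lattice direction $n$) to a convergence that is uniform in the direction. The key structural facts to exploit are: (i) $\DHt$ is a cocycle, so it is ``sub-additive'' in a precise sense coming from \eqref{eq:DHt_coycle_relation}; (ii) $\DHt$ is $1$-Lipschitz in the sense that $|\DHt(x,n) - \DHt(x,m)| \le \|n-m\|_1$ for all $n,m \in \ZD$ and all $x$ (this is immediate from \eqref{eq:Dht_generators} and \eqref{eq:Ht_coycle}, since each generator step changes the value by $\pm 1$); and (iii) the unit sphere $\{\|w\|_1 = 1\}$ in $\R^d$ is compact, so it can be covered by finitely many small balls.

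First I would reduce to lattice directions: for $w \in \R^d$ with $\|w\|_1 = k$, by definition $\DHt(x,w) = \DHt(x,\lfloor w\rfloor)$, and $\|\lfloor w\rfloor - w\|_1 \le d$, hence by the Lipschitz property $|\DHt(x,w) - \DHt(x,\lfloor w\rfloor)| \le d$ and $|\langle w,v\rangle - \langle \lfloor w\rfloor, v\rangle| \le d\|v\|_\infty$; dividing by $k$ these corrections vanish. So it suffices to prove $\frac{1}{k}\sup_{n \in \ZD,\ \|n\|_1 \le k} |\DHt(x,n) - \langle n,v\rangle| \to 0$ almost surely. Next, fix $\varepsilon > 0$ and choose a finite set of lattice ``reference directions'' $n^{(1)},\ldots,n^{(L)} \in \ZD$ together with a scale $N$ so that every $n \in \ZD$ with $\|n\|_1 = N$ lies within $\ell^1$-distance $\varepsilon N$ of some $n^{(\ell)}$ with $\|n^{(\ell)}\|_1 = N$ (possible by compactness of the sphere and density of rational directions). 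By Lemma \ref{lem:coycle_ergodic} there is a full-measure set and an index $K$ such that for $k \ge K$ and every $\ell$, $\frac{1}{k}|\DHt(x, k\, n^{(\ell)}) - k\langle n^{(\ell)}, v\rangle| < \varepsilon$; enlarging the reference set to include $n^{(\ell)}$ for the finitely many relevant scales, we may assume this holds for all large multiples simultaneously.

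Now for general $n \in \ZD$ with $\|n\|_1 = m$ large, write $m = qN + s$ with $0 \le s < N$. I would approximate $n$ by $q$ copies of an appropriate reference direction: using the cocycle relation \eqref{eq:DHt_coycle_relation} repeatedly, $\DHt(x,n)$ differs from $\DHt(x, q\, n^{(\ell)})$ (for a suitable $\ell$, chosen so that $n^{(\ell)}$ scaled to length $\approx m/q = N$ points near $n/q$) by at most the $\ell^1$-error accumulated step by step; each of the $q$ blocks contributes a Lipschitz error of at most $\varepsilon N + O(1)$, and the leftover of size $s < N$ contributes $O(N)$. Thus $|\DHt(x,n) - \DHt(x, q\,n^{(\ell)})| \le q(\varepsilon N + C) + CN$. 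Combining with the reference-direction estimate and $\langle q\, n^{(\ell)}, v\rangle \approx \langle n, v\rangle$ up to a comparable error (using $\|q\,n^{(\ell)} - n\|_1 \le q\varepsilon N + N$ and boundedness of $v$), one gets $\frac{1}{m}|\DHt(x,n) - \langle n, v\rangle| \le C'(\varepsilon + \|v\|_\infty \varepsilon) + O(1/m)$ uniformly over all such $n$. Letting $m \to \infty$ and then $\varepsilon \to 0$ yields the claim.

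The main obstacle I anticipate is making the ``decompose a long vector into near-parallel blocks plus control the accumulated Lipschitz error'' step genuinely uniform over all $n$ on the sphere of radius $m$, simultaneously for all large $m$, on a single full-measure set — the delicate point being that Lemma \ref{lem:coycle_ergodic} gives almost-everywhere convergence for each fixed direction, and one must intersect only finitely many such full-measure sets (one per reference direction) before taking the uniform estimate, which is exactly why the compactness/finite-net reduction must be carried out before invoking the ergodic theorem rather than after. A secondary bookkeeping issue is that the number of lattice points at distance $\approx N$ from a given direction is not something one sums over; rather, one must pick, for each $n$, a single good reference direction and a single good decomposition, so the argument is ``for all $n$, there exists $\ell$'' and the finiteness is only in the choice of the net, not in a union bound over all $n$.
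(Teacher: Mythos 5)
Your proposal is correct and follows essentially the same strategy as the paper: invoke Lemma \ref{lem:coycle_ergodic} on a finite rational $\varepsilon$-net of the unit $\ell^1$-sphere, and then use the Lipschitz estimate $|\DHt(x,w')-\DHt(x,u')|\le \|w'-u'\|_1 + 2d$ (coming from the cocycle relation and $|\DHt(x,n)|\le\|n\|_1$) to transfer the almost-sure convergence along the net directions to uniform convergence over all directions. The only stylistic difference is that the paper applies the Lipschitz bound once to the pair $(w,\tilde k M u)$ rather than accumulating error over $q$ blocks as you do; your block decomposition gives a slightly weaker but still sufficient estimate.
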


\begin{proof}
Let
$$E_\epsilon := \left\{x \in X_r ~:~ \limsup_{k \to \infty}\sup_{ \|w\|_1=k }\frac{1}{k}\left| \DHt(x,w) - \langle w, v\rangle \right| > \epsilon \right\}.$$

We will prove the lemma by showing that $\mu(E_\epsilon) =0$ for all $\epsilon >0$.

Fix $\epsilon >0$. Since $\mathbb{Q}^d$ is dense in $\mathbb{R}^d$, using compactness of the unit ball in $(\mathbb{R}^d,\|\cdot\|_1)$,
we can find a finite $F \subset \mathbb{Q}^d$ which $\frac{\epsilon}{8}$-covers the unit ball. By this we mean that for all $w \in \mathbb{R}^d$ such that $\|w\|_1=1$ there exists $u \in F$ so that $\|w-u\|_1 \le \frac{\epsilon}{8}$.
Because $F \subset \mathbb{Q}^d$ is finite, there exists $M \in \mathbb{N}$ so that $Mu\in \mathbb{Z}^d$ for all $u \in F$.
By Lemma \ref{lem:coycle_ergodic}, there exists a measurable set $X' \subset X_r$ with $\mu(X')=1$ so that for all $u \in F$, $x \in X'$
$$\lim_{k \to \infty} \frac{1}{Mk}\DHt(x,Mku)= \langle u, v\rangle.$$

To complete the proof we will prove that $X^\prime \subset E_\epsilon^c$.
Given $x \in X'$ we can find
an integer $J > 8M\epsilon^{-1}$ so that for all $k > \frac{J}{M}$ and all $u \in F$
$$\left|\frac{1}{Mk}\DHt(x,Mku)-\langle u,v\rangle \right| < \frac{\epsilon}{8}.$$
Note that for all $j>J$
$$\left|j-M\left\lfloor \frac{j}{M}\right\rfloor \right|\leq \frac{\epsilon}{8}j$$
Consider some $w \in \R^d$ such that $\|w\|_1=j >J$. We can find $u \in F$ so that
$\|\frac{w}{\|w\|_1}-u\|_1 \le \frac{\epsilon}{8}$.
Let $\tilde k := \lfloor\frac{j}{M}\rfloor$.
Then
\begin{equation}\label{eq:DHt_approx1}
\|w-\tilde k M u\|_1 \le \| w - j u\|_1 + |j- \tilde k M| \|u\|_1 \le \frac{\epsilon}{4} j.
\end{equation}
Now observe that $| \DHt(x,w)| \le \|w\|_1$ for all $x \in X_r$, $w \in \ZD$. From the cocycle property \eqref{eq:DHt_coycle_relation} it follows that for all $n,m \in \ZD$, $x \in X_r$:
$$ \DHt(x,n) = \DHt(x,m) + \DHt(\sigma_m x, n-m).$$
Therefore for all $w', u' \in \mathbb{R}^d$
$$| \DHt(x,w') - \DHt(x,u')| \le \|w'-u'\|_1+2d.$$

Applying the above inequality with $w'=w$ and $u'= \tilde kMu$ it follows using \eqref{eq:DHt_approx1}
$$\left| \DHt(x,w) - \DHt(x,\tilde k M u)\right| \le \frac{\epsilon}{4}j+2d.$$

Also, since $\|v\|_\infty \le 1$, it follows using \eqref{eq:DHt_approx1} that
$$ \left|\langle w,v \rangle - \langle\tilde k M u,v \rangle\right| < \frac{\epsilon}{4} j$$
which yields that for sufficiently large $j$,
$$\frac{1}{j}\left|\DHt(x,w)-\langle w,v\rangle \right| < \epsilon.$$
This proves that $X^\prime \subset E_{\epsilon}^c$.
\end{proof}

\subsection{Maximal height functions}
For $\hat{x} \in Ht$ and a finite $F \subset \mathbb{Z}^d$, let \begin{equation*}
Ht_{\hat{x},F}:=\{\hat{y} \in Ht \;|\; \hat{y}_n=\hat{x}_n \text{ if } n \not\in F \}.
\end{equation*}

Consider the partial ordering on $Ht_{\hat{x},F}$ given by $\hat{y} \geq \hat{z}$ if $\hat{y}_n \geq \hat{z}_n$ for all $n \in \Z^d$.
\begin{lemma}\label{lem:heightmaximum}
Let $\hat{x}\in Ht$ be given. Then $(Ht_{\hat{x},F}, \geq)$ has a maximum. If the maximum is attained by the height function $\hat{y}$ then for all $n \in F$:
\begin{equation*}
\hat{y}_{n}= \min\{\hat{x}_{k}+||n-k||_1~:~ k \in \partial F\}.
\end{equation*}
\end{lemma}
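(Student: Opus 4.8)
The plan is to verify that the explicit formula
$$
\hat{y}_n := \begin{cases} \min\{\hat{x}_k + \|n-k\|_1 ~:~ k \in \partial F\} & n \in F \\ \hat{x}_n & n \notin F\end{cases}
$$
defines an element of $Ht_{\hat{x},F}$, and that it dominates every other element; since the partial ordering is antisymmetric this forces it to be \emph{the} maximum. I would first check well-definedness of the $\min$: $\partial F$ is nonempty and finite (as $F$ is finite and nonempty; if $F=\emptyset$ the statement is vacuous), so the minimum is attained. The crucial structural observation is a parity constraint: for any $\tilde x \in Ht$ and any $m,n$, the value $\tilde x_m - \tilde x_n$ has the same parity as $\|m-n\|_1$ — this is immediate from $|\tilde x_p - \tilde x_q|=1$ along edges and telescoping along a path, exactly the parity argument used in the proof of Lemma \ref{lem:homoclinic_heights}\eqref{item:homo4}. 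Consequently $\hat{x}_k + \|n-k\|_1 \equiv \hat{x}_n \pmod 2$ for \emph{every} $k$ (writing $\hat x_k = \hat x_n + (\hat x_k - \hat x_n)$ and using that $\hat x_k - \hat x_n \equiv \|n-k\|_1 \pmod 2$), so in fact all the quantities being minimized over $k \in \partial F$ share a common parity; this will make the edge condition $|\hat y_n - \hat y_m|=1$ easy to upgrade from an inequality.

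The main work is showing $\hat y \in Ht$, i.e. $|\hat y_n - \hat y_m| = 1$ whenever $\|n-m\|_1 = 1$. There are three cases. If $n,m \notin F$ this is inherited from $\hat x \in Ht$. If $n \in F$ and $m \notin F$, then $m \in \partial F$, so $\hat y_n \le \hat x_m + \|n-m\|_1 = \hat x_m + 1 = \hat y_m + 1$; for the reverse, pick $k \in \partial F$ achieving the minimum defining $\hat y_n$, so $\hat y_n = \hat x_k + \|n - k\|_1 \ge \hat x_k + \|m-k\|_1 - 1 \ge \hat x_m + (\text{something})$ — here I need that $\hat x_k + \|m - k\|_1 \ge \hat x_m$, which holds because $\hat x$ is a height function and hence $1$-Lipschitz with respect to the $\ell^1$ graph metric (again by telescoping $|\hat x_k - \hat x_m| \le \|k-m\|_1$). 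This gives $\hat y_n \ge \hat x_m - 1 = \hat y_m - 1$, so $|\hat y_n - \hat y_m| \le 1$, and parity excludes $0$. If $n,m \in F$, the triangle inequality $\|n-k\|_1 \le \|m-k\|_1 + 1$ gives $\hat y_n \le \hat y_m + 1$ and symmetrically $\hat y_m \le \hat y_n + 1$, so $|\hat y_n - \hat y_m| \le 1$, and parity again excludes the value $0$ (since $\|n-m\|_1 = 1$ is odd while $\hat y_n - \hat y_m$ would have to be even). Also $\hat y_n = \hat x_n$ for $n \notin F$ by construction, so $\hat y \in Ht_{\hat x, F}$.

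For maximality, let $\hat z \in Ht_{\hat x, F}$ be arbitrary and fix $n \in F$. For any $k \in \partial F$, since $\hat z$ agrees with $\hat x$ off $F$ we have $\hat z_k = \hat x_k$, and since $\hat z$ is $1$-Lipschitz in the $\ell^1$ metric, $\hat z_n \le \hat z_k + \|n-k\|_1 = \hat x_k + \|n-k\|_1$. Minimizing the right-hand side over $k \in \partial F$ yields $\hat z_n \le \hat y_n$. For $n \notin F$ we have $\hat z_n = \hat x_n = \hat y_n$ trivially. Hence $\hat z \le \hat y$ pointwise, so $\hat y$ is the maximum, and the displayed formula for its values on $F$ is exactly its definition. (The integer $N \in \mathbb{N}$ in the statement plays no role and can be ignored.) I expect the only mildly delicate point to be the careful bookkeeping in the mixed case $n \in F$, $m \in \partial F$, ensuring both inequalities are obtained before invoking parity; everything else is routine use of the Lipschitz and parity properties of height functions.
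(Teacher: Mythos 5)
Your proof is correct, and it takes a genuinely different route from the paper's. The paper's proof proceeds in three steps: first it shows that the pointwise maximum of any two elements of $Ht_{\hat{x},F}$ is again an element of $Ht_{\hat{x},F}$ (using the parity constraint of Lemma \ref{lem:homoclinic_heights}\eqref{item:homo4} to rule out a bad crossing when $\hat{w}_i = \hat{y}_i > \hat{z}_i$ while $\hat{w}_j = \hat{z}_j > \hat{y}_j$ across an edge), then invokes finiteness of $Ht_{\hat{x},F}$ to conclude a maximum exists, and only then characterizes it: the upper bound $\hat{y}_i \le \hat{x}_k + \|i-k\|_1$ comes from telescoping along a path, while the lower bound comes from a qualitative argument that the maximal element can have no interior local minimum, so there is a strictly decreasing path from any $i \in F$ to $\partial F$, along which the Lipschitz bound is saturated. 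You instead bypass the existence argument entirely by writing down the candidate formula, verifying directly that it defines a valid height function (Lipschitz bounds give $|\hat{y}_n - \hat{y}_m| \le 1$, and the parity of $\hat{x}_k + \|n-k\|_1$ modulo $2$ rules out equality), and then showing every $\hat{z} \in Ht_{\hat{x},F}$ is dominated by it via the same Lipschitz estimate. Your approach is more direct and self-contained; the paper's buys a small structural byproduct (that $Ht_{\hat{x},F}$ is closed under pointwise max, i.e., is a join-semilattice) that could be of independent use but is not needed for this lemma. Both rely essentially on the same two ingredients — $1$-Lipschitzness and the parity constraint — just organized differently. Your parenthetical remark that the $N$ appearing in the statement is vestigial (it should read ``and $F \subset \Z^d$ finite'') is correct; it is a typo in the paper.
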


\begin{proof}
We will first prove that given height functions $\hat{y}, \hat{z} \in Ht_{\hat{x},F}$ the function $\hat{w}$ defined by
\begin{eqnarray*}
\hat{w}_i:= \max(\hat{y}_{i}, \hat{z}_i).
\end{eqnarray*}
is an element of $Ht_{\hat{x},F}$.

To see that $\hat{w}$ is a valid height function, we will show that $|\hat w_i - \hat w_j|=1$ for all two adjacent sites $i,j\in \Z^d$.
If $(\hat{w}_i,\hat{w}_j)=(\hat{y}_i,\hat{y}_j)$ or $(\hat{w}_i,\hat{w}_j)=(\hat{z}_i,\hat{z}_j)$ then $| \hat w_i - \hat w_j| =1$ because $\hat y, \hat z \in Ht$.
Otherwise, we can assume without loss of generality that $\hat{w}_i= \hat{y}_i > \hat{z}_i$ and $\hat{w}_j= \hat{z}_j > \hat{y}_j$.
By Lemma \ref{lem:homoclinic_heights}, because $(\hat{y}, \hat{z})\in \Delta_{Ht}$ we have
$$\hat{y}_i- \hat{z}_i, \hat{y}_j -\hat{z}_j\in 2\Z.$$
Thus $\hat{y}_i \ge \hat{z}_i +2$ and $\hat{z}_j \ge \hat{y}_j +2$.
Since $\hat{y}, \hat{z}\in Ht$ we have:
$$ \hat y_j + 1 \ge \hat y_i \ge \hat z_i +2 \ge \hat z_j +1 \ge \hat y_j +3,$$
a contradiction.

We conclude that $\hat w \in Ht$.
Also $\hat{w}_i= \max(\hat{y}_i,\hat{z}_i)= \hat{x}_{i}$ for all $i \in F^c$. Hence $\hat{w}\in Ht_{\hat x,F}$.

Since $Ht_{\hat x,F}$ is finite, it has a maximum.

Suppose the maximum is attained by a height function $\hat{y}$. Let $i \in F$, $k \in \partial F$ and $(i_1=i), i_2, i_3, \ldots,i_p, (i_{p+1}=k)$ be a shortest path between $i$ and $k$. Then
\begin{equation*}
\hat{y}_i= \sum_{t=1}^p \hat{y}_{i_t}-\hat{y}_{i_{t+1}} +\hat{y}_k=\sum_{t=1}^p \hat{y}_{i_t}-\hat{y}_{i_{t+1}} +\hat{x}_k.
\end{equation*}
Therefore $\hat{y}_i\leq ||i-k||_1+\hat{x}_k$ which proves that
\begin{equation}\label{onesideofineq}
\hat{y}_i\leq \min\{\hat{x}_k+||i-k||_1~:~ k \in \partial F\}.
\end{equation}
For proving the reverse inequality, note that if $\hat{y}$ has a local minimum at some $n \in F$ then the height at $n$ can be increased. Since $\hat{y}$ is the maximum height function, for each $n\in F$ at least one of the adjacent sites $m$ must satisfy $\hat{y}_n-\hat{y}_m=1$. Iterating this argument, for all $i \in F$, we can choose a path $j_1, j_2 ,j_3 ,\ldots, j_{p+1}$, with $j_1=i$, $j_2,\ldots,j_p \in F$, $j_{p+1} \in \partial F$ along which $\hat{y}$ is increasing: $\hat{y}_{j_t}- \hat{y}_{j_{t+1}}=1$ for all $t\in \{1, 2,\ldots, p \}$. Then
\begin{equation*}
\hat{y}_i=\sum_{t=1}^p \hat{y}_{j_t}-\hat{y}_{j_{t+1}} +\hat{y}_{j_{p+1}}\geq ||i-j_{p+1}||_1+\hat{y}_{j_{p+1}}.
\end{equation*}
Combining with the inequality $\eqref{onesideofineq}$, we get
\begin{equation*}
\hat{y}_i= \min\{\hat{x}_k+||i-k||_1~:~ k \in \partial F\}.
\end{equation*}
\end{proof}

Consider a shift-invariant Markov cocycle $M \in \mathcal{M}^{\sigma}_{X_r}$. Recall that by Corollary \ref{cor:X_r_cocycle_decomp} there exists $\alpha \in \mathbb{R}$ and a Gibbs cocycle $M_0 \in \mathcal{G}^{\sigma}_{X_r}$ compatible with a shift-invariant nearest neighbor interaction such that $M=M_0 + \alpha \hat{M}$. The following lemma is based on the idea that in the ``non-Gibbsian'' case $\alpha \ne 0$, whenever $\hat{y}$ is much bigger than $\hat{x}$, $M(x,y)$ is roughly $\alpha$ times the `volume' of the $(d+1)$-dimensional `shape' bounded between the graphs of $\hat{y}$ and $\hat{x}$ in $\Z^d\times \Z$.

For $N \in \mathbb{N}$, let
\begin{equation}\label{eq:D_N_def}
D_N:= \left\{n\in \Z^d~:~ \|n\|_1 \leq N\right\}
\end{equation}
be the ball of radius $N$ centered at the origin in the standard Cayley graph of $\ZD$. Also, denote:
\begin{equation}
S_N := \left\{ n \in\ZD ~:~ \|n\|_1=N \right\}
\end{equation}
Note that $S_N = \partial (D_N^c) = \partial D_{N-1}$.
\begin{lemma} \label{lem:bigmeasure}
Let $M=M_0 + \alpha \hat{M}$ be a shift-invariant Markov cocycle on $X^{(d)}_r$ where $M_0 \in \mathcal{G}^{\sigma}_{X_r^{(d)}}$ is a Gibbs cocycle compatible with a shift-invariant nearest neighbor interaction, $\hat{M}$ is the Markov cocycle given by \eqref{eq:hat_M} and $\alpha >0$. Then there exist a positive constant $c_1>0$ (depending only on $d$) and another positive constant $c_2>0$ (depending only on $d$ and $M_0$) such that for all $N \in \mathbb{N}$
\begin{equation*}
M(x,y)\geq c_1\alpha (\hat{y}_0-\hat{x}_0)^{d+1}- c_2\cdot N^{d}
\end{equation*}
for all $(\hat{x}, \hat{y}) \in \Delta_{Ht}$ satisfying $\hat x \le \hat y$, $x=\phi(\hat x)$, $y = \phi(\hat y)$ and $x|_{D_N^c}= y|_{D_N^c}$.

\end{lemma}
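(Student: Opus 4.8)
The plan is to use the decomposition $M=M_0+\alpha\hat M$ given in the hypothesis and to estimate the two summands separately: I will bound $\hat M(x,y)$ from below by a ``volume of a discrete cone'' argument, using only that height functions have slope at most $1$ and that $\hat x\le\hat y$, obtaining $\hat M(x,y)\ge c_1(\hat y_0-\hat x_0)^{d+1}$ for a dimensional constant $c_1$; and I will bound $|M_0(x,y)|$ from above by $c_2N^d$, using that $M_0$ comes from a nearest neighbor interaction while $x$ and $y$ agree off $D_N$.

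For the first bound, recall from \eqref{eq:hat_M} that $\hat M(x,y)=\sum_{n\in\ZD}(\hat y_n-\hat x_n)$, a sum with only finitely many nonzero terms (since $(\hat x,\hat y)\in\Delta_{Ht}$), all of which are nonnegative (since $\hat x\le\hat y$). Set $h:=\hat y_0-\hat x_0\ge 0$. Since $\hat x,\hat y\in Ht$, running along a shortest lattice path from $0$ to $n$ gives $|\hat x_n-\hat x_0|\le\|n\|_1$ and $|\hat y_n-\hat y_0|\le\|n\|_1$ for every $n\in\ZD$; hence $\hat y_n-\hat x_n\ge h-2\|n\|_1\ge h/2$ whenever $\|n\|_1\le h/4$. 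Discarding the nonnegative terms with $\|n\|_1>h/4$,
$$\hat M(x,y)\ \ge\ \sum_{\|n\|_1\le h/4}\bigl(\hat y_n-\hat x_n\bigr)\ \ge\ \frac{h}{2}\,\bigl|D_{\lfloor h/4\rfloor}\bigr|\ \ge\ c_1\,h^{d+1},$$
where the last step uses the elementary bound $|D_m|\ge c_d'(m+1)^d$ for a dimensional constant $c_d'>0$ (so one may take $c_1=c_d'/(2\cdot 4^d)$).

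For the second bound, fix a shift-invariant nearest neighbor interaction $\psi$ with $M_0=M_\psi$, so that $M_0(x,y)=\sum_W\bigl(\psi(y|_W)-\psi(x|_W)\bigr)$, the sum ranging over singletons and edges $W$ of $\ZD$. The hypothesis $x|_{D_N^c}=y|_{D_N^c}$ forces $\{n:x_n\ne y_n\}\subseteq D_N$, so a summand is nonzero only when $W$ meets $D_N$; every such $W$ lies inside $D_{N+1}$, so the number of such singletons and edges is at most $C_dN^d$ for a dimensional constant $C_d$, and each contributes at most $2\max|\psi|$ in absolute value, whence $|M_0(x,y)|\le c_2N^d$ with $c_2$ depending only on $d$ and $M_0$. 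Combining the two estimates,
$$M(x,y)=M_0(x,y)+\alpha\,\hat M(x,y)\ \ge\ c_1\alpha(\hat y_0-\hat x_0)^{d+1}-c_2N^d,$$
which is the assertion. I do not foresee any genuine obstacle here: the only points requiring care are the elementary discrete cone estimate $\frac{h}{2}|D_{\lfloor h/4\rfloor}|\ge c_1h^{d+1}$ and the bookkeeping of the stated dependence of $c_1$ on $d$ alone and of $c_2$ on $d$ and $M_0$; note in particular that nothing beyond the inclusion $\{x\ne y\}\subseteq D_N$, which is immediate from the hypothesis, is needed to control $M_0$.
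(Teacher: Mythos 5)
Your proof is correct and takes essentially the same approach as the paper: decompose $M=M_0+\alpha\hat M$, bound $\hat M$ from below via the Lipschitz ($1$-slope) property of height functions applied to the discrete cone around $0$, and bound $|M_0|$ by the locality of the nearest neighbor interaction together with $|D_N|=O(N^d)$. The only difference is cosmetic: the paper sums over the shells $S_j$ and bounds $|S_j|$ by a binomial coefficient, then restricts to the middle third of the shells, whereas you pick the single ``fat'' ball $D_{\lfloor h/4\rfloor}$ on which $\hat y_n-\hat x_n\ge h/2$ and use $|D_m|\gtrsim m^d$ — a slightly cleaner execution of the identical volume estimate.
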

\begin{proof}
Let $M=M_0 + \alpha \hat{M}$, $(x,y) \in \Delta_{X_r}$ and $(\hat{x},\hat{y}) \in \Delta_{Ht}$ be as given in the lemma.
First we show that there exists a suitable constant $c_1>0$ (depending on $d$)
so that
\begin{equation}\label{eq:hat_M_estimate1}
\hat{M}(x,y) \ge c_1 (\hat{y}_0-\hat{x}_0)^{d+1}.
\end{equation}
Assume that $\hat{y}_0-\hat{x}_0>0$.
Denote:
\begin{equation}
K:= \frac{\hat{y}_0- \hat{x}_0}{2}
\end{equation}
Recall that $(\hat{x},\hat{y}) \in \Delta_{Ht}$, so by Lemma \ref{lem:homoclinic_heights}, $\hat{y}_n - \hat{x}_n \in 2\Z$ for all $n \in \ZD$. In particular, $K$ is an integer.
Since $\hat{y}_n- \hat{x}_n \ge 0$ we have:
$$ \hat M(x,y) \ge
\sum_{n \in D_N}(\hat{y}_n- \hat{x}_n)\geq \sum _{j=0}^{K}\sum_{n\in S_j}(\hat{y}_n- \hat{x}_n).
$$
Since $\hat{y}_n-\hat{x}_n\geq \hat{y}_0- \hat{x}_0-2\|n\|_1$:
$$\sum _{j=0}^{K}\sum_{n\in S_j}(\hat{y}_n- \hat{x}_n) \geq \sum_{j=0}^{K}|S_j|(\hat{y}_0- \hat{x}_0-2j).$$

Finally the estimates
$$|S_j | \geq |\{n\in S_j~:~ n \geq 0\}| ={{j+d-1}\choose{d-1}} \ge \frac{1}{d!}j^{d-1}$$
give
\begin{eqnarray*}
\hat M(x,y) \ge & \frac{1}{d!}\sum_{j=0}^K j^{d-1}(2K-2j) &\ge
\frac{1}{d!}\sum_{j=\lceil K/3\rceil}^{\lfloor 2K/3 \rfloor} j^{d-1}(2K-2j) \\
\ge &\frac{1}{d!}\frac{K}{3} \left(\frac{K}{3}\right)^{d-1}\frac{2K}{3}&\ge (6d)^{-(d+1)} (\hat{y}_0 - \hat{x}_0)^{d+1}
\end{eqnarray*}
proving \eqref{eq:hat_M_estimate1} with $c_1 = (6d)^{-(d+1)}$.

Let $\phi$ be the shift-invariant nearest neighbour interaction corresponding to $M_0$. We will show that there exists a suitable constant $c_2 >0$ (depending on $M_0$ and $d$) so that
$|M_0(x,y)| \le c_2 N^d$. By expressing $M_0$ in terms of its interaction we see that
$$|M_0(x,y)| \leq \sum_{C \cap D_N \ne \emptyset} |\phi(y|_C) - \phi(x|_C)|\leq \sum_{n \in D_N} \sum_{C \cap \{n\} \ne \emptyset} |\phi(x|_C) - \phi(y|_C)|,$$
where $C$ ranges over all the cliques (edges and vertices) in $\mathbb{Z}^d$. It follows that
$|M_0(x,y)| \le c_2'|D_N|$
where
$$c_2' := (4d+2)\sup\left\{|\phi(x|_C)|~:~ x \in X_r\text{ and }C \text{ is a clique in }\Z^d\right\}.$$
Since $|D_N| \le (2N+1)^d$, it follows that
$|M_0(x,y)| \le c_2N^d$ with $c_2:=3^dc_2'$.

Putting everything together, we conclude that
$$M(x,y) \ge \alpha\hat{M}(x,y) -|M_0(x,y)| \ge c_1\alpha (\hat{y}_0-\hat{x}_0)^{d+1}- c_2\cdot N^{d}.$$

\end{proof}
Under the same hypothesis except with $\alpha<0$, we get,
$$M(x,y)\leq c_1\alpha \cdot (\hat{y}_0-\hat{x}_0)^{d+1}+ c_2 N^d$$ for the same constants $c_1, c_2>0$.

\begin{lemma}\label{lem:trivialsupport}
Let $\mu$ be a shift-invariant measure on $X_r$ and $n \in \mathbb{Z}^d$ such that $\|n\|_1=1$.
If
$$\left| \int \DHt(x,n)d\mu(x) \right| =1,$$
then $\mu$ is frozen.
\end{lemma}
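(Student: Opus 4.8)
The plan is to show that if $\left|\int \DHt(x,n)\,d\mu(x)\right| = 1$ for some unit vector $n$, then $\mu$-almost every $x$ is frozen, i.e.\ its homoclinic class is a singleton. By shift-invariance and symmetry we may assume $n = e_1$ and $\int \DHt(x,e_1)\,d\mu(x) = 1$; the other case follows by replacing the cocycle with its negative. The slope vector $v = (v_1,\ldots,v_d)$ from Lemma \ref{lem:height_slope_limit} then has $v_1 = 1$, and since $|\DHt(x,e_j)| = 1$ pointwise for every $j$, we automatically get $|v_j| \le 1$ for all $j$; in particular $v = e_1$ would force the slope to be extremal in the $e_1$ direction. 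The key pointwise consequence I want to extract is this: since $\DHt(x,e_1) \in \{\pm 1\}$ always and $\int \DHt(x,e_1)\,d\mu = 1$, we must have $\DHt(x,e_1) = 1$ for $\mu$-a.e.\ $x$. By shift-invariance this says $\DHt(\sigma_m x, e_1) = 1$ for $\mu$-a.e.\ $x$ and every fixed $m \in \mathbb{Z}^d$; intersecting over a countable dense set of translates, there is a $\mu$-full set $X'$ on which $\hat{x}_{m+e_1} - \hat{x}_m = 1$ for \emph{every} $m \in \mathbb{Z}^d$ (here $\hat{x}$ is any height lift of $x$, and the difference $\hat{x}_{m+e_1}-\hat{x}_m$ is independent of the lift).

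Next I would argue that this rigidity in the $e_1$ direction forces $x$ to be frozen. Fix $x \in X'$ and suppose, for contradiction, that the homoclinic class of $x$ is not a singleton: since $X_r$ has the pivot property (Proposition \ref{prop:X_r_pivot_property}), there is then a pivot move $(x,y)$, say differing only at a single site $n_0$. By Lemma \ref{lem:homoclinic_heights}(\ref{item:homo5}) we can pick lifts $\hat{x}, \hat{y} \in Ht$ agreeing off $n_0$ with $|\hat{x}_{n_0} - \hat{y}_{n_0}| = 2$. But both $\hat{x}$ and $\hat{y}$ are genuine height functions, so $\hat{y}_{n_0} - \hat{y}_{n_0 - e_1} \in \{\pm 1\}$; on the other hand $\hat{y}_{n_0 - e_1} = \hat{x}_{n_0 - e_1}$ and $\hat{x}_{n_0} - \hat{x}_{n_0-e_1} = 1$ (this is the constraint defining $X'$, applied at $m = n_0 - e_1$). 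Hence $\hat{y}_{n_0} = \hat{x}_{n_0-e_1} + \hat{y}_{n_0} - \hat{y}_{n_0-e_1} \in \{\hat{x}_{n_0} - 2,\ \hat{x}_{n_0}\}$; similarly looking at the edge $(n_0, n_0 + e_1)$ and using $\hat{x}_{n_0+e_1} - \hat{x}_{n_0} = 1$ forces $\hat{y}_{n_0} \in \{\hat{x}_{n_0},\ \hat{x}_{n_0}+2\}$. The only common value is $\hat{y}_{n_0} = \hat{x}_{n_0}$, contradicting $|\hat{x}_{n_0} - \hat{y}_{n_0}| = 2$. So no pivot move is possible at $x$, and by the pivot property the homoclinic class of $x$ is $\{x\}$.

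Therefore $\mu$-a.e.\ point is frozen, which is exactly the statement that $\mu$ is frozen. The main technical point to be careful about is the passage from the integral condition to the pointwise/almost-everywhere condition $\DHt(\cdot, e_1) \equiv 1$ on a translation-stable full-measure set: this uses only that $\DHt(x,e_1)$ takes values in $\{\pm 1\}$ together with shift-invariance of $\mu$, and the countable intersection of translates of a full set is full. I do not expect to need Lemma \ref{lem:height_slope_limit} or the ergodic cocycle lemma at all for this particular statement — those are for the harder non-extremal case handled elsewhere — so the whole argument is essentially combinatorial once the a.e.\ rigidity is in hand; the only genuine subtlety is making sure the height-function difference arguments in the last paragraph are orientation-consistent, i.e.\ that one really does get incompatible constraints on $\hat{y}_{n_0}$ from the two neighboring edges along the $e_1$ axis.
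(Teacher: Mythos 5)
Your proposal is correct and follows essentially the same route as the paper's proof: the integral condition together with $\DHt(\cdot,n)\in\{\pm 1\}$ forces $\DHt(\cdot,n)$ to be a.e.\ constant, shift-invariance upgrades this to a $\mu$-full set on which the slope in the $n$-direction is rigid at every site, and this rigidity rules out any pivot move. The only difference is presentational: the paper phrases the rigidity directly as $x_{m\pm n}=x_m\mp 1\bmod r$ and then asserts frozenness in one line, whereas you pass to height-function lifts and spell out the incompatibility of the two edge constraints at a putative pivot site $n_0$; you are also right that Lemma \ref{lem:height_slope_limit} and the ergodic cocycle lemma play no role here.
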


\begin{proof}
If $\left| \int \DHt(x,n)d\mu(x) \right| =1$ then either $\mu(\{ x \in X_r~:~ x_0 - x_{n} = 1 \mod r\})=1$ or $\mu(\{ x \in X_r~:~ x_0 - x_{n} = -1 \mod r\})=1$.
In the first case it follows that $\mu$-almost surely $x_{m-n}=x_m+1 \mod r$ and $x_{m+n}=x_m-1\mod r$ for all $m \in \mathbb{Z}^d$, so $\mu$-almost surely $x$ is frozen.
The second case is similar.
\end{proof}

In the course of our proof, it will be convenient to restrict to ergodic shift-invariant Markov random fields.
The following claim justifies this:
\begin{thm} \label{lem:MRF_ergodic_decomp}
All shift-invariant Markov random fields $\mu$ with specification $\Theta$ are in the closure of the convex hull of the
ergodic shift-invariant Markov random fields with specification $\Theta$.
\end{thm}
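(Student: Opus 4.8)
The plan is to use the ergodic decomposition theorem for the $\Z^d$-action on $(\A^{\Z^d},\sigma)$, observing that the Markov-specification property is preserved by the components of the decomposition, and then approximate an arbitrary shift-invariant Markov random field $\mu$ with specification $\Theta$ by finite convex combinations of ergodic ones.

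Concretely, I would proceed as follows. First, recall that the space of shift-invariant Borel probability measures on $\A^{\Z^d}$ is a compact, metrizable Choquet simplex in the weak-$*$ topology, whose extreme points are precisely the ergodic shift-invariant measures; hence every shift-invariant $\mu$ is the barycenter of a unique probability measure $\rho$ supported on the ergodic measures (the ergodic decomposition). Write $\mu = \int \nu \, d\rho(\nu)$. The key step is to show that $\rho$-almost every $\nu$ is again a Markov random field with the \emph{same} specification $\Theta$. For this I would argue as follows: being a Markov random field with specification $\Theta$ is equivalent to the countable family of linear constraints
\begin{equation*}
\nu\bigl([a]_F \cap [x]_{\partial F}\bigr) = \Theta_{F,x}(a)\cdot \nu\bigl([x]_{\partial F}\bigr)
\end{equation*}
ranging over all finite $F \subset \Z^d$, all $a \in \B_F(X_r)$, and all $x \in \B_{\partial F}(X_r)$, where we interpret $\Theta_{F,x}(a)$ as a fixed constant and both sides are \emph{affine} (in fact linear) and weak-$*$ continuous functions of the measure (they are evaluations on clopen cylinder sets). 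Since $\mu$ satisfies all these equalities and $\mu$ is the average of the $\nu$'s over $\rho$, and since each constraint is an equality of the form $\int (\text{continuous function}) \, d\nu = \int (\text{continuous function})\, d\nu$ that holds on average, a standard argument (an equality of averages of a function against its own nonnegative counterpart, or simply: the difference of the two sides integrates to zero over $\rho$ for each of countably many constraints, and one can also test against the two-sided inequality coming from the support condition) forces $\rho$-a.e.\ $\nu$ to satisfy every single constraint. Thus $\rho$-a.e.\ $\nu$ is a shift-invariant Markov random field with specification $\Theta$.

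With this in hand, the conclusion follows from a routine approximation: since $\rho$ is a Borel probability measure on the ergodic shift-invariant MRFs with specification $\Theta$, and $\mu = \int \nu\, d\rho(\nu)$ is its barycenter, one approximates $\rho$ weak-$*$ by finitely supported measures $\rho_n = \sum_j c_{n,j} \delta_{\nu_{n,j}}$ with $\nu_{n,j}$ ergodic MRFs with specification $\Theta$; the barycenters $\mu_n = \sum_j c_{n,j}\nu_{n,j}$ are finite convex combinations of such ergodic measures, and $\mu_n \to \mu$ weak-$*$. Hence $\mu$ lies in the closure of the convex hull of the ergodic shift-invariant Markov random fields with specification $\Theta$.

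The main obstacle is the measurability bookkeeping in the middle step: one must be careful that the set of ergodic measures satisfying a given cylinder-equality is measurable (it is, since the maps $\nu \mapsto \nu([b]_B)$ are continuous hence Borel on the space of measures) and that one can pass from ``each of countably many constraints holds $\rho$-a.e.'' to ``$\rho$-a.e.\ all constraints hold simultaneously'' — which is immediate by countable intersection. A subtlety worth a sentence is that the specification $\Theta$ is defined only on $X = X_r$ (or more generally on $\mathit{supp}(\mu)$), so the components $\nu$ have support inside $X_r$ as well (support of an ergodic component of $\mu$ is contained in $\mathit{supp}(\mu) \subseteq X_r$), and the constraints only need to be imposed for admissible boundary conditions; this causes no difficulty since $\nu([x]_{\partial F}) = 0$ makes the corresponding constraint vacuous. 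No genuinely hard analysis is needed here — the theorem is essentially a soft consequence of the affine/continuous nature of the defining constraints together with the ergodic decomposition.
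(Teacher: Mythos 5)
The paper itself does not supply a proof; it cites Theorem~14.14 of Georgii's \emph{Gibbs Measures and Phase Transitions}, and your high-level plan (ergodic decomposition, check that the ergodic components keep the specification, then approximate the barycenter by finitely supported measures) is indeed the shape of that argument. The final approximation step is correct and routine.

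However, there is a genuine gap in your middle step, and it is not merely ``measurability bookkeeping.'' You argue that because each condition $\nu([a]_F\cap[b]_B)=\Theta_{F,b|_{\partial F}}(a)\,\nu([b]_B)$ is an affine equality in $\nu$ that holds for $\mu=\int\nu\,d\rho(\nu)$, it must hold for $\rho$-a.e.\ $\nu$. That implication is false as stated: if $L$ is an affine functional and $\int L(\nu)\,d\rho(\nu)=0$, this does not force $L(\nu)=0$ $\rho$-a.e., because $L$ has no definite sign (for instance $L(\nu)<0$ whenever $\nu([a]_F\cap[b]_B)=0$ but $\nu([b]_B)>0$ and $\Theta_{F,b|_{\partial F}}(a)>0$). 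The parenthetical appeal to ``the two-sided inequality from the support condition'' does not supply the missing sign condition. Some decompositions of $\mu$ as a barycenter certainly \emph{do} destroy the specification; what you need is that the \emph{ergodic} decomposition in particular preserves it, and that is where the real content lies.

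The missing ingredient is the following fact, which is the heart of Georgii's proof: for a shift-invariant $\mu$, the shift-invariant $\sigma$-algebra $\mathcal{I}$ is, modulo $\mu$-null sets, contained in $\mathcal{F}_{\Lambda^c}$ for every finite $\Lambda\subset\Z^d$ (a backward-martingale/tail-field argument; see Georgii, Chapter~14). Once one realizes the ergodic decomposition as $\nu_\omega=\mu(\cdot\mid\mathcal{I})(\omega)$, the Markov/Gibbs property of $\mu$ is a statement about $\mathbf{E}_\mu[\,\cdot\mid\mathcal{F}_{\Lambda^c}\,]$, and since $\mathcal{I}\subset\mathcal{F}_{\Lambda^c}$ (mod~$\mu$) the tower property of conditional expectation transfers those local conditional probabilities unchanged to $\mu$-a.e.\ $\nu_\omega$. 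That is what makes $\rho$-a.e.\ $\nu$ a Markov random field with the same $\Theta$. Without invoking this inclusion (or an equivalent argument), the passage from ``$\mu$ satisfies the constraints'' to ``a.e.\ ergodic component satisfies them'' is unjustified. You should either insert this lemma and the tower-property step, or simply cite the result as the paper does.
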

\begin{proof}
See Theorem $14.14$ in \cite{Georgii}.
\end{proof}

We now proceed to complete the proof of Proposition \ref{prop:non_gibbs_frozen}.
\begin{proof}
Since a convex combination of frozen measures is frozen, by Theorem \ref{lem:MRF_ergodic_decomp} it suffices to prove that any ergodic Markov random field $\mu$ adapted to $X_r$ with its Radon-Nikodym cocycle equal to $e^M$ on its support where $M=M_0 + \alpha \hat{M}$ (as in Corollary \ref{cor:X_r_cocycle_decomp}) and $\alpha \ne 0$ is frozen.

Choose any $\mu$ satisfying the above assumptions, assuming without loss of generality that $\alpha >0$. Let
\begin{equation}\label{eq:v_j_def}
v_j:= \int \DHt(x,e_j) d\mu(x) \mbox{ for } j=1,\ldots,d.
\end{equation}
If $|v_j| = 1$ for some $1 \le j \le d$, it follows from Lemma \ref{lem:trivialsupport} that $\mu$ is frozen. We can thus assume that $|v_j| <1$ for all $1 \le j \le d$. Choose $\epsilon >0$ satisfying $\epsilon < \frac{1}{4}\min\{ 1- |v_j|~:~ 1 \le j \le d\}$.

For $k \in \N$, let 

$$ A_k = \left\{ x \in X_r ~:~ \sup_{ \|w\|_1=k}\frac{1}{k}\left| \DHt(x,w) - \langle w,v\rangle\right| < \epsilon \right\}.$$
By Lemma \ref{lem:height_slope_limit} for sufficiently large $k$, $\mu(A_k)> 1- \epsilon$.
Consider $x \in A_k\cap supp(\mu)$ such that
$\mu(A_k \mid [x]_{\partial D_{k-1}}) > 1-2\epsilon$.
Then for all $y \in A_k$ satisfying $y|_{D_{k-1}^c}=x|_{D_{k-1}^c}$ and $n \in \partial D_{k-1}$
\begin{equation}\label{eq:x_low}
-\DHt(y,n) = \hat{y}_0 - \hat{y}_n \le -\langle n,v\rangle + \epsilon k < (1-\epsilon)k.
\end{equation}
Choose $\hat{z}$ which is maximal in $Ht_{\hat{x},D_{k-1}}$ and let $z = \phi_r(\hat{z})$.
It follows from Lemma \ref{lem:heightmaximum} that for some $n \in \partial D_{k-1}$,
\begin{equation}\label{eq:y_high}
\hat{z}_0 - \hat{x}_n = \|n\|_1=k.
\end{equation}

Since $x|_{D_{k-1}^c} = y|_{D_{k-1}^c} = z|_{D_{k-1}^c}$ we can assume by Lemma \ref{lem:homoclinic_heights} that
\begin{equation}\label{eq:x_y_z_same_n}
\hat{x}|_{D_{k-1}^c}=\hat{y}|_{D_{k-1}^c}=\hat{z}|_{D_{k-1}^c}.
\end{equation}
\eqref{eq:x_low} together with \eqref{eq:y_high} and \eqref{eq:x_y_z_same_n} imply that $\hat{z}_0 -\hat{y}_0 > k \epsilon$ for all $ y \in A_k$ satisfying $y|_{D_{k-1}^c}= z|_{D_{k-1}^c}$. Thus, by Lemma \ref{lem:bigmeasure}
$$M(y,z) > c_1\alpha(k\cdot \epsilon)^{d+1}-c_2k^d > c_3 k^{d+1},$$
the last inequality holding for some $c_3>0$, when $k$ is sufficiently large.

It follows that
$$ \mu([z]_{D_{k-1}} \mid [x]_{\partial D_{k-1}}) \ge \mu([y]_{D_{k-1}} \mid [x]_{\partial D_{k-1}})e^{c_3 k^{d+1}}.$$
We can write $A_k \cap [x]_{\partial D_{k-1}} = \bigcup_y ([y]_{D_{k-1}} \cap [x]_{\partial D_{k-1}})$, where the union is over all $y \in\B_{D_{k-1}}(X_r)$ such that $[y]_{D_{k-1}}\cap A_k \cap [x]_{\partial D_{k-1}}\neq \emptyset$.
There are at most $|\mathcal{B}_{D_{k-1} \cup \partial D_{k-1}}(X_r)| = e^{O(k^d)}$ terms in the union above so
$$ \mu(A_k \mid [x]_{\partial D_{k-1}}) \le e^{O(k^d)}e^{-c_3 k^{d+1}} \mu([z]_{D_{k-1}} \mid [x]_{\partial D_{k-1}}) .$$
It follows that $\mu(A_k \mid [x]_{\partial D_{k-1}}) \to 0$ as $k \to \infty$. For $k$ sufficiently large this would contradict our choice of $x$, for which $\mu(A_k \mid [x]_{\partial D_{k-1}}) > 1-2\epsilon$.
\end{proof}

\section{Non-frozen adapted shift-invariant Markov random fields on $X_r$ are fully-supported}\label{non_frozen_adapted_MRF_X_r_fully_supported}
We have concluded that any shift-invariant Markov random field which is adapted with respect to $X_r$ is a Gibbs measure for some shift-invariant nearest neighbor interaction. Our next goal is to show that any such measure must be fully-supported on $X_r$.

\begin{prop}\label{prop:MRF_X_r_full_or_frozen}
Let $r \ne 1,2,4$ be a given positive integer, and
let $\mu$ be a shift-invariant Markov random field adapted with respect to $X_r$. Then either $supp(\mu)=X_r$ or $\mu$ is frozen.
\end{prop}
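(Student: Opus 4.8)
The plan is to assume $\mu$ is not frozen (otherwise there is nothing to prove) and deduce $supp(\mu)=X_r$, by first reducing to an ergodic Markov random field and then performing a ``height--function surgery''. For the reduction, I would use the ergodic decomposition of shift-invariant Markov random fields with a fixed specification (Theorem \ref{lem:MRF_ergodic_decomp}): the ergodic components are again shift-invariant Markov random fields with the same specification, and since their supports lie in $X_r$ and every pattern of $X_r$ receives positive conditional probability from the specification, each component is still adapted to $X_r$. As $supp(\mu)$ is the closure of the union of the supports of the components and the set of frozen points is closed, if \emph{all} components were frozen then $\mu$ would be frozen; so some ergodic component $\nu$ is not frozen, and since $supp(\nu)\subseteq supp(\mu)$ it is enough to prove $supp(\nu)=X_r$. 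Hence I may assume $\mu$ itself is ergodic and non-frozen.

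Next I would bring in the ``slope'' $v=(v_1,\dots,v_d)$ of $\mu$, with $v_j:=\int\DHt(x,e_j)\,d\mu(x)$. By Lemma \ref{lem:trivialsupport}, if $|v_j|=1$ for some $j$ then $\mu$ is frozen, contrary to assumption; so $|v_j|<1$ for all $j$, and I fix $\epsilon>0$ with $\epsilon<\tfrac14\min_j(1-|v_j|)$. By Lemma \ref{lem:height_slope_limit}, for a set of $x$ of full measure (hence meeting $supp(\mu)$) and all large $k$, any lift $\hat x\in Ht$ of $x$ satisfies $|\hat x_n-\hat x_0|\le k(1-3\epsilon)$ for all $n\in S_{k-1}$; in other words the boundary profile of $\hat x$ on $\partial D_{k-1}$ is uniformly sub-extremal. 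I fix such an $x\in supp(\mu)$.

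The heart of the argument is the surgery step. Given an arbitrary finite pattern $q$ occurring in $X_r$, which (translating $q$, legitimate since $X_r$ and $supp(\mu)$ are shift-invariant, and after a further translation when $r$ is even to match parities of lifts) I may assume is a pattern on a box $D_m$ with a fixed lift $\hat q\in Ht(D_m)$, I would for $k\gg m$ construct a height function $\hat y$ on $\ZD$ equal to $\hat x$ outside $D_{k-1}$ and equal to $\hat q$ on $D_m$. A natural candidate on $D_{k-1}$ is $\hat y_n:=\min\bigl(\min_{a\in D_m}(\hat q_a+\|n-a\|_1),\ \min_{b\in S_{k-1}}(\hat x_b+\|n-b\|_1)\bigr)$: a minimum of ``cones'' $n\mapsto c_p+\|n-p\|_1$ with mutually compatible parities is automatically a valid height function, and because $\hat q$ on $D_m$ is bounded while the profile of $\hat x$ on $S_{k-1}$ is sub-extremal (and $D_m$ and $S_{k-1}$ are at distance at least $(k-1)-m$), for $k$ large the interior cones attain the minimum on $D_m$ and the boundary cones attain it on $S_{k-1}$, so $\hat y|_{D_m}=\hat q$, $\hat y|_{S_{k-1}}=\hat x|_{S_{k-1}}$, and $\hat y$ glues consistently to $\hat x$ outside, i.e. $\hat y\in Ht$. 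Then $y:=\phi_r(\hat y)\in X_r$ agrees with $x$ off $D_{k-1}$, so $(x,y)\in\Delta_{X_r}$, while $y|_{D_m}=q$; adaptedness of $\mu$ and $x\in supp(\mu)$ force $y\in supp(\mu)$, whence $q\in\B(supp(\mu))$. Since $q$ was an arbitrary pattern of $X_r$, this yields $supp(\mu)=X_r$.

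I expect the main work to be in the surgery step: checking that the explicit interpolant is genuinely a height function realising the prescribed interior pattern and the prescribed sub-extremal boundary, which requires making the estimates coming from Lemma \ref{lem:height_slope_limit} quantitatively good enough that a \emph{fixed} pattern $q$ fits for \emph{all} sufficiently large $k$, together with a careful bookkeeping of lifts and parities of the height functions (the only genuinely $r$-sensitive subtlety being the parity obstruction when $r$ is even, which is removed by a translation of $q$). A minor supporting point, used to justify the reduction, is that an ergodic component of a shift-invariant Markov random field adapted to $X_r$ is again adapted to $X_r$, which is where the structure of the specification enters.
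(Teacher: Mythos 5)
Your proposal is correct and follows the paper's overall strategy — reduction to an ergodic component, exclusion of extremal slope via Lemma~\ref{lem:trivialsupport}, sub-extremality of typical boundary profiles via Lemma~\ref{lem:height_slope_limit}, and a height-function surgery that inserts an arbitrary admissible pattern inside a large box while preserving the exterior — but with a genuinely different implementation of the surgery step. The paper carries out the surgery through three combinatorial lemmas (Lemmas~\ref{lem:steeptoflat}, \ref{lem:flat_outside} and \ref{lem:notsteeptoanything}), which inductively flatten the boundary profile by repeatedly lowering local maxima and raising minima, then flatten the boundary of the inner pattern, and finally splice the two. You replace all of this with a single explicit minimum-of-cones interpolant $\hat y_n=\min\bigl(\min_{a\in D_m}(\hat q_a+\|n-a\|_1),\ \min_{b\in S_{k-1}}(\hat x_b+\|n-b\|_1)\bigr)$. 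This is a standard and valid device: a minimum of cones with mutually compatible parities is automatically $1$-Lipschitz with the correct parity at each site, hence a height function, and the sub-extremality bound from Lemma~\ref{lem:height_slope_limit} ensures — after shifting $\hat q$ by a multiple of $r$ so that $\hat q_0$ lies within bounded distance of $\hat x_0$ and has the right parity, handling $r$ even by translating $q$ by $e_1$, exactly as in case (2) of Lemma~\ref{lem:notsteeptoanything} — that the interior cones realize the minimum on $D_m$ and the boundary cones realize it on $S_{k-1}$ for all large $k$, so the interpolant glues correctly to $\hat x$ outside $D_{k-1}$. Your formula buys conciseness at the cost of carefully tracking the normalization of $\hat q$ and the quantitative separation of the two cone families; the paper's flattening argument is longer but more local and elementary. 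One point you make explicit that the paper leaves implicit: Lemma~\ref{lem:height_slope_limit} requires ergodicity, and the paper's proof of this proposition applies it to $\mu$ directly, whereas you spell out the reduction — ergodic components of an adapted shift-invariant Markov random field are again adapted (they share the specification, so positivity of conditional probabilities is preserved), the set of frozen points of $X_r$ is closed, hence a non-frozen $\mu$ has some non-frozen ergodic component $\nu$, and full support of $\nu$ forces $supp(\mu)=X_r$. This tightens a small gap in the paper's argument while following the same mathematical content.
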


Roughly speaking we shall show that for non-frozen shift-invariant Markov random fields the height function corresponding to a typical point is ``not very steep''. Given a height function that is ``not very steep'', there is enough flexibility to ``deform'' the height function while keeping the values fixed outside some finite set. 
For an adapted Markov random field, the ``deformed height function'' corresponds to a point in the support as well, which will be the key to proving the required result. Somewhat related methods can be found in Section 4.3 of \cite{schmidt_cohomology_SFT_1995}.

We first introduce some more notation. For $x \in X_r$ and a finite set $F \subset \Z^d$ denote:
\begin{equation}
Range_F(x) := \max_{n \in F} \DHt(x,n) - \min_{n \in F} \DHt(x,n).
\end{equation}


Given $A \subset \ZD$, $ \hat{x} \in Ht(A)$ and a finite set $F \subset A \subset \Z^d$, we define:
\begin{equation}
Range_F(\hat{x}) := \max_{n \in F}\hat{x}_n- \min_{n \in F}\hat{x}_n.
\end{equation}
It follows that if $\hat{x} \in Ht$ and $x \in X_r$ are such that $x=\phi_r(\hat{x})$ then for all finite $F\subset \ZD$, $Range_F(x) = Range_F(\hat{x})$.


\begin{lemma}\textbf{(``Extremal values of height obtained on the boundary'')}\label{lem:steeptoflat}
Let $F \subset \mathbb{Z}^d$ be a finite set and $\hat{x} \in Ht$
such that $Range_{\partial F}(\hat x) > 2$. Then there exists $\hat{y} \in Ht$ such that $\hat{y}_n =\hat{x}_n $ for all $n \in F^c$ and
\begin{equation*}
Range_{ F}(\hat{y}) = Range_{\partial F}(\hat{y})-2= Range_{\partial F}(\hat{x})-2.
\end{equation*}
\end{lemma}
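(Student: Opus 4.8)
The plan is as follows. Write $a:=\min_{n\in\partial F}\hat x_n$ and $b:=\max_{n\in\partial F}\hat x_n$, so that $R:=b-a=Range_{\partial F}(\hat x)$ is an integer with $R\geq 3$. Any admissible $\hat y$ agrees with $\hat x$ on $F^c\supseteq\partial F$, so $Range_{\partial F}(\hat y)=R$ and the second equality in the statement is automatic; it therefore remains to construct $\hat y\in Ht$ which equals $\hat x$ off $F$ and satisfies $Range_F(\hat y)=R-2$.

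First I would reduce to a two-sided clamping statement: it is enough to produce $\hat y\in Ht$ with $\hat y=\hat x$ on $F^c$ and $a+1\leq \hat y_n\leq b-1$ for every $n\in F$. On the one hand this forces $Range_F(\hat y)\leq (b-1)-(a+1)=R-2$. On the other hand, $\partial F$ contains a site of height $b$ which, by definition of $\partial F$, has a neighbour in $F$; that neighbour differs from $b$ by exactly $1$ and is $\leq b-1$, hence has height $b-1$, and symmetrically some site of $F$ has height $a+1$, giving $Range_F(\hat y)\geq R-2$. The same argument applied to $\hat y$ itself shows $\max_F\hat y\geq b-1$ and $\min_F\hat y\leq a+1$ for \emph{any} admissible $\hat y$, so in fact the band $[a+1,b-1]$ is the only place the flattened values can live.

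To build $\hat y$ I would use a ``peak shaving'' procedure rather than a global truncation of $\hat x$ restricted to $F$; the latter is the obvious first attempt but fails, since clamping or reflecting the height function only inside $F$ ruins the height-function condition across the $F$--$\partial F$ interface (a boundary site of height $b$ next to an interior site of height $b+1$ cannot survive such an operation). Instead, for $\hat w\in Ht$ agreeing with $\hat x$ off $F$ set $\Phi(\hat w):=\sum_{n\in F}\big[(\hat w_n-(b-1))^+ +((a+1)-\hat w_n)^+\big]$, a nonnegative integer, finite since $F$ is finite. Starting from $\hat w=\hat x$, while $\Phi(\hat w)>0$ I pick a site $n^*\in F$ attaining $\max_F\hat w=:B$ whenever $B\geq b$ (and if no interior value reaches $b$, then necessarily some interior value is $\leq a$, and I pick a site attaining $\min_F\hat w$), and replace $\hat w_{n^*}$ by $B\mp 2$. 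The point making this legitimate is that a height function never has two adjacent equal values, so such an extremal site is automatically a strict local extremum: all neighbours of $n^*$ — those in $F$ by maximality of $B$, and those in $\partial F\subseteq F^c$ because their height is $\leq b\leq B$ yet differs from $B$ by exactly $1$ — have the value $B-1$ (and $n^*$ has no neighbours outside $F\cup\partial F$). Hence the modified configuration is again in $Ht$, still equals $\hat x$ off $F$, and, using $R\geq 3$ so that $B-2\geq b-2\geq a+1$, creates no new violation while strictly decreasing the contribution of $n^*$ to $\Phi$. Since $\Phi$ is a nonnegative integer that strictly decreases at each step, the procedure terminates at some $\hat y\in Ht$ with $\Phi(\hat y)=0$, i.e. $a+1\leq \hat y_n\leq b-1$ on $F$; the reduction then yields $Range_F(\hat y)=R-2$.

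The step I expect to be the main obstacle — and the one to state carefully — is controlling the $F$--$\partial F$ interface: one must check that the chosen elementary move never forces a change at a boundary site and never creates a gap larger than $1$ along an edge joining $F$ to $\partial F$. The no-plateau property of height functions is exactly what resolves this, since it guarantees that the worst offending interior site is a genuine local extremum whose entire neighbourhood, interior and boundary alike, lies precisely one level below (or above) it.
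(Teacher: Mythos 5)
Your proof is correct and follows essentially the same route as the paper's. Your potential function $\Phi(\hat w)=\sum_{n\in F}\big[(\hat w_n-(b-1))^+ +((a+1)-\hat w_n)^+\big]$ is exactly the quantity $\kappa(\hat x,F)=\sum_{n\in F}\max(\hat x_n-T+1,\,B-\hat x_n+1,\,0)$ used in the paper (the two expressions agree because $T-1>B+1$ when the range exceeds $2$, so at most one term is positive), and the peak-shaving move --- lower the worst interior maximum by $2$, justified by the no-plateau property forcing all its neighbours, interior and boundary alike, to sit exactly one level below --- together with the strict decrease of the integer potential is precisely the paper's induction on $\kappa$.
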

\begin{proof}
Denote $$T:=\max_{ n \in \partial F}\hat{x}_n \mbox{ and } B:=\min_{n \in \partial F}\hat{x}_n .$$

Let
$$ \kappa = \kappa(\hat x,F) := \sum_{n \in F} \max(\hat x_n - T+1, B-\hat x_n+1,0).$$

The number $\kappa$ is the absolute value for the deviations of $\hat{x}|_F$ from the (open) interval $(B,T)$.
We prove the claim by induction on $\kappa$. If $\kappa=0$ then $y=x$ already satisfies the conclusion of this lemma because
$B+1 \le \hat{x}_n \le T-1$ for all $n \in F$ which implies that
$$Range_F(\hat x) = \max_{m \in F}\hat{x}_m - \min_{m \in F }\hat{x}_m \le (\max_{m \in \partial F}\hat{x}_m -1) - (\min_{m \in \partial F }\hat{x}_m +1)$$
$$ = Range_{\partial F}(\hat x) -2.$$

Now suppose $\kappa > 0$, and let $n \in F$ be a coordinate where $\hat{x}$ obtains an extremal value for $F \cup \partial F$. Without loss of generality suppose,
\begin{equation*}
\displaystyle{\hat{x}_n= \max_{m \in F \cup \partial F}\hat{x}_m}.
\end{equation*}
Since all neighbors of $n$ are in $F \cup \partial F$, it follows that $\hat{x}_m=\hat{x}_n -1$ for all $m$ adjacent to $n$.
Therefore we have $\hat{y} \in Ht$ given by
\begin{eqnarray*}
\hat{y}_m:=\begin{cases}\hat{x}_m-2&\text { for } m= n\\
\hat{x}_m& \text{ otherwise}.\end{cases}
\end{eqnarray*}
Since $Range_{\partial F}({\hat x}) >2$,
it follows that $\hat{y}_n$ is neither a minimum nor a maximum for $\hat{y}$ in $F \cup \partial F$. Thus $\kappa(\hat y,F) < \kappa(\hat x,F)$ and so we can apply the induction hypothesis on $\hat y$ and conclude the proof.
\end{proof}

\begin{lemma}\textbf{(``Flat extension of an admissible pattern'')}\label{lem:flat_outside}
Let $\hat{x} \in Ht$ and $N \in \N $. Then there exists $\hat{y} \in Ht$ such that $\hat{y}_n=\hat{x}_n$ for $n \in D_{N+1}$ and
\begin{equation*}
Range_{\partial D_{N+k}}(\hat{y})=Range_{\partial D_N}(\hat{x})-2k,
\end{equation*}
for all $1\leq k \leq \frac{Range_{\partial D_N}(\hat x)}{2}$.
\end{lemma}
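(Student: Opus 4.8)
The plan is to construct $\hat y$ explicitly from the boundary data $\hat x|_{\partial D_N}$ by ``clamped radial interpolation''. Set $T := \max_{n \in \partial D_N} \hat x_n$ and $B := \min_{n \in \partial D_N} \hat x_n$. Since any two sites of $\partial D_N = S_{N+1}$ lie at even $\ell^1$-distance and $\hat x$ changes by $\pm 1$ along every edge, all values of $\hat x$ on $\partial D_N$ share one parity, so $R := \tfrac{1}{2}(T-B) = \tfrac{1}{2} Range_{\partial D_N}(\hat x)$ is a nonnegative integer; recall also that $\hat x$ is $1$-Lipschitz for the $\ell^1$-metric. I first introduce the maximal outward extension of the boundary data,
$$ c(n) := \min_{m \in \partial D_N}\bigl(\hat x_m + \|n-m\|_1\bigr) \qquad (\|n\|_1 \ge N+1), $$
which is the analogue for the infinite exterior of the maximum in Lemma~\ref{lem:heightmaximum}. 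A short argument --- the Lipschitz bound together with the parity principle for height functions (cf.\ Lemma~\ref{lem:homoclinic_heights}), by which the parity of $c(n)$ is determined by $\|n\|_1$ --- shows $|c(n)-c(n')|=1$ on every edge contained in $\{\,\|\cdot\|_1 \ge N+1\,\}$ and that $c$ agrees with $\hat x$ on $\partial D_N$. I then define
$$ \hat y_n := \hat x_n \text{ for } \|n\|_1 \le N+1, \qquad \hat y_n := \min\bigl(T-(\|n\|_1-N-1),\, c(n)\bigr) \text{ for } \|n\|_1 \ge N+1; $$
these agree on $\partial D_N$ since there $c=\hat x\le T$, and $\hat y|_{D_{N+1}} = \hat x|_{D_{N+1}}$ by construction.

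Next I would check $\hat y \in Ht$. On $D_{N+1}$ this is inherited from $\hat x$, so it suffices to verify $|\hat y_n-\hat y_{n'}|=1$ on every edge $\{n,n'\}$ with $\|n'\|_1 = \|n\|_1+1 \ge N+2$. Split into cases according to which of $n,n'$ attains its clamp value $T-(\|\cdot\|_1-N-1)$: if neither does, the identity is the edge relation for $c$; if both do, it is immediate from the formula; in the two mixed cases one uses that at each site $c(\cdot)$ and the clamp $T-(\|\cdot\|_1-N-1)$ have the same parity, so a strict inequality between them forces a gap of at least $2$. This makes one mixed case impossible and pins down both values in the other, again giving a difference of exactly $1$.

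It remains to prove the range identity $Range_{\partial D_{N+k}}(\hat y) = 2R-2k$ for $1 \le k \le R$; here $\partial D_{N+k} = S_{N+1+k}$. The clamp gives $\hat y_n \le T-k$ on $S_{N+1+k}$, and since every term in the definition of $c(n)$ satisfies $\hat x_m + \|n-m\|_1 \ge B + \|n-m\|_1 \ge B+k$ (using $\|n-m\|_1 \ge k$), we get $c(n) \ge B+k$, hence $\hat y_n \ge B+k$ as $B+k \le T-k$ for $k \le R$; thus $Range_{S_{N+1+k}}(\hat y) \le 2R-2k$. For the matching lower bound I exhibit extremizers. Pick $m_T \in \partial D_N$ with $\hat x_{m_T}=T$ and a coordinate direction $e_j$ with $\|m_T+e_j\|_1 = \|m_T\|_1+1$; then $n := m_T + k e_j$ lies on $S_{N+1+k}$, and by the reverse triangle inequality together with $\hat x_m \ge T-\|m-m_T\|_1$ one gets $\hat x_m + \|n-m\|_1 \ge T-k$ for every $m \in \partial D_N$, so $c(n)\ge T-k$ and therefore $\hat y_n = T-k$. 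Symmetrically, moving radially outward from a site where $\hat x=B$ gives a site with $\hat y_n = B+k$. Hence $Range_{\partial D_{N+k}}(\hat y) = (T-k)-(B+k) = 2R-2k = Range_{\partial D_N}(\hat x)-2k$.

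The step I expect to be the main obstacle is verifying that $\hat y$ is genuinely a height function: naive truncation of a height function need not be one, and the mixed clamped/unclamped case is exactly where this could break down --- it works here only because of the parity rigidity of $Ht$ over $\Z^d$ (distinct integers of the same parity differ by at least $2$). The reverse-triangle estimate that locates a site realising the upper clamp is the other point that takes a little care, but it is elementary given the $1$-Lipschitz property of $\hat x$.
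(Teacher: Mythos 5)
Your proof is correct, and it takes a genuinely different route from the paper's. The paper argues by a nested induction on $M:=\tfrac12\, Range_{\partial D_N}(\hat x)$: it repeatedly lowers (or raises) a local extremum of $\hat x$ in the annulus $D_{N+1+M}\setminus D_N$ until the range on $\partial D_{N+1}$ has dropped by $2$, then recurses with $N+1$ in place of $N$, and never writes an explicit formula for $\hat y$. You instead construct $\hat y$ in closed form as the pointwise minimum of the maximal Lipschitz extension $c(n)=\min_{m\in\partial D_N}\bigl(\hat x_m+\|n-m\|_1\bigr)$ and the downward cone $T-(\|n\|_1-N-1)$. The central step, that this pointwise minimum of $c$ and the cone is again a height function, is exactly the parity-based min/max closure principle underlying Lemma~\ref{lem:heightmaximum}: two integer functions which change by $\pm1$ across every edge and share parity at every site have the property that if they ``swap order'' across an edge, the strict inequality would force a gap of at least $2$ on one endpoint while the other only moves by $1$, which is impossible. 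Your radial extremizer argument (moving outward from a max/min of $\hat x$ on $\partial D_N$ and applying the reverse triangle inequality) then pins the range on each $\partial D_{N+k}$ exactly, including the flat boundary at $k=R$. The explicit construction is more transparent and avoids the double (in fact triple) induction in the paper; the paper's iterative-lowering proof, on the other hand, is more elementary and stays closer to the technique already set up for Lemma~\ref{lem:steeptoflat}. One small point worth tidying: when choosing the outward step from $m_T$, take $s e_j$ with $s=\mathrm{sign}((m_T)_j)$ for some $j$ with $(m_T)_j\neq0$ (such $j$ exists since $\|m_T\|_1=N+1\ge1$), so that $\|m_T+ks e_j\|_1=\|m_T\|_1+k$ as required.
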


\begin{proof}

We will prove the following statement by induction on $M \in \mathbb{N}$: For all $N \in \mathbb{N}$ and height functions $\hat{x} \in Ht(D_{N+1+M})$ with
$Range_{\partial D_N} (\hat{x}) = 2M$ there exists a height function $\hat{y} \in Ht(D_{N+1+M})$ such that $\hat{y}_n=\hat{x}_n$ for all $n \in D_{N+1}$ and $1\le k \le M$, $Range_{\partial D_{N+k}}(\hat y) = 2M -2k$. Observe that the height function $\hat{y}$ satisfies in particular $Range_{\partial D_{N+M}}(\hat y) = 0$. Thus, the outermost boundary of $\hat{y}$ is flat and it can be extended to a height function on $\mathbb{Z}^d$, so the lemma will follow immediately once we prove the statement above for all $M \in \mathbb{N}$.

For the base case of the induction, there is nothing to prove.

Assume the result for some $M \in \N$. Let $\hat{x} \in Ht$ be a height function such that $Range_{\partial D_N} (\hat{x}) = 2(M+1)$.
Denote $\tilde{N}:=N+1+(M+1)=N+M+2$. Let $n \in D_{\tilde{N}}\setminus D_{{N+1}}$ be a site where $\hat{x}$ obtains an extremal value for $D_{\tilde{N}}\setminus D_{N}$. If there is no such site then
\begin{eqnarray*}
Range_{\partial D_{N+1}} (\hat{x})&=& \max_{m \in \partial D_{N+1}}\hat{x}_m-\min_{m \in \partial D_{N+1}}\hat{x}_m\\&=& (\max_{m \in \partial D_{N}}\hat{x}_m-1)-(\min_{m \in \partial D_{N}}\hat{x}_m+1)\\&=&2M
\end{eqnarray*} proving the induction step for that case.
Without loss of generality we assume that it is a maximum, that is,
\begin{equation*}
\hat{x}_n= \max_{m\in D_{\tilde{N}}\setminus D_{N}} \hat{x}_m.
\end{equation*}

Then the function $\hat{\tilde{y}}$ given by
\begin{equation*}
\hat{\tilde{y}}_m= \begin{cases}\hat{x}_n-2 &\text{ if } m=n\\ \hat{x}_m&\text{ otherwise }\end{cases}
\end{equation*} is a valid height function on $D_{\tilde{N}}$. Hence we have lowered the height function at the site $n$ of $\hat{x}$. Repeating the steps for sites with extremal height (formally, this is another internal induction, see for example the proof of Lemma \ref{lem:steeptoflat}), a height function $\hat{z}$ can be obtained on $D_{\tilde{N}}$ such that $\hat{z}= \hat{x}$ on $D_{N+1}$ and
\begin{equation*}
Range_{\partial D_{{N+1}}} (\hat{z})=2M.
\end{equation*}
Thus we can apply the induction hypothesis to $\hat{z}$, substituting $N+1$ for $N$ to obtain a height function $\hat{y}$ on $D_{\tilde{N}}$ such that $\hat{y}= \hat {x}$ on $D_{N+1}$ and
\begin{equation*}
Range_{\partial D_{N+k}}(\hat{y})=Range_{\partial D_{N}}(\hat{x})-2k
\end{equation*}
for $1\leq k \leq \frac{Range_{\partial D_{N}}(\hat{x})}{2}.$

This completes the proof of the statement.
\end{proof}
\begin{lemma}\label{lem:notsteeptoanything}\textbf{(``Patching an arbitrary finite configuration inside a non-steep point'')}
Let $r\ne1,2,4$ be a positive integer and $N, k \in \N$. Choose $y \in X_r$ which satisfies $Range_{\partial D_{2N+2r+k+1}}(y)\leq 2k$ and some $x \in X_r$. Then:
\begin{enumerate}
\item{If either $r$ is odd or $x_n -y_n$ is even for all $n \in \mathbb{Z}^d$, then there exists $z \in X_r$ such that
\begin{equation*}
z_n=\begin{cases} x_n &\text{ if } n \in D_N\\y_n &\text { if }n \in D_{2N+2r+k+1}^c\end{cases}
\end{equation*} }
\item{If $r$ is even and $x_n -y_n$ is odd for all $n \in \mathbb{Z}^d$
, then there exists $z \in X_r$ such that

\begin{equation*}
z_n=\begin{cases} x_{n+e_1} &\text{ if } n\in D_N\\y_n &\text { if }n \in D_{2N+2r+k+1}^c\end{cases}
\end{equation*}
}

\end{enumerate}
\end{lemma}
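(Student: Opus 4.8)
The plan is to first reduce the two cases to a single construction by passing to height functions, and then glue three pieces of a height function together: the "inside" piece coming from $x$, the "outside" piece coming from $y$, and an interpolating annulus produced by the two preceding lemmas. First I would lift $x$ and $y$ to height functions $\hat x, \hat y \in Ht$ with $\phi_r(\hat x) = x$, $\phi_r(\hat y) = y$. In Case (1), when $r$ is odd, Lemma \ref{lem:homoclinic_heights}\eqref{item:homo4}-style parity considerations let me adjust $\hat x$ by a constant in $r\Z$ so that $\hat x_n - \hat y_n$ is even for all $n$ (for odd $r$, adding $r$ flips the parity of the difference); when $x_n - y_n$ is already even for all $n$ this is automatic. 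In Case (2), replacing $x$ by $\sigma_{e_1} x$ shifts all the values and, since $y_n - (\sigma_{e_1}x)_n = y_n - x_{n+e_1}$ differs from $y_n - x_n$ by $\pm 1$, turns the odd-difference situation into an even-difference one, which is why the statement patches $x_{n+e_1}$ rather than $x_n$ in that case. So in all cases I may assume we have height-function lifts with $\hat x_n - \hat y_n \in 2\Z$ everywhere, and it suffices to build $\hat z \in Ht$ agreeing with $\hat x$ on $D_N$ and with $\hat y$ on $D_{2N+2r+k+1}^c$; applying $\phi_r$ then gives the desired $z$.

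Next I would control the ranges. By hypothesis $Range_{\partial D_{2N+2r+k+1}}(\hat y) = Range_{\partial D_{2N+2r+k+1}}(y) \le 2k$, and on $D_N$ we have $Range_{D_N}(\hat x) \le 2N$ trivially (height functions change by $1$ across each edge, and $D_N$ has diameter $2N$). The key step is to apply Lemma \ref{lem:steeptoflat} to $\hat x$ with $F$ a ball slightly larger than $D_N$ — taking $F = D_{N+m}$ for a suitable $m$ — repeatedly (or in one invocation with the induction already packaged inside that lemma) to obtain a height function $\hat x'$ agreeing with $\hat x$ on $D_N$ whose range on the boundary of a ball of radius $\approx N + r$ has been brought down to at most $2$ or $3$; the precise bookkeeping is why the radius $2N + 2r$ appears. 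Then I apply Lemma \ref{lem:flat_outside} to flatten $\hat x'$ completely on the boundary of a ball of radius $\le 2N + 2r$, obtaining a height function that is constant on some $\partial D_{L}$ with $L \le 2N+2r$. Symmetrically — or directly, since $\hat y$ already has small range $2k$ on $\partial D_{2N+2r+k+1}$ — I bring $\hat y$ down to a constant value on $\partial D_{2N+2r+1}$ using Lemma \ref{lem:flat_outside} again (flattening $2k$ units of range over the $k$ annuli between radius $2N+2r+1$ and $2N+2r+k+1$).

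The final step is the gluing: I have a height function flat (equal to some constant $c_1$) on $\partial D_L$ with $L \le 2N+2r$, coming from the modified $\hat x$, and a height function flat (equal to some constant $c_2$) on $\partial D_{2N+2r+1}$, coming from the modified $\hat y$. Since $\hat x_n - \hat y_n$ is even everywhere and the flattening operations only subtract $2$'s, the two constants $c_1, c_2$ differ by an even integer, and a flat height function on an annulus $D_{2N+2r+1} \setminus D_L$ can be prescribed to equal $c_1$ on the inner boundary and $c_2$ on the outer boundary provided the annulus is wide enough to absorb $|c_1 - c_2|/2$ steps — which holds because $|c_1 - c_2|$ is controlled by the ranges, hence by $O(N + r + k)$, and the annulus has width $\ge r$ (here one uses that $2N+2r+1 - L \ge 1$ and iterates the one-step height change; a cleaner route is to note the flat-to-flat interpolation on a long thin annulus is elementary). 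Concatenating the three pieces yields $\hat z \in Ht$ with $\hat z|_{D_N} = \hat x|_{D_N}$ and $\hat z|_{D_{2N+2r+k+1}^c} = \hat y|_{D_{2N+2r+k+1}^c}$, and $z := \phi_r(\hat z)$ is the required configuration.

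I expect the main obstacle to be the precise radius bookkeeping: verifying that the chain of applications of Lemmas \ref{lem:steeptoflat} and \ref{lem:flat_outside}, together with the flat annular interpolation, all fits inside the prescribed shell $D_{2N+2r+k+1} \setminus D_N$ without overlap, and in particular checking that the parity/evenness hypothesis is exactly what makes the two flat constants $c_1$ and $c_2$ compatible (an odd discrepancy could never be bridged by a height function, which is precisely the obstruction that forces the $\sigma_{e_1}$-shift in Case (2)). The geometric ideas are all supplied by the earlier lemmas; the work is in the arithmetic of the radii and the parity matching.
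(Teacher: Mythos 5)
Your overall architecture matches the paper's in spirit — pass to height functions, flatten the piece coming from $x$ outward and the piece coming from $y$ inward, and glue across an annulus, using the freedom to shift $\hat{x}$ by $r\mathbb{Z}$ together with the parity hypothesis (which is exactly what forces the $\sigma_{e_1}$-shift in case (2)). But you have Lemmas \ref{lem:steeptoflat} and \ref{lem:flat_outside} in the wrong roles. Lemma \ref{lem:steeptoflat} modifies a height function \emph{inside} $F$ while keeping $F^c$ fixed; applying it to $\hat{x}$ with $F = D_{N+m}$ would destroy $\hat{x}|_{D_N}$, since $D_N \subset F$ is not preserved. It is Lemma \ref{lem:flat_outside} that preserves a central ball and flattens outward, and this is what the paper applies to $x$. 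Symmetrically, for $\hat{y}$ you need to keep $D_{2N+2r+k+1}^c$ fixed and flatten inward — that is Lemma \ref{lem:steeptoflat}'s job, which the paper applies to $y$ repeatedly ($k-1$ times), not Lemma \ref{lem:flat_outside}.

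There is a second, independent gap: iterated use of Lemma \ref{lem:steeptoflat} can only bring the range on $\partial D_{2N+2r+2}$ down to $2$, not to $0$, since its hypothesis $Range_{\partial F} > 2$ blocks the final step; so neither lemma produces the ``constant on a sphere'' you want for $\hat{y}$. The paper works around this by an explicit replacement: once $y$ is reduced to range $2$ on $\partial D_{2N+2r+2}$ (so that only two adjacent values $c,d \in \mathbb{Z}_r$ occur there), it picks a common neighbour $a$ of $c$ and $d$ in $\mathbb{Z}_r$, overwrites the whole interior by the alternating pattern ($a$ on even spheres, $c$ on odd spheres), and then interpolates in $\mathbb{Z}_r$ by building two ``spiral'' configurations that meet at a common constant on an intermediate sphere. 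Your gluing paragraph gestures at this, but the bound $L \le 2N+2r$ leaves an annulus of width as small as $1$, which cannot absorb the discrepancy (of size up to roughly $r$ even after optimizing the $r\mathbb{Z}$-shift) between the two flat constants; the paper's bookkeeping gives $L = 2N$ against an outer sphere at $2N+2r+2$, i.e.\ width $2r+2$, and this is the margin the $\mathbb{Z}_r$-spiral construction actually uses.
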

The idea of this proof lies in the use of Lemmata \ref{lem:steeptoflat} and \ref{lem:flat_outside}. Given any configuration on $D_N$ we can extend it to a configuration on $D_{2N}$ with flat boundary which can be then extended to $y$ a little further away provided the range of $y$ is not too large.
There is a slight technical issue we deal with in the proof below in case $r$ is even. This is essentially due to the fact that for $r$ even we have
$$x_n -x_m \equiv \| n- m \|_1 \mod 2 \mbox{ for all } x \in X_r \mbox{ and } n,m \in \ZD.$$
\begin{proof}
By applying Lemma \ref{lem:steeptoflat} $k-1$ times we conclude that there exists $ y'\in X_r$ such that $y'=y$ on $D_{2N+2r+k+1}^c$ and
\begin{equation*}
Range_{\partial D_{2N+2r+2}}(y')=2.
\end{equation*}

For all $n,m \in \partial D_{2N+2r+2}$, $||n -m||_1$ is even, therefore $\hat y'_{n}$ and $\hat y'_{m}$ have the same parity. Thus, there exist two values $c,d\in \Z_r$ which $y'$ takes on $\partial D_{2N+2r+2}$. Consider $a\in \Z_r$ which is adjacent (for the Cayley graph of $\Z_r$) to both $c$ and $d$. Then the configuration $y^{(1)}$ given by

\begin{equation*}
y^{(1)}_n:=
\begin{cases}
y'_n& \text{ if }\|n\|_1 \geq 2N+2r+3\\
a&\text{ if } \|n\|_1\leq 2N+2r+2 \text{ is even}\\
c&\text{ if } \| n\|_1 \leq 2N+2r+2 \text{ is odd}
\end{cases}
\end{equation*}
is an element of $X_r$. By Lemma \ref{lem:flat_outside} choose $x^{(1)}\in X_r$ such that $x^{(1)}= x$ on $D_N$ and
\begin{equation*}
Range_{\partial D_{2N-1}}(x^{(1)})=0.
\end{equation*}

Equivalently, there exists $b \in \mathbb{Z}_r$ so that $x^{(1)}_n = b$ for all $n \in \partial D_{2N-1}$.

If we are in case $(1)$, either $r$ is even and $b \equiv a \mod 2$ or $r$ is odd. Either way, there is some integer $k \in [0,\ldots r-1]$ such that $a +k \equiv b- k \mod r$. Thus, we can find $y^{(2)},x^{(2)} \in X_r$, so that $y^{(2)}$ agrees with $y^{(1)}$ in $D_{2N+2r}^c$, $x^{(2)}$ agrees with $x^{(1)}$ in $D_{2N}$, and so that both $x^{(2)}$ and $y^{(2)}$ have a common constant value $a+ k = b + (r-k) \mod r$ on $\partial D_{2N+r-k-1}$. Thus, we get the required $z \in X_r$ by setting

\begin{equation*}
{z}_n :=\begin{cases}{x}^{(2)}_n \text { for }n\in D_{2N+r-k-1}\\{y}^{(2)}_n \text{ for }n\in D_{2N+r-k-1}^c\end{cases}
\end{equation*}

To prove case $(2)$ we follow the same procedure, substituting $x$ by $\sigma_{e_1} (x)$.\end{proof}

We can now conclude the proof of Proposition \ref{prop:MRF_X_r_full_or_frozen}:

\begin{proof}
Let $\mu$ be a shift-invariant Markov random field and $v_1,\ldots,v_d$ be given by \eqref{eq:v_j_def}.

Assume that $supp(\mu)$ is not frozen. Then by Lemma \ref{lem:trivialsupport}, $|v_j| <1$ for all $1 \le j \le d$. Again, choose $\epsilon >0$ satisfying $\epsilon < \frac{1}{4}\min\{ 1- |v_j|~:~ 1 \le j \le d\}$.

We need to show that for all $N \in \N$ and configurations $c \in \mathcal{B}_{D_N}(X_r)$, $\mu([c]_{D_N})>0$.

From Lemma \ref{lem:height_slope_limit} it follows that for sufficiently large $k$,
$$\mu( \{ y \in X_r ~:~ Range_{\partial D_k}(y) \le 2(1-\epsilon)k\})>1-\epsilon.$$ Now choose $k > (2N+2r+1)$ large enough so that there exists $y \in supp(\mu)$ with
$$Range_{\partial D_k}(y) \le 2(1-\epsilon)k\le 2(k-(2N+2r+1)).$$
By Lemma \ref{lem:notsteeptoanything}, it follows that there exists $z \in X_r$ with $z_n = y_n $ for $n \in \Z^d \setminus D_k$ and $z_n = c_n$ for $n \in D_N$. Since $\mu$ is an adapted Markov random field it follows that $z\in supp(\mu)$, in particular $\mu([c]_{D_N})>0$.

\end{proof}

\section{Fully supported shift-invariant Gibbs measures on $X_r$ }\label{full support 3cb exists}
Next we demonstrate the existence of a fully-supported shift-invariant Gibbs measure for shift-invariant nearest neighbor interactions on $X_r$. We will obtain such measures by showing that equilibrium measures for certain interactions are non-frozen and thus are fully supported by Proposition \ref{prop:MRF_X_r_full_or_frozen}. To state and prove this result, we need to introduce (measure-theoretic) pressure and equilibrium measures and apply a theorem of Lanford and Ruelle relating equilibrium measures and Gibbs measures. Our presentation is far from comprehensive, and is aimed to bring only definitions necessary for our current results. We refer readers seeking background on pressure and equilibrium measures to the many existing textbooks on the subject, for instance \cite{Rue,walters-book}.

Let $\mu$ be a shift-invariant probability measure on a shift of finite type $X$.
The \emph{measure theoretic entropy} can be defined by
\begin{equation}
h_\mu :=\lim_{N \rightarrow \infty}\frac{1}{|D_N|}H^{D_N}_{\mu},
\end{equation}

where $D_N$ was defined in \eqref{eq:D_N_def} and
\begin{equation}
H^{D_N}_{\mu}:=\sum_{a\in \B_{D_N}(X)}-\mu([a]_{D_N})\log{\mu([a]_{D_N})},
\end{equation}
with the understanding that $0\log 0=0$.


Given a continuous function $f:X \to \mathbb{R}$, the \emph{measure-theoretic pressure} of $f$ with respect to $\mu$ is given by
$$P_\mu(f) := \int f d\mu + h_\mu.$$
A shift-invariant probability measure $\mu$ is an \emph{equilibrium state} for $f$ if the maximum of $\nu \mapsto P_\nu(f)$ over all shift-invariant probability measures is attained at $\mu$. The existence of an equilibrium state for any continuous $f$ follows from upper-semi-continuity of the function $\nu \mapsto P_\nu(f)$ with respect to the weak-$*$ topology.

Let $\phi$ be a nearest neighbor interaction on X. As in \cite{Rue} define a function $f_\phi:X\longrightarrow \R$ by
\begin{equation}\label{eq:f_phi_def}
f_\phi(x) := \sum_{A\text{ finite }~:~0\in A \subset \Z^d}\frac{1}{|A|}\phi(x|_{A}).
\end{equation}

Note that there are only a finite number of non-zero summands in \eqref{eq:f_phi_def} above, because  $\phi$ is a nearest neighbor interaction

The following is a restricted case of a classical theorem by Lanford and Ruelle:
\begin{thm*}\textbf{(Lanford-Ruelle Theorem \cite{lanfruell, Rue})} Let $X$ be a $\Z^d$-shift of finite type and $\phi$ a shift-invariant nearest neighbor interaction. Then any equilibrium state $\mu$ for $f_\phi$ is a Gibbs state for the given interaction $\phi$. \label{equiGibbs}
\end{thm*}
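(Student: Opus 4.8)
The plan is to run the classical variational argument of Lanford and Ruelle, specialized to a $\ZD$-shift of finite type $X$. For a finite $V\subset\ZD$ write $U_V(x):=\sum_{B\subset V}\phi(x|_B)$ (a finite sum, since $\phi$ has finite range), $Z_V:=\sum_{y\in\mathcal{B}_V(X)}e^{U_V(y)}$, and, for a Borel probability measure $\lambda$ on $X$, $\Psi_V(\lambda):=H^V_\lambda+\int U_V\,d\lambda$, where $H^V_\lambda$ is the Shannon entropy of the $V$-marginal of $\lambda$ and $\mathcal{F}_W$ denotes the $\sigma$-algebra generated by the coordinates in $W$. I would first record three standard facts, all available in \cite{Rue,walters-book}: (a) for every $\lambda$, $\log Z_V-\Psi_V(\lambda)=D\big(\lambda|_V\,\|\,Z_V^{-1}e^{U_V}\big)\ge 0$, with equality if and only if the $V$-marginal of $\lambda$ equals the free-boundary finite-volume Gibbs distribution $Z_V^{-1}e^{U_V}$; (b) the topological pressure $P(\phi):=\lim_{N\to\infty}|D_N|^{-1}\log Z_{D_N}$ exists; (c) for shift-invariant $\mu$ one has $|D_N|^{-1}\Psi_{D_N}(\mu)\to h_\mu+\int f_\phi\,d\mu=P_\mu(f_\phi)$ (shift-invariance and finite range of $\phi$ absorb the $O(|\partial D_N|)$ corrections), and the variational principle gives $\sup_\nu P_\nu(f_\phi)=P(\phi)$, the supremum over shift-invariant probability measures $\nu$ on $X$. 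In particular, an equilibrium state $\mu$ for $f_\phi$ satisfies $P_\mu(f_\phi)=P(\phi)$.

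Next I would reduce ``$\mu$ is a Gibbs state for $\phi$'' to a statement about cubes. For finite $A$ and a boundary value $\eta$, let $\gamma_A(\cdot\mid\eta)$ be the finite-volume Gibbs distribution determined by $\phi$, i.e. $\gamma_A(a\mid\eta)\propto\prod_{C\subset A\cup\partial A}e^{\phi((a\vee\eta)|_C)}$; being a Gibbs state for $\phi$ in the sense of Subsection~\ref{Subsection: Gibbs States} is equivalent to the DLR equations $\mu(x|_A\in\cdot\mid\mathcal{F}_{A^c})=\gamma_A(\cdot\mid x|_{\partial A})$ $\mu$-a.s. for all finite $A$, and --- by consistency of the Gibbs specification and a routine martingale argument --- it suffices to check the a priori weaker condition $\mu_{D_m}(\cdot\mid x|_{\partial D_m})=\gamma_{D_m}(\cdot\mid x|_{\partial D_m})$ for every $m\in\N$, where $\mu_{D_m}(\cdot\mid\eta)$ denotes the conditional distribution of $x|_{D_m}$ given $\{x|_{\partial D_m}=\eta\}$. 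Suppose this fails. Then for some $m$ the two conditional distributions differ on a set of positive $\mu$-measure, so, by strict positivity of relative entropy,
\begin{equation*}
\Gamma\ :=\ \int D\big(\mu_{D_m}(\cdot\mid x|_{\partial D_m})\ \big\|\ \gamma_{D_m}(\cdot\mid x|_{\partial D_m})\big)\,d\mu(x)\ >\ 0,
\end{equation*}
and by shift-invariance of $\mu$ the same number $\Gamma$ results with $D_m$ replaced by any translate $D_m+t$.

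Now fix a large $N$ and choose translates $D_m+t_1,\dots,D_m+t_k$ that are pairwise sufficiently separated --- at $\ell^1$-distance exceeding the range of $\phi$ and the diameters of a finite family of forbidden patterns defining $X$ --- with each $(D_m+t_i)\cup\partial(D_m+t_i)$ contained in $D_N$ and $k\ge c(d,m)\,|D_N|$ for some constant $c(d,m)>0$ (valid for all large $N$). Put $G:=\bigcup_i(D_m+t_i)$. By the separation, both the constraints of $X$ and the Gibbs interaction decouple across the blocks, so (a standard property of the Gibbs specification) there is a Borel probability measure $\mu'_N$ on $X$ that agrees with $\mu$ on $\mathcal{F}_{G^c}$ and, conditionally on $\mathcal{F}_{G^c}$, distributes $x|_G$ as $\bigotimes_i\gamma_{D_m+t_i}(\cdot\mid x|_{\partial(D_m+t_i)})$. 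A chain-rule computation for $\Psi_{D_N}$ --- bounding the entropy of $x|_G$ given $\mathcal{F}_{D_N\setminus G}$ from below, by sub-additivity, by the sum of the per-block boundary-conditional entropies, and using that each clique meeting $G$ lies inside some $(D_m+t_i)\cup\partial(D_m+t_i)\subset D_N$ --- yields
\begin{equation*}
\Psi_{D_N}(\mu'_N)\ \ge\ \Psi_{D_N}(\mu)+k\,\Gamma .
\end{equation*}
Dividing by $|D_N|$ and applying facts (a), (b), (c) as $N\to\infty$ gives
\begin{equation*}
P(\phi)\ =\ \lim_N |D_N|^{-1}\log Z_{D_N}\ \ge\ \limsup_N |D_N|^{-1}\Psi_{D_N}(\mu'_N)\ \ge\ P_\mu(f_\phi)+c(d,m)\,\Gamma\ =\ P(\phi)+c(d,m)\,\Gamma ,
\end{equation*}
which contradicts $\Gamma>0$. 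Hence $\mu$ satisfies the DLR equations, i.e. $\mu$ is a Gibbs state for $\phi$.

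I expect the main obstacle to be the inequality $\Psi_{D_N}(\mu'_N)\ge\Psi_{D_N}(\mu)+k\,\Gamma$: one has to carry out the chain-rule bookkeeping carefully, checking that the entropy of $x|_G$ given $\mathcal{F}_{D_N\setminus G}$ is at least the sum over blocks of the per-block boundary-conditional entropies (sub-additivity, together with the fact that each block's resampling law depends on the outside only through $x|_{\partial(D_m+t_i)}$), that $U_{D_N}$ splits into a part unchanged by the resampling plus, for each block, exactly the energy of the cliques meeting that block, and that the resulting per-block gain equals $\int D\big(\mu_{D_m+t_i}(\cdot\mid x|_{\partial(D_m+t_i)})\,\|\,\gamma_{D_m+t_i}(\cdot\mid x|_{\partial(D_m+t_i)})\big)\,d\mu=\Gamma$ by shift-invariance. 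The two classical inputs that must be invoked precisely are the strictness in (a) --- the finite-volume Gibbs distribution is the \emph{unique} maximizer of entropy-plus-energy, by strict positivity of relative entropy --- and the equivalence between the DLR equations and the notion of Gibbs state used here (with the reduction to cubes via consistency of the specification).
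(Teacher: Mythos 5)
The paper does not prove this statement; it records it as a classical theorem and cites Lanford--Ruelle and Ruelle's book, so there is no in-paper proof to compare against. Your proposal is a faithful reconstruction of the standard variational argument found in those sources: translate the equilibrium condition into a finite-volume entropy-plus-energy statement, suppose the DLR/boundary-conditional equations fail on some cube so that the averaged relative entropy $\Gamma>0$, resample $\mu$ on well-separated translates of that cube to gain $k\Gamma$ in $\Psi_{D_N}$, and contradict the variational principle. The chain-rule bookkeeping (subadditivity plus ``conditioning reduces entropy'' on the entropy side, per-block splitting of the energy by separation, per-block gain $= D(\mu_{\mathrm{block}}\|\gamma_{\mathrm{block}})$) is correct.

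One point deserves more care and is currently asserted rather than argued: the claim that the resampled measure $\mu'_N$ is supported on $X$. Separation of the blocks ensures the SFT constraints ``decouple'' in the sense that a forbidden pattern can overlap at most one block, so resampling blockwise with locally admissible patterns yields a locally admissible configuration. For a nearest-neighbour SFT ``locally admissible'' equals ``globally admissible,'' so $\mu'_N$ is indeed a measure on $X$ and the bound $\Psi_{D_N}(\mu'_N)\le\log Z_{D_N}$ applies as stated; this covers all of the paper's applications. For a general SFT (as the theorem is stated), the conditional $\gamma_{D_m}(\cdot\mid x|_{\partial D_m})$ conditioned only on the width-one boundary can charge patterns $a$ with $a\vee x|_{D_m^c}\notin X$, so $\mu'_N$ may leave $X$ and fact~(a) does not directly apply with the globally-admissible partition function $Z_{D_N}$. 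The standard fixes are either to condition the specification on the full exterior (the form in which Ruelle's book proves the theorem), or to run the whole argument with the \emph{locally admissible} partition function $Z^{\mathrm{loc}}_{D_N}$, together with the separate (but standard) fact that $\lim_N |D_N|^{-1}\log Z^{\mathrm{loc}}_{D_N}=P(\phi)$ for shifts of finite type. You should flag and close this gap, or explicitly restrict to nearest-neighbour SFTs, which is the regime actually used in the paper.
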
The \emph{topological entropy} of a $\Z^d$-subshift $X$ is given by
$$h(X) := \lim_{k \to\infty}\frac{1}{|D_k|}\log|\mathcal{B}_{D_k}(X)|.$$

We recall the well known \emph{variational principle} for topological entropy of $\Z^d$-actions, which (in particular) asserts that $h(X)=\sup_{\nu}h_\nu$ whenever $X$ is a $\Z^d$-shift space and the supremum is over all probability measures on $X$.

\begin{lemma}\label{lem:forzen_zero_entropy}
Let $\mu$ be a shift-invariant, frozen Markov random field on $\mathcal{A}^{\ZD}$, then $h_\mu =0$.
\end{lemma}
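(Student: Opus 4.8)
The plan is to show that a frozen shift-invariant Markov random field $\mu$ has zero entropy by exploiting the rigidity forced by frozenness. Recall that $\mu$ is frozen means $supp(\mu)$ consists of frozen points, i.e. points $x$ whose homoclinic class is a singleton. I will argue that the frozen points of $X_r$ (or more generally of any such subshift) are highly constrained: the symbol at any site is determined by the symbols on a bounded neighborhood together with the global ``direction'' data. Concretely, for $X_r$, by Proposition \ref{prop:X_r_pivot_property} a point $x$ is frozen iff every site has two neighbors carrying distinct symbols. One then shows that for a frozen point the value $\DHt(x,n)$ is eventually forced to be linear: once one knows $x$ on a bounded patch, the frozen constraint propagates the ``gradient'' $\DHt$ deterministically in each coordinate direction. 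This makes the orbit closure of a frozen point have polynomially-bounded (in fact, subexponential) pattern complexity, hence zero topological entropy on $supp(\mu)$, and then $h_\mu = 0$ follows from the variational principle $h(X) = \sup_\nu h_\nu$ recalled just above.

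First I would make precise the combinatorial rigidity of frozen points. For a frozen $x \in X_r$, fix $j \in \{1,\dots,d\}$ and consider the sequence $(\DHt(x, ke_j) - \DHt(x,(k-1)e_j))_{k\in\Z}$, each term being $\pm1$. The frozen condition — every site has two neighbors with distinct colors — should force this $\pm1$ sequence to be eventually periodic with a uniformly bounded period, and in fact I expect it to be forced to be constant or to alternate, once the value of $x$ on a window of bounded size (depending only on $r$ and $d$) is known. The cleanest route: show that the number of patterns of $supp(\mu)$ on $D_N$ is at most polynomial in $N$, by showing that a frozen pattern on $D_{N}$ is determined by its restriction to $D_{C}$ for some constant $C = C(r,d)$ together with at most $O(N)$ extra bits, or even better, that $|\mathcal{B}_{D_N}(supp(\mu))|$ grows subexponentially. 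Then $h(supp(\mu)) = \lim \frac{1}{|D_N|}\log|\mathcal{B}_{D_N}(supp(\mu))| = 0$.

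Then I would finish via soft arguments: since $\mu$ is shift-invariant with $supp(\mu)$ a frozen (hence zero-entropy) subshift, the variational principle gives $h_\mu \le h(supp(\mu)) = 0$, and since entropy is nonnegative, $h_\mu = 0$. I should be slightly careful that the statement is for general $\mathcal{A}^{\ZD}$ and not just $X_r$; the notion of frozen for a general shift-invariant Markov random field is that $supp(\mu)$ has trivial homoclinic relation, i.e. $supp(\mu)$ is a frozen subshift. The key general fact I need is: \emph{a shift-invariant subshift with trivial homoclinic relation has zero topological entropy}. This can be proved directly: if $X$ has trivial homoclinic relation and positive entropy, a standard counting/pigeonhole argument produces two distinct points agreeing outside a finite set — roughly, positive entropy forces exponentially many $D_N$-patterns sharing a common $\partial D_N$-frame (since the number of frames is only $e^{O(N^{d-1})}$), and any two such patterns glue (using the topological Markov field property, which holds for $supp(\mu)$ by Lemma 2.0.1 of \cite{Chandgotia}) to distinct homoclinic points — contradiction.

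The main obstacle, and the step I would spend the most care on, is this last gluing/counting argument: I need to know that $supp(\mu)$ is a topological Markov field so that two $D_N$-patterns with the same boundary $\partial D_N$ can be exchanged to produce genuinely distinct homoclinic points in $supp(\mu)$. That $supp(\mu)$ is a topological Markov field is exactly the content of the fact cited earlier (the support of a Markov random field is a topological Markov field). Granting that, positive entropy of $supp(\mu)$ — i.e. $|\mathcal{B}_{D_N}(supp(\mu))| \ge e^{cN^d}$ for some $c>0$ — together with $|\mathcal{B}_{\partial D_N}(supp(\mu))| \le |\mathcal{A}|^{|\partial D_N|} = e^{O(N^{d-1})}$ forces, for large $N$, two distinct patterns $a \ne a'$ in $\mathcal{B}_{D_N}(supp(\mu))$ with the same restriction to $\partial D_N$; extending a common $x \in supp(\mu)$ compatible with this boundary, the topological Markov field property yields $y, y' \in supp(\mu)$ with $y|_{D_N} = a$, $y'|_{D_N}=a'$, $y|_{D_N^c}=y'|_{D_N^c}$, so $(y,y')\in\Delta_{supp(\mu)}$ and $y \ne y'$, contradicting frozenness. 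Hence $h(supp(\mu))=0$ and therefore $h_\mu=0$.
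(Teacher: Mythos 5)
Your core argument (the final two paragraphs) is correct and is essentially the paper's proof in contrapositive form: both hinge on the fact that a topological Markov field consisting of frozen points satisfies $|\mathcal{B}_F(supp(\mu))| \le |\mathcal{B}_{\partial F}(supp(\mu))| \le |\mathcal{A}|^{|\partial F|}$, which for $F = D_N$ grows only as $e^{O(N^{d-1})}$, so $h(supp(\mu))=0$ and the variational principle gives $h_\mu = 0$. One small correction to the pigeonhole step: $\partial D_N$ is the \emph{outer} boundary (disjoint from $D_N$), so you should look for two patterns on $D_N \cup \partial D_N$ agreeing on $\partial D_N$ --- they then necessarily differ inside $D_N$, and the topological Markov field property of $supp(\mu)$ produces the nontrivial homoclinic pair as you describe; the bound $|\mathcal{B}_{D_N \cup \partial D_N}(supp(\mu))| \ge |\mathcal{B}_{D_N}(supp(\mu))| \ge e^{cN^d}$ keeps the counting intact. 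The first two paragraphs on $X_r$-specific gradients and eventual linearity of $\DHt$ are a detour that, as you yourself note, does not apply to the general $\mathcal{A}^{\ZD}$ setting of the lemma; they should be dropped in favor of the general counting argument you already give.
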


\begin{proof}
Consider $X_\mu := supp(\mu)$. This is a shift-invariant topological Markov field, consisting of frozen points.
Thus for all finite $F \subset \ZD$, $|\mathcal{B}_F(X_\mu)| \le |\mathcal{B}_{\partial F} (X_\mu)|$ . In particular,
$$ \log |\mathcal{B}_{D_k}(X_\mu)| \le \log |\mathcal{B}_{\partial D_k}(X_\mu)| \le C k^{d-1}.$$
It follows that $h(X_\mu)= 0$, so by the variational principle $h_\mu =0$.
\end{proof}

\begin{lemma}\label{lem:equilibrium_non_frozen}
Let $M$ be a Gibbs cocycle on $X_r$ with a shift-invariant nearest neighbor interaction. Then there exists a shift-invariant nearest neighbor interaction $\phi$ such that $M=M_\phi$ and any equilibrium measure for $f_\phi$ is non-frozen.
\end{lemma}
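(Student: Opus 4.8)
We obtain the lemma by combining the Lanford--Ruelle Theorem with Proposition~\ref{prop:MRF_X_r_full_or_frozen}, together with a comparison of pressures. Start from any shift-invariant nearest neighbor interaction $\phi$ with $M=M_\phi$ (available since $M\in\mathcal{G}^\sigma_{X_r}$), keeping in mind that we may still replace $\phi$ by $\phi+\psi$ for any ``null'' interaction $\psi$, that is one with $M_\psi=0$; besides coboundaries and constants, such $\psi$ include the ``tilt'' interactions with $\psi_j$ equal to the indicator of an up-edge in a coordinate direction $j$, and these do change $\nu\mapsto\int f_\psi\,d\nu$. Let $\mu$ be an equilibrium measure for $f_\phi$, which exists by upper semicontinuity of the pressure. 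By the Lanford--Ruelle Theorem $\mu$ is a Gibbs state for $\phi$, hence adapted to $X_r$, hence by Proposition~\ref{prop:MRF_X_r_full_or_frozen} either $\mathit{supp}(\mu)=X_r$ or $\mu$ is frozen; since $X_r$ has non-frozen points, in the first case $\mu$ is non-frozen and we are done. Thus it suffices to choose $\phi$ so that no equilibrium measure for $f_\phi$ is frozen. If $\mu$ were a frozen equilibrium measure then $h_\mu=0$ by Lemma~\ref{lem:forzen_zero_entropy}, so that the topological pressure satisfies $P(X_r,f_\phi)=\int f_\phi\,d\mu=\sup\{\int f_\phi\,d\nu:\ \nu\text{ shift-invariant and frozen}\}$, and it is exactly this equality that we must contradict.

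The geometric input is that every frozen $x\in X_r$ is maximally steep: $Range_{D_N}(x)=2N$ for all $N$. Indeed, if $\hat x$ is a lift of a frozen $x$, then the maximum (respectively minimum) of $\hat x$ over $D_N$ cannot be attained at an interior site $n_0\in D_{N-1}$, for otherwise all neighbours of $n_0$ would be at height $\hat x_{n_0}-1$ (resp.\ $\hat x_{n_0}+1$), making $n_0$ a local extremum of $\hat x$ and hence a pivotable site, contradicting that $x$ is frozen; consequently $Range_{D_{N-1}}(x)\le Range_{D_N}(x)-2$, and iterating from $Range_{D_1}(x)=2$ (a frozen site has a neighbour above and one below) and using the trivial bound $Range_{D_N}\le 2N$ forces the claimed equality. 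In particular every near-maximizer of $\nu\mapsto\int f_\phi\,d\nu$ among frozen measures is carried by maximally steep configurations, in contrast to the typical points of any non-frozen shift-invariant measure, which by Lemma~\ref{lem:height_slope_limit} and Lemma~\ref{lem:trivialsupport} have strictly sub-maximal slope. The plan is now to ``relax'' a near-optimal frozen configuration into a positive-entropy family of non-frozen ones: fix $\epsilon>0$ and large $N$, pick a frozen $x$ with $\frac1{|D_N|}\sum_{n\in D_N}f_\phi(\sigma_n x)\ge P(X_r,f_\phi)-\epsilon$, use Lemma~\ref{lem:flat_outside} (and Lemma~\ref{lem:steeptoflat}, Lemma~\ref{lem:heightmaximum}) to modify $\hat x$ inside a ball $D_{cN}$ so that the new height function has flat boundary on a shell surrounding a smaller ball, then invoke Lemma~\ref{lem:notsteeptoanything} to patch an arbitrary admissible $X_r$-pattern into that interior ball. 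Of the $e^{\Theta(N^{d})}$ patterns that can be inserted, all but a negligible fraction create a local extremum of the height function and hence a non-frozen point; averaging the resulting point masses and letting $N\to\infty$ yields a shift-invariant non-frozen measure $\nu$.

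The only remaining point — and the main obstacle — is the energy bookkeeping for $\int f_\phi\,d\nu$. Because $M=M_\phi$ is a Gibbs cocycle for a shift-invariant nearest neighbor interaction, all the energy differences occurring in the surgery above are bounded by $c_2\,(cN)^{d}$ for a constant $c_2$ depending only on $\phi$ and $d$ — this is precisely the estimate $|M_0(x,y)|\le c_2 N^{d}$ isolated in the proof of Lemma~\ref{lem:bigmeasure} — while the entropy contributed by the free interior ball is $\Theta(N^{d})$ with a positive coefficient; here one must choose the representative $\phi$ of $M$ (spreading the cocycle symmetrically over the edges and fixing the single-site term, and possibly adding tilt interactions) so that relaxing a maximally steep configuration costs only $o(N^{d})$ in energy, whence $h_\nu+\int f_\phi\,d\nu>P(X_r,f_\phi)$, contradicting the assumption that $\mu$ is frozen. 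This is exactly where the hypothesis $M\in\mathcal{G}^\sigma_{X_r}$, i.e.\ $\sum_i\alpha_i=0$ in the notation of Proposition~\ref{prop:Gibbs_cocycles_X_r}, enters: a nonzero overall height bias would, by Lemma~\ref{lem:bigmeasure}, make the flattening cost proportional to the volume with a definite sign, and the argument would break down — consistently with Proposition~\ref{prop:non_gibbs_frozen}, which shows that in that case the equilibrium measures really are frozen.
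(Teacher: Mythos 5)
Your overall strategy is correct in its skeleton: reduce to showing that for a suitable representative $\phi$ of $M$, no frozen measure can be an equilibrium state, using $h_\mu=0$ for frozen $\mu$ (Lemma~\ref{lem:forzen_zero_entropy}) and then comparing $\int f_\phi\,d\mu$ against the topological pressure. Your geometric observation that frozen points of $X_r$ are exactly the maximally steep ones ($Range_{D_N}=2N$) is also correct, and it is in essence the same fact the paper exploits. But from there your route diverges from the paper's, and the divergence contains a genuine gap.

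The gap is in the energy bookkeeping. You freely acknowledge that the surgery changes the energy by $O(N^d)$ (via the $|M_0(x,y)|\le c_2 N^d$ bound from Lemma~\ref{lem:bigmeasure}), and then assert that by choosing the representative $\phi$ cleverly one can push this cost down to $o(N^d)$. That assertion is not justified, and it is not true in general: the energy change of a surgery on $D_{cN}$ is genuinely of order $N^d$ for generic $\phi$, and symmetrizing the interaction or adding ``tilt'' terms (which change $\int f_\psi\,d\nu$ by a quantity of the same order) does not make the coefficient vanish. Since the entropy gain is also of order $N^d$, your argument degenerates into a race between two quantities of the same order with no control on the constants. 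You also invoke an averaging-of-point-masses limit to produce a shift-invariant non-frozen $\nu$, which needs its own justification (tightness and identification of the limit), but the energy estimate is the essential missing ingredient.

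The paper's own proof avoids both problems by a direct computation that never performs any surgery. It constructs the symmetric representative $\tilde\phi$ with all single-site weights zero and $\tilde\phi([i,i+1]_j)=\tilde\phi([i+1,i]_j)=a_i$ independent of $j$, so that $\int f_\phi\,d\nu=\sum_j\sum_i a_i(\nu([i,i+1]_j)+\nu([i+1,i]_j))\le d\cdot\max_i a_i$ for every shift-invariant $\nu$, with equality iff $\nu$ gives zero mass to all edges whose weight is not maximal. The crucial consequence of maximal steepness you identified (but did not use this way) is that a frozen configuration, being a lift of a linear height profile in some direction, necessarily cycles through all residues modulo $r$; hence any frozen $\mu$ assigns positive probability to \emph{every} edge type $(i,i+1)$ or $(i+1,i)$ along some coordinate direction. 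So if the $a_i$ are not all equal, every frozen measure has strictly suboptimal energy, and with $h_\mu=0$ this gives $P_\mu(f_\phi)<\sup_\nu P_\nu(f_\phi)$. If the $a_i$ are all equal, $f_\phi$ is constant, the equilibrium state is a measure of maximal entropy, and $h(X_r)>0$ finishes it. Nothing needs to be perturbed and no $O(N^d)$ versus $\Theta(N^d)$ race arises. To repair your write-up you would either have to supply a quantitative bound showing the energy deficit of the patched configurations is beaten by the entropy gain — which is not clearly possible — or, better, replace the whole relaxation step by the direct energy comparison as in the paper.
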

\begin{proof}

Let $(x^{(i)},y^{(i)}) \in \Delta_{X_r}$ be as in the proof of Proposition \ref{prop:X_r_markov_cocycles}. If $M \in \mathcal{G}_{X_r}$ then there exists a shift-invariant nearest neighbor interaction $\phi$ so that
\begin{eqnarray*}
M(x^{(i)},y^{(i)})&=&\phi([i+2]_0)-\phi([i]_0) \\
&+& \sum_{j=1}^{d} \left(\phi([i+2,i+1]_j)-\phi([i+1,i]_j) + \phi([i+1,i+2]_j) - \phi([i,i+1]_j)\right).\end{eqnarray*}

Consider the nearest neighbour interaction $\tilde \phi$ given by
\begin{equation*}
\tilde \phi \left( [i+1,i]_j\right) := \tilde \phi \left( [i,i+1]_j\right) :=
\frac{1}{2d} \left[
\sum_{k=1}^d \left(\phi([i+1,i]_k)+\phi([i,i+1]_k) \right) + \phi([i]_0)+ \phi([i+1]_0)
\right]
\end{equation*}
for all $i \in \Z_r$ and $j \in \{1,\ldots,d\}$, and $\tilde \phi([i]_0)=0$ for all $i \in \Z_r$.

It follows 
that $M(x^{(i)},y^{(i)}) = M_{\tilde \phi}(x^{(i)},y^{(i)})$ for all $i \in \Z_r$ and so $M = M_{\tilde \phi}$.

Thus we can assume without loss of generality that $\phi = \tilde \phi$ satisfies
$$\phi([i,i+1]_j)=\phi([i+1,i]_j)=a_i \mbox{ for all } i \in \Z_r \mbox{ and }j \in\{1, 2, \ldots, d\}$$
and $\phi([i]_0)=0$ for all $i \in \Z_r$.

By \eqref{eq:f_phi_def}:
$$ \int f_\phi(x) d\mu(x) = \int \frac{1}{2} \sum_{j=1}^d\left( \phi([x_{-e_j},x_{0}]_j) + \phi([x_0,x_{e_j}]_j)\right) \mu(x)$$

Thus:
\begin{eqnarray*}
\int f_\phi(x) d\mu(x) &=&\sum_{j=1}^d\sum_{i=0}^{r-1} \phi([i,i+1]_j)\mu([i,i+1]_j)+\phi([i+1,i]_j)\mu([i+1,i]_j)\\
&=& \sum_{j=1}^{d}\sum_{i=0}^{r-1} a_i(\mu([i,i+1]_j)+\mu([i+1,i]_j)).
\end{eqnarray*}


Let $a = \max_{0 \le i \le r-1} a_i$ attained by $a_{i_0}$. It follows that for any shift-invariant probability measure
$$\int f_\phi(x) d\mu(x) \le d\cdot a$$
with equality holding iff $\mu ([i,i+1]_j)=\mu ([i+1,i]_j)=0$ for all $a_i <a$ and $j=1,\ldots,d$.

For a frozen measure $\mu $ it follows that for some $j \in \{1, 2, \ldots, d\}$,
$\mu([i, i+1]_j)>0$ for all $i \in \{0, 1,\ldots, r-1\}$ or $\mu([i+1, i]_j)>0$ for all $i \in \{0, 1,\ldots, r-1\}$. Thus if $a_i <a$ for \emph{some } $0 \le i \le r-1$, it follows that for any frozen measure $\mu$,
\begin{equation}\label{eq:phi_int_not_max}
\int f_\phi(x) d\mu(x) < \sup_{\nu} \int f_\phi(x) d\nu(x).
\end{equation}
where the supremum is attained by the measure supported on the orbit of the periodic point $x \in X_r$ given by
$$x_n := \left\{
\begin{array}{ll} i_0 & \|n\|_1 \mbox{ odd}\\
i_0+1 & \|n\|_1 \mbox{ even }
\end{array}\right.
$$

By Lemma \ref{lem:forzen_zero_entropy}, if $\mu$ is frozen then $h_\mu =0$.

Thus in this case by  \eqref{eq:phi_int_not_max} for any frozen probability measure $\mu$
$$P_\mu(f_\phi) = \int f_\phi(x) d\mu(x) < \sup_{\nu} \int f_\phi(x) d\nu(x) \le \sup_{\nu} P_{\nu}(f_\phi)$$
and in particular any frozen measure $\mu$ can not be an equilibrium measure for $f_\phi$.

The remaining case is when $a_i=a$ for all $i$, in which case $f_\phi(x)=d\cdot a$ is constant. Thus, by the variational principle $\sup_{\nu} P_{\nu}(f_\phi)= d\cdot a + \sup_{\nu} h_\nu = d\cdot a+ h(X_r)$. Since $h(X_r) >0$, it follows that the strict inequality $P_{\mu}(f_\phi) < \sup_{\nu}P_{\nu}(f_\phi)$ holds also in this case for any frozen measure $\mu$. Thus we have the result that for a given Gibbs cocycle with a shift-invariant nearest neighbor interaction there exists an interaction for that cocycle such that the corresponding equilibrium state is not frozen.
\end{proof}

\begin{corollary}
For all shift-invariant Gibbs cocycles $M$ on $X_r$ there exists a shift-invariant nearest neighbor interaction $\phi$ on $X_r$ with $M = M_\phi$ and a corresponding shift-invariant Gibbs state $\nu$ with $supp(\nu)=X_r$.
\end{corollary}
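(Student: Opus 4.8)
Let $M$ be a shift-invariant Gibbs cocycle on $X_r$. By Lemma \ref{lem:equilibrium_non_frozen}, we may choose a shift-invariant nearest neighbor interaction $\phi$ on $X_r$ with $M=M_\phi$ and such that every equilibrium measure for $f_\phi$ is non-frozen. An equilibrium measure $\nu$ for $f_\phi$ exists because $\nu \mapsto P_\nu(f_\phi)$ is upper-semi-continuous with respect to the weak-$*$ topology, and by definition such a $\nu$ is shift-invariant.

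By the Lanford-Ruelle Theorem, $\nu$ is a Gibbs state for the interaction $\phi$. In particular $\nu$ is a Markov random field with $supp(\nu) \subset X_r$, and since (as recorded in the remark following the definition of the Gibbs cocycle) a Gibbs state for a nearest neighbor interaction has $\Delta_X$-Radon-Nikodym cocycle $e^{M_\phi}$, the measure $\nu$ is non-singular with respect to $\Delta_{X_r}$, i.e.\ it is adapted to $X_r$.

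Now $\nu$ is a shift-invariant Markov random field adapted to $X_r$ which, by the choice of $\phi$, is not frozen. By Proposition \ref{prop:MRF_X_r_full_or_frozen} it follows that $supp(\nu)=X_r$. Thus $\phi$ is a shift-invariant nearest neighbor interaction on $X_r$ with $M=M_\phi$, and $\nu$ is a corresponding shift-invariant Gibbs state with full support $supp(\nu)=X_r$.
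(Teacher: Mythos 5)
Your proof is correct and follows essentially the same route as the paper: invoke Lemma~\ref{lem:equilibrium_non_frozen} to obtain $\phi$ with $M=M_\phi$ and a non-frozen equilibrium measure, apply the Lanford--Ruelle Theorem to get a Gibbs state, and then use Proposition~\ref{prop:MRF_X_r_full_or_frozen} to conclude full support. You merely spell out two steps the paper leaves implicit (existence of an equilibrium measure via upper-semi-continuity, and adaptedness of the resulting Gibbs state), which is fine.
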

\begin{proof}
By Lemma \ref{lem:equilibrium_non_frozen}, there exists a shift-invariant nearest neighbor interaction $\phi$ on $X_r$ with $M = M_\phi$ and an equilibrium measure $\mu$ for $f_\phi$ which is non-frozen. By the Lanford-Ruelle Theorem such $\mu$ is a Gibbs state for $\phi$ and by Proposition \ref{prop:MRF_X_r_full_or_frozen} it is fully supported.
\end{proof}

\section{``Strongly'' non-Gibbsian shift-invariant Markov random fields}
\label{section: nonGibb}

In this section we describe some  shift-invariant Markov random fields whose specification is not given by any shift-invariant finite range interaction. Our example also proves that generally the specification of a shift-invariant Markov random field cannot be ``given by a finite number of parameters''. The construction is somewhat similar to the checkerboard island system as introduced in \cite{quas2003entropy}.

Let $\A$ be the alphabet consisting of the $18$ `tiles' illustrated in Figure \ref{figure: alphabet of shift space}: A blank tile, a ``seed'' tile (marked with a ``$0$''), $8$ ``interior arrow tiles''( $4$ of them have arrows in the coordinate directions and the rest are ``corner tiles'') and finally 8 additional ``border arrow tiles''( marked by an extra symbol ``B").
\begin{figure}[H]
\centering
\includegraphics[angle=0,
width=.5\textwidth]{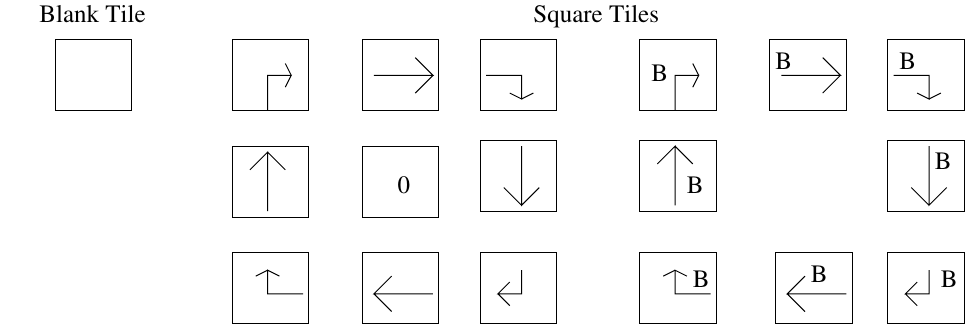}
\caption{The Alphabet $\A$}\label{figure: alphabet of shift space}
\end{figure}

All tiles other than the blank tile will be called \emph{square tiles} and the tiles with arrows will be called \emph{arrow tiles}. The arrow tiles with `B' will be called \emph{border tiles} and those without `B' will be called \emph{interior tiles}. Configurations of an $(2n+1) \times (2n+1)$ square shape
whose inner boundary consists of border tiles as illustrated by the example in Figure \ref{figure:square} will be called an \emph{$n$-square-island}. A square-island refers to an $n$-square-island for some $n$. The border tiles form the four \emph{sides} of the square-island which surround the square formed by the interior tiles.
\begin{figure}[H]
\centering
\includegraphics[angle=0,
width=.2\textwidth]{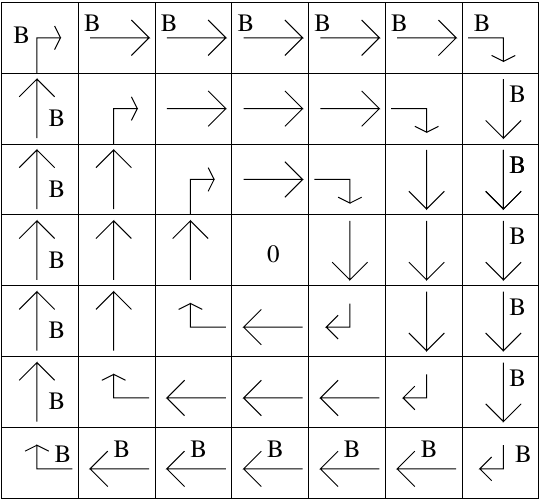}
\caption{A $3$-square-island}\label{figure:square}
\end{figure}
\noindent Informally, the idea is to have the square tiles form square-islands with the seed tile in the center and border tiles on their boundary ``floating in the sea of the blank tiles''. A `generic' configuration can be seen in Figure \ref{figure:generic configuration}.
\begin{figure}[H]
\centering
\includegraphics[angle=0,
width=.4\textwidth]{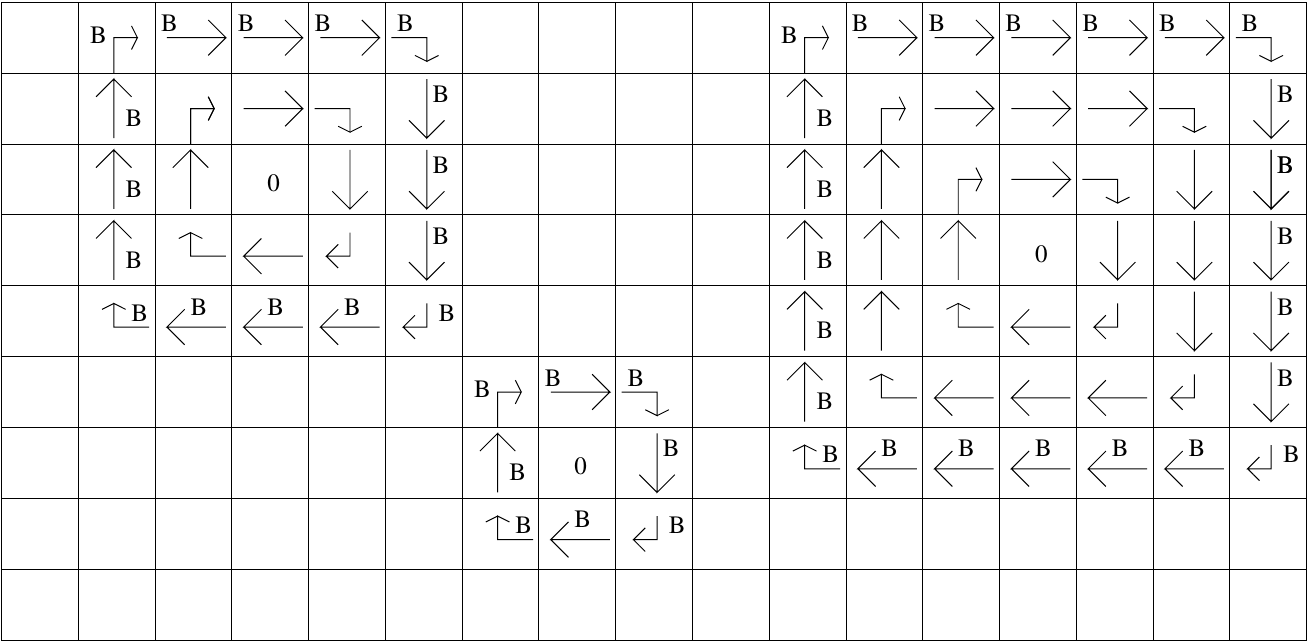}
\caption{A `Generic' Configuration}\label{figure:generic configuration}
\end{figure}

Let $X$ be the nearest neighbor shift of finite type on the alphabet $\A$ with constraints:
\begin{enumerate}
\item Any ``arrow head'' must meet an ``arrow tail'' of matching  type (interior or border) and vice versa.\label{item: arrows match}
\item Adjacent arrow tiles should not point in opposite directions.\label{item: adjacent arrows don't contradict}
\item Two corner direction tiles cannot be adjacent to one another.\label{item: corner tiles cannot sit next to each other}
\item The seed tile is only allowed to sit adjacent to straight arrow
tiles. \label{item: seed tiles sit beside arrows}
\item An interior tile is always surrounded by other square tiles while a border tile
has an interior tile on its right and the blank tile on its left (left and right here are taken from the point of view of the arrow).
\label{item: two adjacent squares not possible}
\end{enumerate}

Notice that the arrow tiles can turn only in the clock-wise direction. By Constraint (\ref{item: arrows match}), every arrow must either be a part of a bi-infinite path or a closed path. Any such path must either trace a straight line (vertical or horizontal) or an ``L shape'' or a ``U shape'', or a closed path which traces a rectangle.
By Constraint (\ref{item: two adjacent squares not possible}) we find that tiles forming a rectangular path must have square tiles to their right (in the interior of the rectangle). These are confined to the interior of the rectangle and thus must themselves trace a smaller rectangle (smaller in terms of the area confined by the rectangle).
Note that in constraint (\ref{item: two adjacent squares not possible}) we mean in particular that
corner direction border tiles  have blank tiles on the two sites on their left (where left is taken from the point of view of both the initial direction and the final direction of the arrow).
 By Constraints (\ref{item: corner tiles cannot sit next to each other}) and (\ref{item: adjacent arrows don't contradict}) we can conclude by induction on the length that closed paths must actually trace squares with a seed in the center. It follows that the finite connected components of the square tiles are square-islands. The length of these square-islands is $2N+1$ for some $N\in \N$ because of Constraints (\ref{item: corner tiles cannot sit next to each other}) and (\ref{item: seed tiles sit beside arrows}). Two such square-islands cannot be adjacent because of Constraint (\ref{item: two adjacent squares not possible}).

We can also exclude the possibility of a ``U shaped'' path using Constraints (\ref{item: corner tiles cannot sit next to each other}) and (\ref{item: adjacent arrows don't contradict}), again by induction on the length of the ``base of the U''.

For $r\in \N$ denote by
$B_r\subset \Z^2$, the $l^{\infty}$-ball of radius $r$ in $\Z^2$, that is,
$$B_r:=\{(i,j)\in \Z^2~:~ |i|, |j| \leq r\}.$$

Because a $\partial B_r$-configuration of blank tiles can be filled either with a square-island or with blank tiles, it follows that $X$ has positive entropy. 

\begin{prop}\label{prop:island_mme}
Let $\mu$ be any measure of maximal entropy for $X$. Then $\mu$ is fully-supported.
\end{prop}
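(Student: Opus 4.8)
The plan is to deduce the statement from the \emph{entropy minimality} of $X$, that is, from the assertion that every proper subshift $Y \subsetneq X$ satisfies $h(Y) < h(X)$. Granting this, the proposition is immediate: if $\mu$ is a measure of maximal entropy for $X$ then $Y := supp(\mu)$ is a subshift, and by the variational principle applied to $Y$ we get $h(X) = h_\mu \le h(Y) \le h(X)$, hence $h(Y) = h(X)$ and therefore $Y = X$; equivalently $\mu([c]_{B_r}) > 0$ for every $r \in \N$ and every $c \in \mathcal{B}_{B_r}(X)$, which is exactly the claim. So the whole content is to prove that $X$ is entropy minimal, and since every proper subshift of $X$ omits some finite pattern $w$ that does occur in $X$ and is contained in the subshift $X_w$ of configurations in which no translate of $w$ occurs, it suffices to prove $h(X_w) < h(X)$ for every such $w$.

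The first ingredient is a \emph{completion lemma}. By the structural discussion preceding the proposition, the square-tile connected components of any element of $X$ are finite square-islands, so a given occurrence of $w$ meets only finitely many islands. Enclosing those islands together with this occurrence of $w$ inside a large box $B_R$ and declaring every remaining cell of $B_R$ to be the blank tile produces a pattern $u \in \mathcal{B}_{B_R}(X)$ which contains an occurrence of $w$ well inside $B_{R-1}$ and which agrees with the all-blank pattern on the annulus $B_R \setminus B_{R-1}$. Since $X$ is a nearest-neighbor shift of finite type and $u$ is blank along that annulus, $u$ can be \emph{implanted} into any $x \in X$ on any translate $v + B_R \subset \Z^2$ whose boundary annulus $v + (B_R \setminus B_{R-1})$ is already blank in $x$: the modified configuration is again in $X$, and the new occurrence of $w$ it contains is confined to $v + B_{R-1}$.

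The second ingredient is an injective counting argument comparing $\mathcal{B}_{B_N}(X)$ with $\mathcal{B}_{B_N}(X_w)$. Given $p \in \mathcal{B}_{B_N}(X_w)$, one selects a family of $\ge c N^2$ pairwise far-apart translates $v_i + B_R \subset B_N$ at which $p$ may be edited so as to plant a copy of $u$. A site $v_i + B_R$ need not already be blank in $p$: using that every island of a configuration of $X$ is surrounded by blank tiles and may therefore be erased (replaced by blanks) while remaining in $X$, one first erases the islands meeting $v_i + B_R$ — there are few of them, and all are small, provided $v_i + B_R$ avoids the (necessarily sparse) large islands — thereby creating an all-blank box, and then implants $u$. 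Performing this over an arbitrary subset of the chosen sites yields $2^{c N^2}$ distinct patterns $q \in \mathcal{B}_{B_N}(X)$, each containing $w$; and from $q$ one recovers both $p$ and the chosen subset of sites, since the relevant $B_R$-boxes are exactly those now carrying an occurrence of $w$ (which does not occur in $p$, is confined to $v_i + B_{R-1}$, and cannot appear along the blank seams). This gives an injection of $\mathcal{B}_{B_N}(X_w) \times \{\text{subsets of a } cN^2\text{-element set}\}$ into $\mathcal{B}_{B_N}(X)$, hence $|\mathcal{B}_{B_N}(X)| \ge 2^{c N^2}\,|\mathcal{B}_{B_N}(X_w)|$, and letting $N \to \infty$ yields $h(X_w) \le h(X) - c\log 2 < h(X)$.

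I expect the main obstacle to be the bookkeeping of the second step, specifically guaranteeing $\ge cN^2$ usable implantation sites for \emph{every} $p \in \mathcal{B}_{B_N}(X_w)$. A configuration may well contain no large all-blank box (a periodic grid of unit square-islands is such an example), so one genuinely needs the ``erase small islands, then implant'' maneuver rather than a direct implant. The delicate point is to show that the translates of $B_R$ that are blocked by large islands occupy only a bounded fraction of $B_N$ — for which one uses that large islands have large area and are therefore sparse, together with the fact that the square-tile density cannot be too close to $1$ when many islands are present, because the constraints force a blank separating layer around each island — and to split off the (rigid, and hence entropically negligible) patterns that are dominated by a few enormous islands and handle them by a separate crude count. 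Checking that edits at far-apart sites do not interfere, and that the choice of sites can be made canonically so that the decoding is well defined, is then routine.
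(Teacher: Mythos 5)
Your plan reformulates the proposition as entropy minimality of $X$ --- i.e.\ $h(X_w) < h(X)$ for every globally admissible pattern $w$, where $X_w$ is the sub-subshift of $X$ avoiding $w$ --- which is indeed logically equivalent to the statement (any proper subshift of $X$ omits some $w \in \mathcal{B}(X)$, and conversely, a measure of maximal entropy for a subshift $Y \subsetneq X$ with $h(Y)=h(X)$ would be a non-fully-supported MME for $X$). This is a genuinely different route from the paper's, which invokes Lanford--Ruelle to see that any MME is a Gibbs state for the uniform nearest-neighbor specification, argues via Poincar\'e recurrence (for L-shapes) and positive entropy (against periodicity) that $\mu$-a.s.\ every square tile lies in a finite square-island so that a $\mu$-typical point is flanked by annuli of blanks at all scales, and then transfers positive probability directly through the uniform specification. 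However, your combinatorial implementation has two concrete gaps.

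First, the decoding fails because erasure destroys information. You edit $p$ by erasing the islands meeting $v_i+B_R$ and then implanting $u$, do this over an arbitrary subset $S$ of the chosen sites, and claim $q$ determines $(p,S)$. It determines $S$, but not $p$: two patterns $p_1,p_2 \in \mathcal{B}_{B_N}(X_w)$ that differ only inside an island erased at some $v_i \in S$ produce the same $q$. Nor can you escape erasure by implanting only at already-blank boxes, since there need not be any: a periodic packing of $1$-square-islands (each a $3\times 3$ block of square tiles, neighbouring islands separated by a single blank layer) is a valid point of $X$ containing no all-blank sub-box of side $\ge 2$. Second, the erasure maneuver is simply unavailable for the \emph{infinite} connected components of square tiles that $X$ admits --- bi-infinite straight arrow lines, L-shaped paths, and infinitely nested square paths --- and these do occur in points of $X$ and hence in patterns of $\mathcal{B}_{B_N}(X_w)$. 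A segment of such a structure crossing $B_N$ is not a square-island and cannot be replaced by blanks without producing an unmatched arrow head (Constraint (1)) or an interior tile adjacent to a blank (Constraint (5)). Your sketch only contemplates small versus large \emph{islands}; isolating and separately bounding the patterns dominated by these rigid infinite structures is where the actual difficulty of the counting approach lies, and it is exactly what the paper's Gibbs-specification argument sidesteps.
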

\begin{proof}
We will begin by proving that $\mu$-almost surely any square tile is part of a square-island.
Since a seed must be surrounded by arrow tiles, it suffices to prove that any arrow tile is part of a square-island. By the discussion above, any arrow tile is part of a path which is bi-infinite or traces a square. An infinite path is either ``L shaped'' or a straight vertical or horizontal line. It is easily verified that the appearance of an ``L shape'' in the origin is a transient event with respect to the horizontal or vertical shifts, so by Poincar\'{e} recurrence the probability of having a ``L shaped path'' is zero. An infinite horizontal line forces a half space of horizontal straight line. This forces either a transient event or periodicity. Because $X$ has positive entropy, for a measure of maximal entropy for $X$, the measure of periodic points is 0 as well. Thus $\mu$-almost surely, any arrow tile is part of a
path which traces a square. By Constraint (\ref{item: two adjacent squares not possible}) either the square tile is contained in a square-island or there is an infinite sequence of nested square paths. The latter is again a transient event.

Suppose $x\in X$  does not have any infinite connected component composed of square tiles. We will now show for all $r\in \N$ that there exists a finite set $A_r$ such that $B_r\subset A_r \subset \Z^2$ and $x_i$ is the blank tile for all $i \in \partial A_r$.
Let $Sq_1\in \A^{C_1}, Sq_2\in \A^{C_2}, \ldots, Sq_k\in \A^{C_k} $ be an enumeration of the square-islands in $x$ such that $C_{i}\cap B_{r+1}\neq \emptyset$ . Let
\begin{equation*}
A_{r}:= \bigcup_{i=1}^k C_i \cup B_{r}.
\end{equation*}
Since every square-island is surrounded by the blank tile, $A_r$ has the required properties.

Consider some $y \in X$ and $n \in \N$. We will prove that $\mu([y]_{B_n})>0$. Any incomplete square-island in $y|_{B_r}$ can be completed (possibly in multiple ways) in $B_{4r}$. By completing these square-islands we can obtain $z\in X$ such that it satisfies
\begin{equation*}
z_i:=\begin{cases}y_i&\text{for }i \in B_r\\\text{blank tile}&\text{for } i \in B_{4r}^c. \end{cases}
\end{equation*}

Now choose any $x \in \mathit{supp}(\mu)$ which does not have any infinite connected component composed of square tiles. As previously discussed we can find $A_{4r} \subset \mathbb{Z}^2$ such that $B_{4r} \subset A_{4r}$ and $x_i$ is the blank tile for all $i \in \partial A_{4r}$.
Then $z|_{\partial A_{4r}}=x|_{\partial A_{4r}}$. By the Lanford-Ruelle Theorem $\mu$ is a Markov random field with the uniform specification adapted to $X$. Therefore
\begin{equation*}
\frac{\mu([z]_{A_{4r}\cup \partial A_{4r}})}{\mu([x]_{A_{4r}\cup\partial A_{4r}})}=1
\end{equation*}
proving $\mu([z]_{B_r})= \mu([y]_{B_r}])>0$.

\end{proof}

The following small technical lemma will be of use to us:
\begin{lemma}[Recovering a square-island]\label{lem:recover_square_island}
Let  $F_0\ \subset \Z^2$ be a finite set which is connected in the Cayley graph of $\Z^2$.
Furthermore suppose that there exist $(m_1,n_1)(m_2,n_2),(m_3,n_3) \in F_0$ such that $n_2-n_1=2i \in 2\mathbb{N}$.
Suppose $x \in X$ satisfies the following properties:
\begin{itemize}
\item[(A)] $x_v$ is a square tile for all $v \in F_0$.
\item[(B)] $x_{(m_1,n_1)}$ and $x_{(m_2,n_2)}$ are both horizontal border tiles or corner border tiles.
\item[(C)] $x_{(m_3,n_3)}$ is a vertical or corner border tile.
\end{itemize}
Then there exists $(m_4,n_4) \in \Z^2$, which depends only on $x|_{F_0}$  such that $x|_{B_i +(m_4,n_4)}$ is an  $i$-square-island.
\end{lemma}
\begin{proof} The proof is a careful but fairly straightforward inspection of the rules defining $X$, which we now sketch.
Because $F_0 \subset \Z^2$  is  connected in the Cayley graph of $\Z^2$,   it follows that
$x|_{F_0}$ is a connected pattern of square tiles thus it is a  subpattern  of a square-island $x|_S$. The fact that  $x_{(m_1,n_1)}$ and $x_{(m_2,n_2)}$
are both horizontal or corner  border tiles implies that $S$ is a finite square and also determines
the vertical position of the upper and lower sides  of $S$. Thus  $S$  must be a $(2i+1)\times(2i+1)$-square and  the vertical coordinate of its center is $n_4 = \frac{1}{2}(n_1+n_2)$.
The fact that $x_{(m_3,n_3)}$ is a vertical or corner  border tile determines the horizontal coordinate of one of the vertical sides of $S$.
This determines the horizontal coordinate $m_4$ of the center of $S$ to be either $m_3 +i$ or $m_3  -i$ according to the direction of the arrow in $x_{(m_3,n_3)}$ .
\end{proof}

We now describe a subshift of finite type $Y$ for which the space of the Markov cocycles is infinite dimensional.
$Y$ is a nearest neighbor shift of finite type with the alphabet as in Figure \ref{figure: alphabet of shift space}
but now with two types of square tiles, type 1 and 2.
The adjacency rules are as in the subshift $X,$ but also force adjacent square tiles to be of the same type, that is,
any square-island in an element of $Y$ will consist entirely of tiles of type 1 or of type 2.
We denote by $\phi: Y \to X$ the map which forgets the types of square tiles.

Define  functions $\psi_1,\psi_2:Y \to \Z_+$   by
\begin{equation}
\psi_j(y) =\begin{cases}
i &  y|_{B_i} \mbox{ is  an } i\mbox{-square-island of type } j\\
0 & \mbox{ otherwise}
\end{cases}
\mbox{ for }  j=1,2.
\end{equation}

Given  a function $p:\N \to (0,1)$ define a shift-invariant measure $\mu_p$ on $Y$ as follows:
Pick  $x\in X$ according to a fixed measure of maximal entropy $\mu$ and then choose the type of square-islands in $x$ independently,
where an $i$-square-island is of type $1$ with probability $ p(i)$.

Precisely, the measure $\mu_p$ is the unique shift-invariant Borel probability measure on $Y$ satisfying $\mu = \mu_p \circ \phi^{-1}$,
$$ \mu_p \left( \psi_1(\sigma_{v}(y))=i ~|~ \psi_1(\sigma_v(y)) +
\psi_2(\sigma_v(y))= i\right) = p(i) \text{ for } i \ge 1$$
and $(\psi_1(\sigma_v(y)))_{v \in \Z^2}$ are jointly independent
conditioned on
$(\psi_1(\sigma_v(y))+\psi_2(\sigma_v(y)))_{v \in \Z^2}$.

Let $g_p:Y \to \mathbb{R}$ be given by
\begin{equation}
g_p(y) =  \begin{cases}
\log p(\psi_1(y)) & \psi_1(y)>0\\
\log [1-p(\psi_2(y))] & \psi_2(y)>0\\
0 & \mbox{otherwise},
\end{cases}
\end{equation}
and
$M_p:\Delta_Y \to \mathbb{R}$ be given by:

\begin{equation}\label{eq:_M_p_def}
 M_\bold p(y, y')= \sum _{v \in \Z^2} \left[g_p(\sigma_v(y')) - g_p(\sigma_v(y))\right]
\end{equation}

It is clear that $M_p$ is a well-defined shift-invariant $\Delta_Y$-cocycle.


Moreover, if $y \in Y$ and $\Lambda, \Gamma \subset \Z^2$ are finite sets so that $\Lambda \cap \partial \Lambda \subset \Gamma$ and there are no square-islands in $y$ intersecting both $\Lambda$ and $\Z^2\setminus \Lambda$ then 
\begin{eqnarray}\label{equation: breaking conditionalmeasure into colours}
\mu_p\left([y]_{\Lambda}\big\vert [y]_{\Gamma\setminus\Lambda}\right)
&=&\mu\left([\phi(y)]_{\Lambda}\big\vert [\phi(y)]_{\partial\Lambda}\right)\prod_{v \in \Lambda}e^{g_p(\sigma_v(y))}.
\end{eqnarray}


Suppose $(y, y')\in \Delta_Y$ and
$$F=\{v \in \Z^2 ~:~ y_v\neq y'_v\}.$$

Note that if $y|_C$ is an infinite connected component consisting of square tiles then $y|_C= y'|_C$, that is, $C\cap F = \emptyset$. As in the proof of Proposition \ref{prop:island_mme}, there exists  a finite set $\Lambda\subset \Z^2$ such that $F \subset \Lambda$ and $y_i=y'_i$ is a blank tile for all $i \in \partial \Lambda$. Since $\mu$ is a Markov random field with uniform specification, \eqref{equation: breaking conditionalmeasure into colours} implies for all finite sets $\Gamma\supset \Lambda\cup \partial \Lambda$ that

\begin{equation}\label{equation: cocycle in terms of mini}
\frac{\mu_ p\left([y']_{\Gamma}\right)}{\mu_p\left([y]_{\Gamma}\right)}=  \prod_{v \in \Lambda}e^{g_p(\sigma_v(y'))-g_p(\sigma_v(y))}.
\end{equation}

It follows from \eqref{equation: cocycle in terms of mini} that

\begin{equation}\label{eq:M_p_ratio}
\frac{\mu_{p}([y']_\Gamma)}{\mu_{p}([y]_\Gamma)}= e^{M_{p}(y,y')}
\end{equation}
whenever $(y,y^{\p})\in \Delta_Y$ and $\Gamma$ satisfies the assumptions above. It follows that $M_p$ is the logarithm of the Radon-Nikodym cocycle for $\mu_p$.

\begin{prop}\label{prop:mu_p_MRF}
For any $p: \N \to  (0,1)$, the measure $\mu_p$ defined above is a shift-invariant Markov random field with Radon-Nikodym cocycle $M_p$.
\end{prop}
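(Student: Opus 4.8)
The plan is to verify that $\mu_{\bold p}$ is indeed a well-defined shift-invariant probability measure on $Y$, that it is a Markov random field, and that its Radon-Nikodym cocycle for $\Delta_Y$ equals $e^{M_{\bold p}}$. The last point is essentially already in hand: equation \eqref{eq:M_p_ratio} says precisely that $\frac{\mu_{\bold p}([y']_\Gamma)}{\mu_{\bold p}([y]_\Gamma)}= e^{M_{\bold p}(y,y')}$ whenever $(y,y') \in \Delta_Y$ and $\Gamma \supset \Lambda \cup \partial \Lambda$ for a suitable finite $\Lambda$ containing the difference set $F$, so once the measure is shown to be well-defined, this identity immediately gives both adaptedness (the orbit of a point in the support stays in the support) and the stated form of the cocycle.

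First I would make sense of the conditional-probability prescription defining $\mu_{\bold p}$. The measure on the $\sigma$-algebra $\F = \phi^{-1}(\mathit{Borel}(X))$ is declared to be $\phi^{-1}\mu$; conditionally on $\F$, the island types are chosen independently, an $i$-island being type $1$ with probability $p_i$. For a fixed $x \in X$ with no infinite square-component (a $\mu$-full-measure event, by the argument in the previous proposition, since $\mu$ is a measure of maximal entropy and hence fully supported), the fiber $\phi^{-1}(x)$ is naturally a product space indexed by the finite square-islands of $x$, each factor being $\{1,2\}$, and the conditional law is the product Bernoulli$(p_i)$ measure over that index set. Kolmogorov extension / standard product-measure construction on each fiber, together with the disintegration over $(X,\mu)$, produces a genuine Borel probability measure $\mu_{\bold p}$ on $Y$; one checks the prescribed formula $\mu_{\bold p}([y]_\Lambda \mid \F)(y)=\prod_i p_i^{m^i_\Lambda(y)}(1-p_i)^{n^i_\Lambda(y)}$ holds by construction, the infinite product being in fact a finite product for each fixed finite $\Lambda$ since only finitely many islands meet $\Lambda$. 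Shift-invariance of $\mu_{\bold p}$ follows from shift-invariance of $\mu$ together with the shift-equivariance of the island structure and of the counting functions $m^i_\Lambda, n^i_\Lambda$.

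Next I would establish the Markov random field property. Fix finite sets $A, B$ with $\partial A \subset B \subset A^c$; I must show $\mu_{\bold p}([a]_A \mid [b]_B) = \mu_{\bold p}([a]_A \mid [b|_{\partial A}]_{\partial A})$ for admissible patterns. The key is equation \eqref{equation: breaking conditionalmeasure into colours}: for $\Gamma \supset \Lambda \cup \partial\Lambda$, $\mu_{\bold p}([y]_\Lambda \mid [y]_{\Gamma \setminus \Lambda}) = \mu([\phi(y)]_\Lambda \mid [\phi(y)]_{\partial\Lambda}) \prod_i p_i^{m^i_\Lambda(y)}(1-p_i)^{n^i_\Lambda(y)}$, which already exhibits the conditional probability as depending only on $y|_{\Lambda \cup \partial \Lambda}$ — because $\mu$ has the uniform (Markov) specification on the SFT $X$ and because the island-type factor is determined by which islands meet $\Lambda$ and their types, data visible from $y|_{\Lambda \cup \partial \Lambda}$ once one notes that an island meeting $\Lambda$ and extending outside $\Lambda \cup \partial\Lambda$ is not "closed off", but its type is still read off from any tile of it inside $\Lambda \cup \partial \Lambda$. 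The technical subtlety here — an island may straddle $\Lambda$ and $\Lambda^c$, so the factor $\prod_i p_i^{m^i_\Lambda} (1-p_i)^{n^i_\Lambda}$ is not literally a function of $y|_\Lambda$ alone — is exactly what Lemma \ref{lemma: squares decided by paths} is designed to resolve: it shows that the restriction of $y$ to the difference set, together with the boundary data, pins down the relevant islands. I would invoke Lemma \ref{lemma: squares decided by paths} to argue that for $(y,y')$ differing on $F \subset \Lambda$ with blank boundary on $\partial\Lambda$, the counts $m^i_F, n^i_F$ are well-defined and match $m^i_\Lambda, n^i_\Lambda$ up to the common islands, which is the content of the second equality in \eqref{equation: cocycle in terms of mini}.

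**The main obstacle** I anticipate is bookkeeping around islands that cross the boundary of $\Lambda$: one must be careful that the conditional-independence decomposition into a "shape" part (governed by $\mu$ on $X$) and a "color" part (governed by $\bold p$) is genuinely a Markov decomposition, i.e., that conditioning on the boundary ring $\partial A$ already determines which straddling islands are present and of which types. This is where Lemma \ref{lemma: squares decided by paths} and Constraint (\ref{item: two adjacent squares not possible}) (blank tiles surrounding every island) do the real work, since a border tile visible in the conditioning region, plus the geometry of square-islands, forces the whole island. Once that is in place, the verification that $\mu_{\bold p}([a]_A \mid [b]_B)$ depends only on $b|_{\partial A}$ is a direct computation using \eqref{equation: breaking conditionalmeasure into colours}, and the identification of the Radon-Nikodym cocycle with $M_{\bold p}$ follows from \eqref{eq:M_p_ratio} exactly as recorded above. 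The remaining routine checks — that $M_{\bold p}$ as defined really is a $\Delta_Y$-cocycle (additivity under concatenation of homoclinic pairs) and shift-invariant — are immediate from additivity of the counting functions $m^i_F, n^i_F$ in $F$ and their shift-equivariance, and I would dispatch them in a sentence.
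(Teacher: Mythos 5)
Your proposal is correct and rests on the same technical engine as the paper, but the two arguments are packaged differently. The paper's proof is very short: equation \eqref{eq:M_p_ratio} already identifies $e^{M_{\bold p}}$ as the Radon--Nikodym cocycle for the homoclinic relation, so by the equivalence established in Section \ref{section: Markov _spec_and_cocyc} (a shift-invariant measure whose Radon--Nikodym cocycle is a shift-invariant \emph{Markov} cocycle is a shift-invariant Markov random field), the only thing left to check is that $M_{\bold p}$ satisfies the Markov cocycle property. That property is then a one-line consequence of Lemma \ref{lemma: squares decided by paths}: for $(y,y')$ and $(z,z')$ as in the Markov-cocycle definition, the lemma yields $m^i_F(y)=m^i_F(z)$ and $m^i_F(y')=m^i_F(z')$ (and the same for $n^i_F$), hence $M_{\bold p}(y,y')=M_{\bold p}(z,z')$.

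You instead verify the Markov random field conditional-independence property directly at the level of specifications, via \eqref{equation: breaking conditionalmeasure into colours}. That route does work, and you correctly identify Lemma \ref{lemma: squares decided by paths} as the place where the bookkeeping about islands straddling $\partial\Lambda$ gets resolved. But it carries an extra subtlety you gloss over: equation \eqref{equation: breaking conditionalmeasure into colours} is only stated under the assumption that no square-island meets both $\Lambda$ and $\Lambda^c$ (blank boundary), so to handle arbitrary $A,B$ with $\partial A\subset B\subset A^c$ as in the MRF definition you would need to enlarge to a suitable $\Lambda$ with blank boundary and then argue that the enlargement does not change the conditional probability of interest. The cocycle reformulation neatly sidesteps this, which is why the paper's proof is shorter. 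You also devote space to well-definedness and shift-invariance of $\mu_{\bold p}$, which the paper treats as part of the construction preceding the proposition rather than the proposition itself; including it is not wrong, just more than is asked. In short: same key lemma, but the paper reduces to a cocycle statement and you go through the specification, which costs you an extra reduction step you should spell out.
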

\begin{proof}
By the discussion above and \eqref{eq:M_p_ratio} it remains to check that $M_p$ is a  Markov cocycle.
This is a direct consequence of  Lemma \ref{lemma: squares decided by paths} below and the definition of $M_p$ in \eqref{eq:_M_p_def}.
\end{proof}

\begin{lemma}\label{lemma: squares decided by paths}
Let $(y,y'), (z, z')\in \Delta_Y$ and $F \subset \Z^2$ be a finite set so that
\begin{eqnarray*}
y|_{F\cup \partial F}=z|_{F\cup \partial F}&\text{ and }& y'|_{F\cup \partial F}= z'|_{F\cup \partial F},\\
y|_{F^c}= y'|_{F^c}&\text{ and }& z|_{F^c}= z'|_{F^c}.
\end{eqnarray*}

Then for $v \in \Z^2$  either
$$\psi_j(\sigma_v (y'))=\psi_j(\sigma_v (y)) \mbox{ and }  \psi_j(\sigma_v(z'))=  \psi_j(\sigma_v(z)) \text{ for }j=1,2$$
or
$$\psi_j(\sigma_v(y))=\psi_j(\sigma_v(z)) \mbox{ and } \psi_j(\sigma_v(y'))=\psi_j(\sigma_v(z')) \text{ for }j=1,2.$$
\end{lemma}
\begin{proof}
Suppose $\psi_j(\sigma_v (y')) \ne\psi_j(\sigma_v (y))$.
Then either $\psi_j(\sigma_v (y'))>0$ or $\psi_j(\sigma_v (y)) >0$. By symmetry considerations  we can  without loss of generality deal with the case that
$$\psi_1(\sigma_v (y')) \ne\psi_1(\sigma_v (y))= i >0,$$ so  $y|_{B_i+v}$ is an $i$-square-island of type $1$, and
 $y'|_{B_i+v}$ is not. We will prove that $\psi_1(\sigma_v(z))=i$ and $\psi_2(\sigma_v(z))=0$. The other implications can be obtained similarly. There are several cases to consider:

If $y'|_{B_i+v}$ is an $i$-square-island of type $2$ then $B_i+v \subset F$ so $z|_{B_i +v}$
is an $i$-square-island of type $1$  and so  $\psi_j(\sigma_v(y))=\psi_j(\sigma_v(z))$ for $j=1,2$.

Otherwise suppose that $y'|_{B_i+v}$ is not an $i$-square-island. A careful inspection of the possibilities  for
the pattern $y'|_{B_i+v}$ leads to the conclusion that there must exist a connected subset $F_0 \subset F$
and $(m_1,n_1), (m_2,n_2),(m_3,n_3) \in F_0$ such that $n_2-n_1=2i$ and  $\phi(z), \phi(y)$
satisfy the assumptions on  the point $x$  in the statement of Lemma \ref{lem:recover_square_island}.

We sketch some details:
First suppose there is a square  $S = (B_j +w )\subset \Z^2$  with $j >i$  so that $B_i +v \subset S$ and $y'|_S$ is an $j$-square-island.
Since $S \neq B_i +v$ are square shapes and $y|_{\partial((B_i+v)^c)}$ consists of border tiles, the set  $F$ completely contains at least one of the vertical sides of the square $B_i+v$. So in this case we can choose that side as $F_0$, choose $(m_1,n_2)$ and $(m_2,n_2)$ to be the locations of the corners for this side of $B_i+v$, and choose for instance $(m_3,n_3)=(m_1,n_1)$ .

Now suppose there is a square  $S = (B_j +w )\subset \Z^2$  with $j >i$  so that $S$ completely contains one of the sides of the square $B_i +v$ and $y'|_S$ is a $j$-square-island, but $B_i +v \not \subset S$.
Without loss of generality suppose $S$ contains the top side $T$ of the square $B_i+v$. Since $B_i+v\not\subset S$, $T$ does not intersect the top side of $S$ and thus $T\subset F$, and we can take $F_0=T$.

Otherwise, none of the sides of the square $B_i +v$ are contained in a square $S \subset \Z^2$ for which $y'|_S$ is a square-island. In this case we can choose
$$F_1 = \{ w \in B_i+v~:~ y'_w \mbox{ is a blank tile } \},$$
$$F_2 =  \left\{  w \in B_i+v ~:~  y'_w \mbox{ is a border tile and } y'_w\neq y_w\right\},$$
and $F_0 = F_1 \cup F_2$.

In any case it follows from Lemma \ref{lem:recover_square_island} that $z|_{B_i+v}$ is an $i$-square-island of type $1$, completing the proof.
\end{proof}

By Proposition \ref{prop:mu_p_MRF} above there is an  uncountable family of linearly independent shift-invariant Markov cocycles on $Y$  which have corresponding fully-supported shift-invariant Markov random fields. Since the space of Markov cocycles which come from some shift-invariant finite range interaction is a union of finite dimensional vector spaces, this further implies that there exists a shift-invariant Markov random field which is not Gibbs for any shift-invariant finite range interaction. Alternatively, note that for any Gibbs cocycle with some shift-invariant finite range interaction the magnitude of the cocycle at a particular homoclinic pair is at most linear in the size of set of sites at which the two configurations differ. We can choose $p:\N \to  (0,1)$ such that this does not happen.

A simple variation on the above construction yields topological Markov fields which are not sofic: Choose $f:\N \to  \{1,2\}$. For  each $f$ define a shift space $Y_f  \subset Y$, by the condition that each $i$-square island is of type $f(i)$. Each $Y_f$ is a topological Markov field so $\{Y_f\}_{ f:\N \to  \{1,2\}}$ is an uncountable family. Since the family of sofic subshifts is countable, typically $Y_f$  will not be sofic.

\section{Conclusion and further problems} In this paper we demonstrate the applicability of Markov cocycles to studying Markov random fields and Gibbs states with nearest neighbor interaction. In cases where the space of shift-invariant Markov cocycles is finite dimensional, these can act as a substitute for nearest neighbor interactions in providing a description of the specification ``using finitely many parameters''. We remark that in some recent work the formalism of Markov and Gibbs cocycles, the pivot property was used to generalise the strong version of the Hammersley-Clifford Theorem (Theorem \ref{thm: strong_hammersley_clifford}) beyond the safe symbol case in \cite{chandgotiahammcliff2014}.
We mention some questions which remain to be answered:

\subsection{Supports of Markov random fields} What characterizes the property of being the support of a shift-invariant Markov random field on the Cayley graph of $\mathbb{Z}^d$? We know it is equivalent to being a non-wandering nearest neighbor shift of finite type when $d= 1$. For finite graphs, it suffices to be a topological Markov field. In higher dimensions we wonder whether it is sufficient to have a topological Markov field with a shift-invariant probability measure supported on it. Also: Suppose a $\mathbb{Z}^d$ shift of finite type is the support of some shift-invariant Markov random field. Must it also be the support of a shift-invariant Gibbs measure for some nearest neighbor interaction?

\subsection{Algorithmic aspects}
Suppose we are given a nearest neighbor shift of finite type with the pivot property along with its globally allowed patterns on $\{0\}\cup \partial \{0\}$.
Is there an algorithm to determine the dimension of the shift-invariant Markov cocycles?
If so, is there a way to decide which of these cocycles have a corresponding fully supported shift-invariant probability measure on the subshift?
In case the subshift has a safe symbol, such an algorithm can be derived from the proof of the Hammersley-Clifford Theorem \cite{Preston} and also from Lemma 3.1 in \cite{dachiannahedescriptionofrandom}.

\subsection{Markov Random fields for other Models}
There are other models for which it would be interesting to get a good description of the Markov random Fields and Gibbs measures. One such family is the $r$-colorings of $\Z^d$ with $r \ge 2d+2$. Domino tilings of $\Z^2$ are another interesting model. For these examples we know the generalized pivot property holds, so the space of shift invariant Markov cocycles is finite dimensional. Our results were obtained specifically for the $3$-colored chessboards. Nevertheless, we believe some of the techniques we developed and applied in this paper can be useful in studying other systems.

\subsection{Changing the underlying graph}
In this paper we considered $X_r$ as a topological Markov field with respect to the standard Cayley graph of $\ZD$. One can also ask now how much does the underlying graph affect our results. For instance, what happens if we choose a different set of generators? For instance, adding generators corresponds to adding edges to the graph and allows for new nearest neighbour interactions.

Generalizing further, one can consider the analog of $X_r$ over a Cayley graph of some other finitely generated groups.
For instance when the underlying graph is a Cayley graph of $\Z$, we know that every shift-invariant Markov random field is Gibbs with a shift-invariant nearest neighbor interaction \cite{Markovfieldchain}, regardless of the topological Markov field.

\subsection{Mixing properties of subshifts and the dimension of the invariant Markov cocycles} In Section \ref{section: nonGibb},
we construct a subshift such that the dimension of the space of shift-invariant Markov cocycles is uncountable.
This subshift is topologically mixing but there are stronger  mixing properties which it does not exhibit (See \cite{boyle2010multidimensional} for a discussion of some mixing properties of $\ZD$-subshifts).
We wonder if there are natural mixing properties of a subshift which imply that the space of shift-invariant Markov cocycles is finite-dimensional.

\section*{Acknowledgements} We thank Prof.\!\! Brian Marcus, Prof.\!\! Ronnie Pavlov and Prof.\!\! Mike Boyle for many useful discussions. The first author is extremely grateful to Prof.\!\! Brian Marcus, his PhD advisor at the University of British Columbia under whose tutelage he has learnt all the ergodic theory that he currently knows. He will also like to thank the University of British Columbia for providing the necessary funding and opportunity to conduct this research. The second author would like to thank Prof.\!\! Brian Marcus, PIMS and the University of British Columbia for hosting him as a post-doctoral fellow during which period the problems in this paper were studied. We would like to thank the anonymous referees for a very careful reading and many helpful comments.

\bibliographystyle{abbrv}
\bibliography{MRF_3CB}
\end{document}